\newfont{\msbm}{msbm10 at 11pt}
\newcommand\Quote[1]{``#1"}
\newcommand{\arxiv}[1]{{\tt \href{http://arxiv.org/abs/#1}{arXiv:#1}}}
\newtheorem{Theo}{Theorem}[section]
\newtheorem{Lemma}[Theo]{Lemma}
\newtheorem{Cor}[Theo]{Corollary}
\newtheorem{Prop}[Theo]{Proposition}
\newtheorem{Exm}[Theo]{Example}
\newtheorem{Dfn}[Theo]{Definition}
\newtheorem{Rmk}[Theo]{Remark}
\newtheorem{Assum}{Assumption} 
\newtheorem{Conj}[Theo]{Conjecture}
\begin{document}

\title{Dirichlet eigenfunction and heat kernel estimates on annular domains}
\author{Brian Chao and Laurent Saloff-Coste \\ Cornell University
\\
\\
\textit{In honor of Professor Leonard Gross}}
\maketitle
\begin{abstract}
Motivated by Euclidean boxes, we consider  \Quote{thin} annular domains of the form $U=(a,b)\times U_0\subseteq \mathbb{R}^n$ in polar coordinates, where the spherical base $U_0\subseteq \mathbb{S}^{n-1}$ is an inner uniform domain. We show that, with respect to the measure $\varphi_U^2$ determined by the principal Dirichlet Laplacian eigenfunction $\varphi_U$, such annular domains satisfy volume doubling and Poincar\'{e} inequalities uniformly over all locations and scales. This implies sharp Dirichlet heat kernel estimates expressed in terms of $\varphi_U$. Our results hold uniformly over the collection of all annuli in $\mathbb{R}^n$. We also give matching two-sided bounds for the first Dirichlet Laplacian eigenfunction and eigenvalue for some annular domains including annuli in $\mathbb{R}^n$. Moreover, we prove eigenfunction inequalities for $\varphi_U$ under domain perturbations of $U$. The proofs of our main results utilize eigenfunction comparison techniques due to Lierl and the authors (\cite{lierllsc}, \cite{chaolsc}), small scale $\varphi_U^2$-Poincar\'{e} inequalities \cite{lierllsc}, as well as a discretization technique of Coulhon and Saloff-Coste \cite{coulsc}. Finally, our methods also imply uniform Neumann heat kernel estimates for thin annular domains.
 
\end{abstract}

{\small {{\it AMS 2020 subject classifications}:  Primary 35B20, 35B51, 35J05, 35K08, 35P15;
Secondary 35J25, 60J60, 60J65}



{{\it Key words and phrases}: Dirichlet eigenfunction, Dirichlet eigenvalue, heat kernel, annuli, inner uniform domain, volume doubling, Poincar\'{e} inequality, domain perturbation}}

\tableofcontents

\section{Introduction} 
\subsection*{Setting and goals}
Let $U\subseteq \mathbb{R}^n $ be a bounded domain, and let $-\Delta=-\sum_i \partial _i^2/\partial x_i^2$ be the Dirichlet Laplacian on $U$.
We denote by $\lambda(U)=\lambda_U>0$ the smallest positive eigenvalue of $-\Delta$, and denote by $\varphi_U>0$ the corresponding \textit{principal Dirichlet Laplacian eigenfunction} with eigenvalue $\lambda_U$. We assume that $\varphi_U$ is normalized so that $\|\varphi_U\|_{L^2(U)}=1$.

We let $d_U$ denote the geodesic distance on $U$, and we consider the metric measure space $(U,d_U,\varphi_U^2dx)$, where $dx$ is the Lebesgue measure on $U$. We write $B_U(x,r)$ to denote the metric balls of this space and write $V_{\varphi_U^2}(x,r)$ for the measure of such balls. On $(U,d_U,\varphi_U^2 dx)$, one can consider the heat kernel
$$\widetilde{p}_U(t,x,y):=\frac{e^{\lambda_U t}p^D_U(t,x,y)}{\varphi_U(x)\varphi_U(y)},$$ where $p^D_U(t,x,y)$ is the Dirichlet heat kernel on $U$. In probabilistic terms, the heat kernel $\widetilde{p}_U(t,x,y)$ determines a Brownian motion in $U$ with infinite lifetime, and with invariant measure $\varphi_U^2dx$. 
The motivation of this work is to identify families of \Quote{thin} Euclidean domains $U$ such that for $(U,d_U,\varphi_U^2 dx)$ and $\widetilde{p}_U(t,x,y)$, we can prove statements of the following type:
\begin{enumerate}
    \item $(U,d_U,\varphi_U^2 dx)$ is volume doubling (Definition \ref{vddefn}) and satisfies Poincar\'{e} inequalities (Definition \ref{pidefn}), uniformly over all locations and scales. The conjunction of these two properties then imply that $\widetilde{p}_U(t,x,y)$ satisfies two-sided Gaussian type heat kernel estimates of the form 
\begin{align}
        \forall t>0,x,y\in U,\enspace\frac{e^{\lambda_U t}p^D_{U}(t,x,y)}{\varphi_U(x)\varphi_U(y)}\asymp c_1 \frac{\exp(-d_U(x,y)^2/c_2t)}{\sqrt{V_{\varphi_U^2}(x,\sqrt{t})}\sqrt{V_{\varphi_U^2}(y,\sqrt{t})}}. \tag{$\varphi_U^2$-HKE}\label{HKE}
    \end{align}
    (Throughout this paper we write $A\lesssim B$ to denote $A\leq CB$ for some constant $C$ independent of $A$ and $B$, and we write $A\asymp B$ to denote $A\lesssim B\lesssim A$.)  
    \item There is an explicit expression $\Phi_U$, which we call the \textit{caricature function} of $\varphi_U$, such that $\varphi_U\asymp \Phi_U$.
    \item There are explicit matching upper and lower bounds for $\lambda(U)$.
    \item The eigenfunction $\varphi_U$ is stable under domain perturbations: if $V$ is a suitable larger domain containing $U$, then for any arbitrary domain $U'$ with $U\subseteq U'\subseteq V$, one has eigenfunction inequalities of the form $\varphi_U\lesssim \varphi_{U'}$ and  $\varphi_{U'}\lesssim\varphi_V$ whenever both sides are defined.
\end{enumerate}

Note that combining (\textup{\ref{HKE}}) with explicit estimates for $\varphi_U$ and $\lambda_U$, we obtain essentially explicit Dirichlet heat kernel estimates for $p^D_U(t,x,y)$.

\subsection*{Motivation: Euclidean boxes}
To motivate the above goals, we consider boxes in $\mathbb{R}^n$, which is one of the simplest examples for which Dirichlet Laplacian eigenfunctions and eigenvalues are known explicitly. By rotating and translating if necessary, we consider boxes that are symmetric with respect to each coordinate axis:
$$B=(-a_1,a_1)\times (-a_2,a_2)\times \cdots \times (-a_n,a_n),\hspace{0.1in}a_i\in (0,\infty).$$
The principal Dirichlet  eigenfunction and eigenvalue of $B$ are given by
\begin{align}
        \label{boxphilambda}\varphi_B(x_1,x_2,...,x_n)=\prod_{i=1}^{n}\frac{1}{\sqrt{a_i}}\cos\Big(\frac{\pi x_i}{2a_i}\Big),\hspace{0.1in} \lambda_B=\sum_{i=1}^{n}\Big(\frac{\pi}{2a_i}\Big)^2.
    \end{align}
    We show in Theorem \ref{boxHK} that $(B,d_B,\varphi_B^2 dx)$ satisfies volume doubling, Poincar\'{e} inequalities, and Dirichlet heat kernel estimates (\ref{HKE}), with all constants depending only on $n$. Moreover, regarding the behavior of $\varphi_B$ under domain perturbations, we have the following result.

\begin{Theo}
    \label{rectanglecomparison}
    Let $B_1=\prod_{i=1}^{n}(-a_i,a_i)$ and $B_2=\prod_{i=1}^{n}(-b_i,b_i)$ be two boxes in $\mathbb{R}^n$ with $B_1\subseteq B_2$. Let $U\subseteq \mathbb{R}^n$ denote an arbitrary domain with $B_1\subseteq U\subseteq B_2$, and, for constants $C_1>0,C_2> 1$, consider the following conditions:
    \begin{align}   
        \label{recthyp1}
        &\forall i\in \{1,2,...,n\},\hspace{0.1in}\frac{1}{a_i^2}-\frac{1}{b_i^2}\leq \frac{C_1}{\max _{i}b_i^2}
        \\ \label{recthyp2}  & \forall i\in \{1,2,...,n\},\hspace{0.1in} a_i\geq \frac{1}{C_2}b_i.
    \end{align}
    \begin{enumerate}
        \item Suppose (\ref{recthyp1}) holds. Then $\varphi_U(x)\lesssim \varphi_{B_2}(x)$ for all $x\in U$, where the implied constant depends only on $n$ and an upper bound on $C_1$.
        \item Fix $C_1>0$. For all $C_2>1$ sufficiently close to $1$ depending only on $C_1$ and $n$, if both (\ref{recthyp1}) and (\ref{recthyp2}) hold, then $\varphi_U(x)\gtrsim \varphi_{B_1}(x)$ for all $x\in B_1$, where the implied constant depends only on $n$ and $C_1$.
\end{enumerate}
\end{Theo}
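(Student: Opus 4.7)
My plan is to deduce both estimates from the explicit box formulas (\ref{boxphilambda}) and the eigenfunction comparison machinery of \cite{lierllsc,chaolsc}, whose analytic hypotheses on the ambient box are supplied by Theorem \ref{boxHK}. The algebraic starting point is the identity
$$\lambda_{B_1}-\lambda_{B_2}=\frac{\pi^2}{4}\sum_{i=1}^n\Big(\frac{1}{a_i^2}-\frac{1}{b_i^2}\Big)\le \frac{n\pi^2 C_1}{4\max_i b_i^2},$$
which follows directly from (\ref{recthyp1}) and (\ref{boxphilambda}). Combined with the monotonicity bracket $\lambda_{B_2}\le\lambda_U\le\lambda_{B_1}$, immediate from $B_1\subseteq U\subseteq B_2$, this yields
$$0\le \lambda_U-\lambda_{B_2}\le \frac{n\pi^2 C_1}{4\max_i b_i^2},\qquad 0\le \lambda_{B_1}-\lambda_U\le \frac{n\pi^2 C_1}{4\max_i b_i^2}.$$
Since $\max_i b_i$ is comparable to the diameter of $B_2$, these are precisely the scale-invariant smallness estimates required to invoke the Lierl--Saloff-Coste-type comparison lemmas.

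For part (1), I would apply the upper comparison lemma of \cite{lierllsc,chaolsc} to the inclusion $U\subseteq B_2$. Its hypotheses on the enveloping domain $B_2$---volume doubling and Poincar\'e inequality for $(B_2,d_{B_2},\varphi_{B_2}^2\,dx)$ together with the heat kernel estimate (\ref{HKE})---are furnished by Theorem \ref{boxHK} with constants depending only on $n$. The output is an inequality $\varphi_U\le C\varphi_{B_2}$ on $U$, with $C$ controlled in terms of $n$ and the dimensionless gap $(\lambda_U-\lambda_{B_2})(\max_i b_i)^2$; this gap is $\lesssim nC_1$ by the display above, giving the desired constant depending only on $n$ and $C_1$.

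For part (2), I would apply the lower comparison lemma to the inclusion $B_1\subseteq U$, this time with $B_1$ as the inner box and $U$ the larger domain. The analytic hypotheses are again supplied by Theorem \ref{boxHK} applied to $B_1$. The role of the additional hypothesis (\ref{recthyp2}) is geometric: it forces $U\subseteq B_2$ to lie in a bounded coordinate dilate of $B_1$, so that any point of $U$ can be reached from $B_1$ by a Harnack chain of length controlled by $n$ and $C_2$ in the intrinsic metric $d_U$. Under (\ref{recthyp2}) one also has $\max_i b_i\le C_2\max_i a_i$, so the gap estimate translates into $\lambda_{B_1}-\lambda_U\lesssim C_2^2 C_1/(\max_i a_i)^2$; requiring $C_2$ to be sufficiently close to $1$ (depending on $n,C_1$) ensures that the product of the Harnack-chain constant and the scaled gap stays within the quantitative regime of the comparison lemma. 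The conclusion $\varphi_U\ge c\,\varphi_{B_1}$ on $B_1$ with $c$ depending only on $n$ and $C_1$ then follows.

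The main technical obstacle is matching the precise form of the comparison lemmas of \cite{lierllsc,chaolsc} to this Euclidean box setting---identifying exactly which scale-invariant smallness of the eigenvalue gap they demand, and verifying that (\ref{recthyp1})--(\ref{recthyp2}) supply it uniformly in the box parameters $(a_i),(b_i)$. Once this reduction is made, the explicit formulas (\ref{boxphilambda}) and Theorem \ref{boxHK} take care of the rest.
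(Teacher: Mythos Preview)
Your outline for part (1) is essentially the paper's argument: heat-kernel monotonicity $p^D_U\le p^D_{B_2}$, an equilibrium estimate on $B_2$ at time $t\asymp(\max_i b_i)^2$, and the eigenvalue-gap bound you wrote down. The paper carries this out via an explicit box computation (an eigenfunction expansion giving $e^{\lambda_{B_2}t}p^D_{B_2}(t,x,x)/\varphi_{B_2}(x)^2\lesssim\prod_i(1+(b_i/\sqrt{t})^3)$) rather than invoking the general (\ref{HKE}) from Theorem \ref{boxHK}, but the effect is the same.

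For part (2) there is a genuine gap in your description. You attribute to (\ref{recthyp2}) the role of controlling the length of Harnack chains ``in the intrinsic metric $d_U$''. But $U$ is an \emph{arbitrary} domain with $B_1\subseteq U\subseteq B_2$; it need not be inner uniform, and $d_U$ can be wildly different from $d_{B_1}$ or $d_{B_2}$ (think of $U$ with long thin corridors). There is no Harnack chain argument available for $\varphi_U$ in $U$, and the comparison lemmas of \cite{chaolsc} do not use one. The actual mechanism---both in \cite{chaolsc} and in the paper's proof here---is the integral lower bound
\[
\frac{\varphi_U(x)}{\varphi_{B_1}(x)}\ \gtrsim\ \int_{B_1}\varphi_{B_1}(y)\,\varphi_U(y)\,dy,
\]
obtained from $e^{-\lambda_U t}\varphi_U=\int_U p^D_U\,\varphi_U\ge\int_{B_1}p^D_{B_1}\,\varphi_U$ and the equilibrium estimate on $B_1$. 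One then needs $\int_{B_1}\varphi_{B_1}\varphi_U$ bounded below. This is where (\ref{recthyp2}) enters, and purely as a \emph{volume} condition: the already-proven upper bound $\varphi_U\lesssim\varphi_{B_2}$ gives
\[
\int_{B_1}\varphi_U^2\ \ge\ 1-C\int_{B_2\setminus B_1}\varphi_{B_2}^2\ \ge\ 1-C'\Big(1-\tfrac{|B_1|}{|B_2|}\Big)\ \ge\ 1-C'\big(1-C_2^{-n}\big),
\]
which is positive once $C_2$ is close enough to $1$ depending on $n,C_1$. A further shrinking to $(1-\delta)B_1$ handles the factor $\inf_{(1-\delta)B_1}\varphi_{B_1}$. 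So the correct reading of (\ref{recthyp2}) is ``$|B_2\setminus B_1|/|B_2|$ small'', not ``short Harnack chains in $U$''. If you rewrite part (2) along these lines you recover the paper's proof.
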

Condition (\ref{recthyp1}) implies condition (\ref{recthyp2}) with $C_2=\sqrt{C_1+1}$. Hence both statements of Theorem \ref{rectanglecomparison} hold if we only assume  the condition (\ref{recthyp1}) for some sufficiently small $C_1>0$; in this case all implied constants depend only on $n$. Theorem \ref{rectanglecomparison} is proven in Section \ref{sectionbox}, and it implies the following corollary.
\begin{Cor}
    Consider the setting of Theorem \ref{rectanglecomparison}. Fix $C_1>0$. For all $C_2>1$ sufficiently close to $1$ depending only on $C_1$ and $n$, if both (\ref{recthyp1}) and (\ref{recthyp2}) hold, then
    $$\varphi_U(x)\asymp \varphi_{B_1}(x),$$
    for all $x=(x_1,x_2,...,x_n)$ with $|x_i|<a_i-(b_i-a_i)$ for all $i$. The implied constants depend only on $n$ and $C_1$.
\end{Cor}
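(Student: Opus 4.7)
The plan is to piece the corollary together directly from Theorem \ref{rectanglecomparison}. The lower bound $\varphi_U(x) \gtrsim \varphi_{B_1}(x)$ is precisely statement (2) of that theorem, since the region $R := \{x : |x_i| < 2a_i - b_i \text{ for all } i\}$ is contained in $B_1$. So the entire task reduces to establishing the matching upper bound $\varphi_U(x) \lesssim \varphi_{B_1}(x)$ on $R$. First I would invoke Theorem \ref{rectanglecomparison}(1) to get $\varphi_U \lesssim \varphi_{B_2}$ on all of $U$, reducing the problem further to the pointwise comparison $\varphi_{B_2}(x) \lesssim \varphi_{B_1}(x)$ on $R$.

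For that comparison I would use the explicit product formulas (\ref{boxphilambda}). The ratio is
$$\frac{\varphi_{B_2}(x)}{\varphi_{B_1}(x)} = \prod_{i=1}^{n} \sqrt{\frac{a_i}{b_i}}\cdot \frac{\cos(\pi x_i / 2b_i)}{\cos(\pi x_i / 2a_i)},$$
and since $a_i \le b_i$ each prefactor $\sqrt{a_i/b_i}$ is at most $1$. To bound the cosine ratios, I would rewrite them as ratios of sines of distances to the nearest face: $\cos(\pi x_i / 2a_i) = \sin(\pi(a_i - |x_i|)/2a_i)$ and similarly with $a_i$ replaced by $b_i$. Combining $\sin\theta \le \theta$ in the numerator with $\sin\theta \ge (2/\pi)\theta$ in the denominator on $[0,\pi/2]$ gives
$$\frac{\cos(\pi x_i / 2b_i)}{\cos(\pi x_i / 2a_i)} \le \frac{a_i}{b_i}\cdot \frac{b_i - |x_i|}{a_i - |x_i|}.$$
On $R$ the safety margin condition $a_i - |x_i| > b_i - a_i$ holds, hence $b_i - |x_i| = (a_i - |x_i|) + (b_i - a_i) < 2(a_i - |x_i|)$, so each cosine ratio is strictly less than $2$. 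Multiplying across $i$ yields $\varphi_{B_2}(x) \le 2^n \varphi_{B_1}(x)$ on $R$.

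Chaining $\varphi_U \lesssim \varphi_{B_2} \le 2^n \varphi_{B_1}$ on $R$ with Theorem \ref{rectanglecomparison}(2) produces $\varphi_U \asymp \varphi_{B_1}$ on $R$, with implied constants depending only on $n$ and $C_1$. A small housekeeping point is that $R$ is nonempty exactly when $2a_i > b_i$ for all $i$, which by (\ref{recthyp2}) holds whenever $C_2 < 2$; the hypothesis of the corollary already requires $C_2$ close enough to $1$ for Theorem \ref{rectanglecomparison}(2) to apply, hence in particular forces $C_2 < 2$. There is no real obstacle here — the corollary is essentially bookkeeping on top of Theorem \ref{rectanglecomparison}, combined with an elementary cosine estimate that exploits the extra margin $b_i - a_i$ built into the definition of $R$.
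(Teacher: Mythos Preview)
Your proposal is correct and follows the natural route: the paper states the corollary as an immediate consequence of Theorem \ref{rectanglecomparison} without writing out a proof, and your argument is exactly the expected unpacking --- combine part (2) for the lower bound, part (1) plus the explicit box eigenfunction formulas for the upper bound $\varphi_{B_2}\lesssim\varphi_{B_1}$ on the safety region $R$.

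One minor numerical slip: your sine comparison actually yields
\[
\frac{\cos(\pi x_i/2b_i)}{\cos(\pi x_i/2a_i)}\le \frac{\pi}{2}\cdot\frac{a_i}{b_i}\cdot\frac{b_i-|x_i|}{a_i-|x_i|},
\]
so each factor is bounded by $\pi$ rather than $2$, and the product bound is $\pi^n$ rather than $2^n$. This is harmless for the $\lesssim$ conclusion and does not affect the argument.
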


Since the eigenfunctions and eigenvalues can be computed explicitly for boxes, the proofs of our results for boxes are relatively straightforward. Inspired by this simple example, we aim to prove similar results for other \Quote{thin} Euclidean domains.

\subsection*{Thin annular domains: annuli in $\mathbb{R}^n$}

We now consider the case when $U\subseteq \mathbb{R}^n$ is an \textit{annular domain} of the form $(a,b)\times U_0\subseteq (0,\infty)\times \mathbb{S}^{n-1}$, where the base $U_0\subseteq \mathbb{S}^{n-1}$ is an \textit{inner uniform domain} (Definition \ref{iudefn}) of the sphere. We focus on the regime when $b/a>1$ is bounded above. Otherwise, the domain $U$ is often (but not always) inner uniform, in which case results of Lierl and Saloff-Coste \cite{lierllsc} already give volume doubling, Poincar\'{e} inequalities, and Dirichlet heat kernel estimates (\ref{HKE}).    

We illustrate our main results for an annulus $A_{a,b}:=(a,b)\times \mathbb{S}^{n-1}$ in $\mathbb{R}^n$, and refer the reader to Theorem \ref{sphericalHKE}, as well as Section \ref{examples}, for other examples of annular domains that satisfy analogous results. 

\begin{Theo}
    \label{illustration} Let $A_{a,b}:=\{x\in \mathbb{R}^n:a<|x|<b\}$ and let $\varphi_{A_{a,b}}$ denote the principal Dirichlet Laplacian eigenfunction of $A_{a,b}$, with eigenvalue $\lambda(A_{a,b})$, and with $\|\varphi_{A_{a,b}}\|_{L^2(A_{a,b})}=1$. 
    \begin{enumerate}
        \item[(a)] $A_{a,b}$ is $\varphi_{A_{a,b}}^2$-volume doubling and satisfies $\varphi_{A_{a,b}}^2$-Poincar\'{e} inequalities, with constants depending only on the dimension $n$.
        \item[(b)] $p^D_{A_{a,b}}(t,x,y)$ satisfies two-sided Dirichlet heat kernel bounds \textup{(\ref{HKE})}, with constants depending only on $n$.
        \item[(c)] Fix $n$. There are caricature functions for $\varphi_{A_{a,b}}$, i.e. there are explicit expressions (\ref{p1}), (\ref{p2}), (\ref{p3}) comparable to $\varphi_{A_{a,b}}$. There are also explicit matching upper and lower bounds (\ref{annulieigval1}) for $\lambda(A_{a,b})$, uniformly for all $0<a<b<\infty$.
        \item[(d)] Fix $l,k\geq 2$ and set $\varepsilon=b/a-1>0$. For all sufficiently small $\varepsilon>0$ depending only on $n$, we have
        \begin{align}
            \label{illu1}A_{a,b}\subseteq U\subseteq A_{a-(b-a)\varepsilon^l,b+(b-a)\varepsilon^k}\hspace{0.1in}\Rightarrow\hspace{0.1in}\varphi_{A_{a,b}}\lesssim \varphi_U\lesssim \varphi_{a-(b-a)\varepsilon^l,b+(b-a)\varepsilon^k},
        \end{align}
        for any arbitrary domain $U\subseteq \mathbb{R}^n$. Consequently, (\ref{illu1}) implies 
        $$\varphi_U(x)\asymp  \varphi_{A_{a,b}}(x),\hspace{0.1in}\forall x\in A_{a+(b-a)\varepsilon^l,b-(b-a)\varepsilon^k}\subset A_{a,b}.$$
    \end{enumerate}
\end{Theo}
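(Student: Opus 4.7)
My plan is to exploit the rotational symmetry of $A_{a,b}$ to reduce the principal Dirichlet eigenfunction to a one-dimensional radial problem, and then combine this with the inner uniform domain machinery of \cite{lierllsc} (in particular the small-scale $\varphi_U^2$-Poincar\'e inequalities) and the discretization argument of \cite{coulsc} to handle all four parts uniformly over $0<a<b<\infty$.

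For part (c), I would write $\varphi_{A_{a,b}}(x)=g(|x|)$ by rotational invariance and observe that $u(r):=r^{(n-1)/2}g(r)$ satisfies a Sturm-Liouville problem on $(a,b)$ with Dirichlet data and potential $(n-1)(n-3)/(4r^2)$. Splitting into three regimes --- thin ($b/a$ close to $1$), intermediate, and wide ($b/a$ large) --- and using elementary Sturm-Liouville or Bessel comparison in each would deliver the caricatures (\ref{p1}), (\ref{p2}), (\ref{p3}) together with matching two-sided eigenvalue estimates (\ref{annulieigval1}).

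For parts (a) and (b), the easy case is $b/a$ bounded away from $1$: the annulus is then inner uniform with constants depending only on $n$, so \cite{lierllsc} directly yields $\varphi_{A_{a,b}}^2$-volume doubling, $\varphi_{A_{a,b}}^2$-Poincar\'e inequalities, and hence (\ref{HKE}). The substantive case is the thin regime $\varepsilon=b/a-1\to 0$, where the inner uniformity constants degenerate. Here I would use that $A_{a,b}$ is essentially a product $(a,b)\times \mathbb{S}^{n-1}_a$: the caricature from (c) shows $\varphi_{A_{a,b}}^2\,r^{n-1}\,dr\,d\sigma$ is approximately a tensor product measure whose one-dimensional factor (weight $u(r)^2$) and angular factor (uniform measure on the sphere of radius $\sim a$) each satisfy uniform doubling and Poincar\'e. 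The small-scale $\varphi_U^2$-Poincar\'e inequalities of \cite{lierllsc} control balls of radius $\lesssim b-a$; the Coulhon-Saloff-Coste discretization \cite{coulsc} then chains these with the product structure to extend doubling and Poincar\'e to all scales, with constants depending only on $n$. Part (b) follows from the standard equivalence on Dirichlet metric measure spaces (Sturm, Grigor'yan, Saloff-Coste).

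For part (d), the plan is to invoke the eigenfunction comparison theorems of \cite{lierllsc,chaolsc}. Setting $a'=a-(b-a)\varepsilon^l$ and $b'=b+(b-a)\varepsilon^k$, the eigenvalue estimates (\ref{annulieigval1}) show that $\lambda_{A_{a,b}}$, $\lambda_U$, and $\lambda_{A_{a',b'}}$ all agree up to a factor $1+O(\varepsilon^{\min(k,l)})$; combined with the heat kernel bounds from (b) and boundary Harnack estimates derivable from (a), this forces the pointwise comparisons in (\ref{illu1}). The consequence $\varphi_U\asymp \varphi_{A_{a,b}}$ on the inner sub-annulus follows from the caricatures in (c), which are comparable between $A_{a,b}$ and $A_{a',b'}$ once one stays a safe distance from both boundaries. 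The main obstacle is the thin-regime case of part (a): the Coulhon-Saloff-Coste discretization must be executed carefully to respect the anisotropic geometry (radial extent $b-a$ versus angular extent $\sim a$) while keeping all constants dependent only on $n$, and the small-scale Poincar\'e input from \cite{lierllsc} must be aligned with the caricature from (c) to feed cleanly into the chaining step.
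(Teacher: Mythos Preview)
Your proposal is essentially on the same track as the paper. Parts (a) and (b) match the paper's argument exactly: the inner uniform case $b/a$ bounded away from $1$ is handled by \cite{lierllsc}, and for the thin regime the paper uses precisely the factorization $\varphi_U^2\,dx \asymp \varphi_I^2(r)\,\varphi_{U_0}^2(\theta)\,r^{n-1}\,dr\,d\theta$ (your product structure), small-scale Poincar\'e from \cite{lierllsc}, and the Coulhon--Saloff-Coste discretization, after which (\ref{HKE}) follows from the Grigor'yan/Saloff-Coste/Sturm equivalence.

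Two tactical points where the paper differs from your sketch are worth noting. For part (c) in the non-thin regime $a/b\le 1/2$, the paper does not use Bessel comparison; instead it applies the boundary Harnack principle near the inner boundary with the explicit harmonic function $h(x)=|x|^{-(n-2)}-a^{-(n-2)}$ (or $\log(|x|/a)$ when $n=2$), combined with the eigenfunction comparison of \cite{chaolsc} near the outer boundary. For part (d), the paper does not use boundary Harnack at all: the pointwise comparisons come from the large-time equilibrium estimate
\[
\sup_{x,y}\Bigl|\tfrac{e^{\lambda_U t}p^D_U(t,x,y)}{\varphi_U(x)\varphi_U(y)}-1\Bigr|\le c_1 e^{-c_2 t/\mathrm{diam}_U^2},
\]
which is a consequence of (a), together with domain monotonicity of $p^D$ and an additive eigenvalue gap bound $\lambda_{A_{a,b}}-\lambda_{A_{a',b'}}\le C$ obtained via Hadamard's variational formula. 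Your route through (\ref{annulieigval1}) would also yield the needed gap bound (since $C_1(n,x),C_2(n,x)\to\pi^2$ as $x\to 1$), and the equilibrium estimate is implicit in what you called ``heat kernel bounds from (b)'', so the difference is one of packaging rather than substance.
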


\begin{Rmk}
    \normalfont
(i) When $a=1$ and $b=1+\varepsilon$, Theorem \ref{illustration}(d) allows domain perturbations of $A_{1,1+\varepsilon}=\{x\in \mathbb{R}^n:1<|x|<1+\varepsilon\}$ in the radial direction on scale $\lesssim \varepsilon^3$, as $\varepsilon\to 0$. It can be shown that this is the best possible using our methods. It is not known if the conclusion of Theorem \ref{illustration}(d) holds if the scale $\varepsilon^3$ is replaced by a larger one like $\varepsilon^2$.   

(ii) Both of the enclosing domains in (\ref{illu1}) have spherical base $\mathbb{S}^{n-1}$. When $U_0=\mathbb{S}^{n-1}$ is replaced with some other spherical domain with boundary, our results allow perturbations of $U_0$ as well; see Theorem \ref{perturbthm} for the most general form of Theorem \ref{illustration}(d).
\end{Rmk}

In Theorem \ref{illustration}(a)(b), our contribution is when $b/a\in (1,2]$ is sufficiently close to $1$, where the upper bound \Quote{$2$} is chosen for convenience. 
Indeed, the collection of annuli with $b/a\in (2,\infty)$ is uniformly inner uniform, and the results of \cite{lierllsc} apply. For the proofs of volume doubling (VD) and Poincar\'{e} inequalities (PI), we refer the reader to Sections \ref{VDsubsection} and \ref{PIsubsection} respectively. Once (VD) and (PI) are proven, the heat kernel bounds (\ref{HKE}) immediately follow from general results due to Grigor'yan, Saloff-Coste, and Sturm (\cite{grigoryan}, \cite{saloff-coste}, \cite{sturm}). Theorem \ref{illustration}(c)(d) are new to the best of our knowledge; they are proven in Theorem \ref{annulusprofile}, Proposition \ref{annulieigval}, Theorem \ref{perturbthm}, and Corollary \ref{perturbcor}.

\subsection*{Historical background} 

There has been a long history of research between the interplay of volume doubling (VD), Poincar\'{e} inequalities (PI), and two-sided Gaussian-type heat kernel estimates (HKE). For Euclidean space equipped with a second-order uniformly elliptic divergence form operator, (HKE) was proven by Aronson (\cite{aronson1}, \cite{aronson2}). For complete Riemannian manifolds of nonnegative Ricci curvature, (VD) follows from the Bishop-Gromov volume comparison theorem, while (PI) was proven by Buser \cite{buser}. On the other hand, (HKE) for such manifolds were first obtained by Li and Yau \cite{liyau}. Soon after, Grigor'yan \cite{grigoryan} and Saloff-Coste \cite{saloff-coste} independently showed that for Riemannian manifolds, the conjunction of (VD) and (PI) is in fact equivalent to (HKE). Sturm \cite{sturm} then generalized this equivalence to metric measure spaces equipped with a symmetric Dirichlet form. Since then, many versions and variations of the equivalence $\text{(VD)+(PI)}\Leftrightarrow\text{(HKE)}$ have appeared in other settings, for example on graphs and fractal-like spaces.    

In \cite{gyryalsc}, Gyrya and Saloff-Coste considered the metric measure space $(U,d_U,h^2 d\mu)$, where $U\subseteq (X,\mu)$ is an unbounded inner uniform domain of some ambient space $(X,\mu)$ satisfying (HKE), and where $h>0$ is a harmonic function with Dirichlet boundary conditions on $U$. They considered a \textit{Doob $h$-transform} technique and proved (VD) and (PI) with respect to the weighted measure $h^2d\mu$, which implies the kernel $p^D_U(t,x,y)/h(x)h(y)$ on $(U,d_U, h^2 d\mu)$ satisfies (HKE). Following the framework of \cite{gyryalsc}, Lierl and Saloff-Coste \cite{lierllsc} proved analogous results for $(U,d_U,\varphi_U^2 d\mu)$, where $U$ is a bounded inner uniform domain. For bounded domains, $\varphi_U>0$ plays the role of $h$, and the results of \cite{lierllsc} imply that $e^{\lambda_U t}p^D_U(t,x,y)/\varphi_U(x)\varphi_U(y)$ satisfies (\ref{HKE}). More recently, the authors of this paper proved in \cite{chaolsc} stability estimates for $\varphi_U$ under domain perturbations of $U$. For certain inner uniform Euclidean domains $U$, the work \cite{chaolsc} describes explicit estimates (i.e. a \textit{caricature function}) for $\varphi_U$, and hence, explicit estimates for $p^D_U(t,x,y)$.

Note that \cite{gyryalsc}, \cite{lierllsc}, and \cite{chaolsc} all consider Dirichlet heat kernel or eigenfunction estimates on inner uniform domains. However, inner uniformity is sufficient, but often not necessary, to obtain such estimates. For instance, the collection of Euclidean boxes or annuli in $\mathbb{R}^n$ uniformly satisfy (\ref{HKE}) even though they are not uniformly inner uniform (Theorems \ref{illustration}, \ref{boxHK}). This suggests the equivalence $(\text{VD})+(\text{PI})\Leftrightarrow(\text{HKE})$ of (\cite{grigoryan}, \cite{saloff-coste}, \cite{sturm}) has the potential to be applied to many other families of \Quote{thin} domains that may fail to be inner uniform. For example in Figure \ref{HKEconj}, the families of domains $U_{\varepsilon,L}\subseteq \mathbb{R}^2$ parametrized by a small $\varepsilon>0$ and large $L>0$ (along their higher dimensional analogues) are all expected to satisfy (VD) and (PI) uniformly.

 \begin{figure}
    
  \centering
  \includegraphics[width=0.57\textwidth]{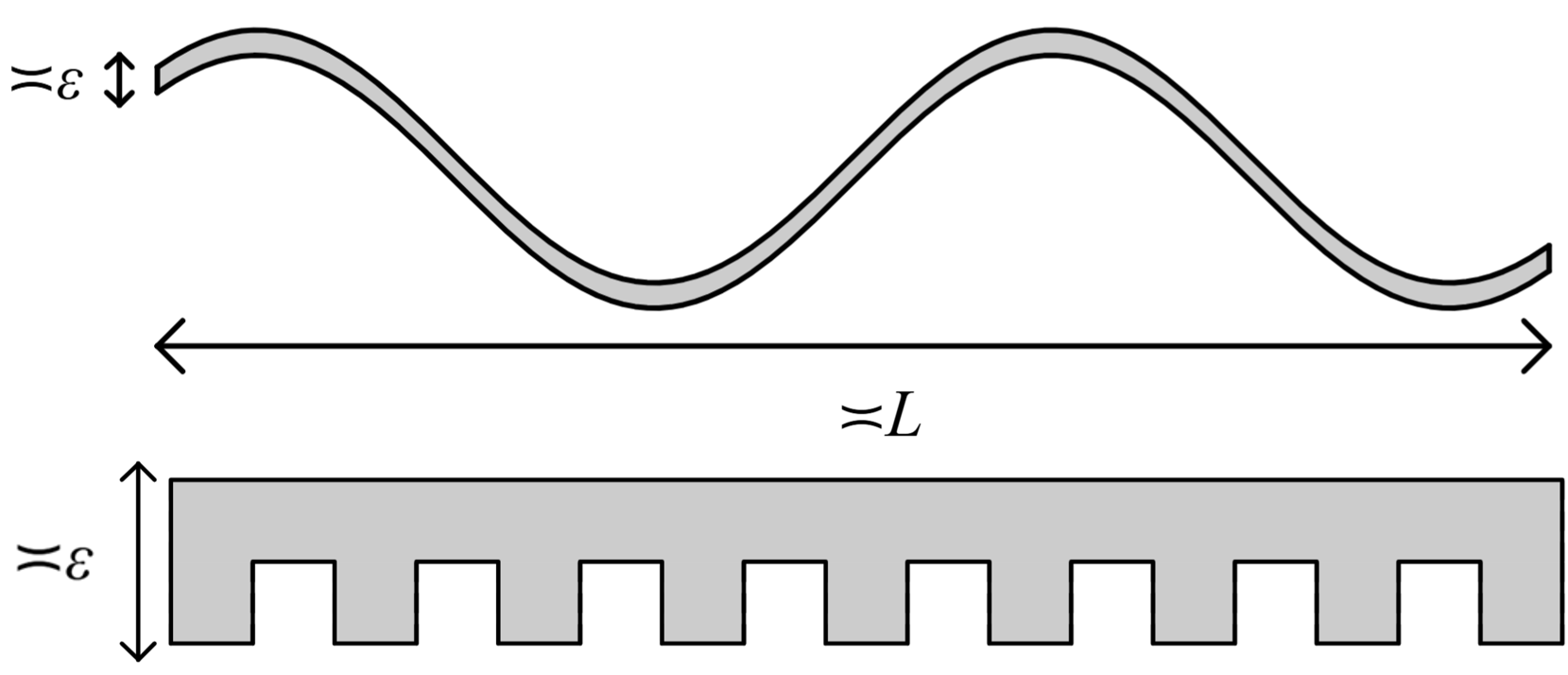}
  \caption{Thin bounded Euclidean domains $U_{\varepsilon,L}$ parametrized by length parameters $\varepsilon>0$ and $L>0$. These domains are expected to satisfy volume doubling and Poincar\'{e} inequalities with respect to the weighted measure $\varphi_{U_{\varepsilon,L}}^2 dx$, uniformly in the regime $\varepsilon\to 0$ and/or $L\to \infty$. }
  \label{HKEconj}
\end{figure}
Let us write $(\varphi_U^2\text{-VD})$ and $(\varphi_U^2\text{-PI})$ to denote (VD) and (PI) with respect to the weighted measure $\varphi_U^2 dx$; these notions are defined in Definitions \ref{vddefn} and \ref{pidefn}. At present, it seems out of reach to develop a general theory showing that large classes of \Quote{thin} domains (such as those in Figure \ref{HKEconj}) uniformly satisfy ($\varphi^2_U$-VD) and ($\varphi_U^2$-PI). Even if such a theory were developed, there would still be many other \Quote{thin} domains for which ($\varphi_U^2$-VD) and ($\varphi_U^2$-PI) fails, hence requiring different methods for obtaining sharp Dirichlet heat kernel bounds. For instance, in Section \ref{counterexample}, we prove that the family of all bounded convex domains in $\mathbb{R}^n$ fails to uniformly satisfy $(\varphi_U^2\text{-VD})$. 

\subsection*{Thin annular domains: volume doubling, Poincar\'{e} inequalities, and Dirichlet heat kernel estimates}

We thus restrict our attention to annular domains of the form $U=(a,b)\times U_0$ and try to prove results similar to Theorem \ref{illustration} for $U$. In particular, \textit{we attempt to understand sufficient conditions on the annular domain $U$ for \textup{(}$\varphi_U^2\textup{-VD)}$ and \textup{(}$\varphi_U^2$\textup{-PI)} to uniformly hold true.} We assume that the base $U_0\subseteq \mathbb{S}^{n-1}$ is an inner uniform domain. The notion of inner uniformity was independently introduced by Martio and Sarvas \cite{martiosarvas}, as well as Jones \cite{jones1}. Inner uniform domains form a wide collection of domains, including Lipschitz domains in $\mathbb{R}^n$ (Page 73, \cite{jones1}), non-tangentially accessible (NTA) domains, and even domains with irregular boundary points or fractal boundary. It follows from general results of \cite{lierllsc} that $(\varphi_{U_0}^2\text{-VD})$ and $(\varphi_{U_0}^2\text{-PI})$ hold uniformly over all inner uniform domains $U_0\subseteq \mathbb{S}^{n-1}$, which is equivalent to the Dirichlet heat kernel estimates ($\varphi_{U_0}^2$-HKE).    

In this paper, we focus on the regime where $b/a\in (1,2]$ and where $\text{diam}(U_0)$ is uniformly bounded below, see the left of Figure \ref{cover}. (The upper bound \Quote{$2$} in $b/a\leq 2$ is only chosen for convenience; see Remark \ref{thinremark}.) These conditions can be interpreted as a thin spherical shell with a spherical base $U_0\subseteq \mathbb{S}^{n-1}$ that is not too small.
Outside of this regime, $U$ can either be inner uniform or still be very thin (see the middle and right of Figure \ref{cover}, respectively); the first case is addressed by the much more general framework of \cite{lierllsc}, while we do not address the latter case in this paper.

\begin{figure}[H]
  \centering
  \includegraphics[width=0.8\textwidth]{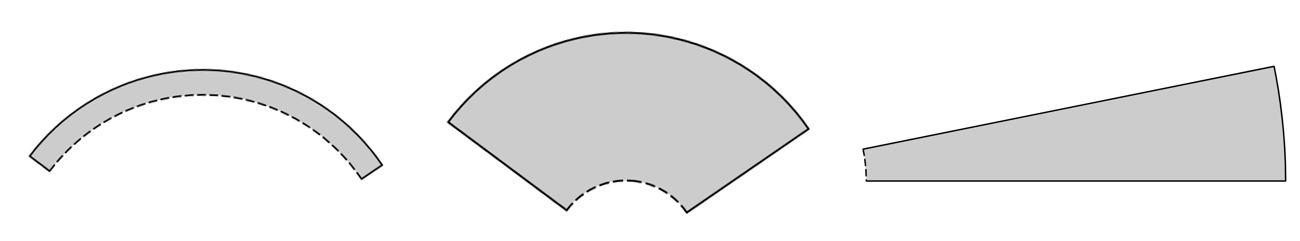}
  \caption{Depicted above are domains in $\mathbb{R}^2$ of the form $U=(1,b)\times U_0$, where the dashed line represents the angular base $U_0\subseteq \mathbb{S}^{1}$. Our work addresses $(\varphi_U^2\text{-VD})$, $(\varphi_U^2\text{-PI})$, and (\ref{HKE}) for \Quote{thin} domains in $\mathbb{R}^n$ generated by an inner uniform base $U_0$, with $\text{diam}(U_0)$ uniformly bounded below (e.g. the left figure). The middle figure (with $b$ bounded away from $1$ and $\infty$, and with $\textup{diam}(U_0)\asymp 1$) is inner uniform, hence it is covered by results of \cite{lierllsc}; our methods apply in this case as well. When $U_0$ has small diameter (e.g. the rightmost figure), $U$ is not inner uniform; our results do not apply uniformly in the regime where $\text{diam}(U_0)\to 0$. On the other hand, in the regime where $\text{diam}(U_0)$ is uniformly bounded below and $b\to \infty$, the rightmost figure is still inner uniform (see Page 129 \cite{gyryalsc}), and the results of \cite{lierllsc} again apply.}
  \label{cover}
\end{figure}

For thin spherical shells $U=(a,b)\times U_0\subseteq \mathbb{R}^n$ with an inner uniform base $U_0\subseteq \mathbb{S}^{n-1}$, our results (Theorems \ref{vdthm} and \ref{PIthm}) show that they satisfy ($\varphi_U^2$-VD) and ($\varphi_{U}^2$-PI) uniformly, and thus imply the following result.
\begin{Theo}
\label{sphericalHKE}
Let $U_0\subseteq \mathbb{S}^{n-1}$ be a domain. Consider the domain $U=(a,b)\times U_0\subseteq \mathbb{R}^n$. Suppose that $b/a\in (1,2]$ and $\textup{diam}(U_0)$ is uniformly bounded below by $d>0$. Furthermore, suppose that $U_0$ is $(C_0,c_0)$-inner uniform, and $U$ is $(C_0,c_0)$-locally inner uniform up to scale $b-a$. Then the two-sided Dirichlet heat kernel estimates (\textup{\ref{HKE}}) hold for the domain $U$, with all implicit constants depending only on $n, C_0,c_0$ and $d$.
\end{Theo}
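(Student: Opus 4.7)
The plan is to reduce Theorem \ref{sphericalHKE} to the two structural properties it assembles, namely ($\varphi_U^2$-VD) and ($\varphi_U^2$-PI), and then to invoke the general equivalence (VD)+(PI) $\Leftrightarrow$ (HKE) of Grigor'yan, Saloff-Coste, and Sturm (\cite{grigoryan}, \cite{saloff-coste}, \cite{sturm}). Since the theorem statement explicitly attributes the volume doubling and Poincar\'{e} inequalities to Theorems \ref{vdthm} and \ref{PIthm}, the formal proof collapses to a direct citation of those two results followed by a routine application of the general machinery; the substance lies in verifying that the hypotheses $b/a \in (1,2]$, $\textup{diam}(U_0)\geq d$, inner uniformity of $U_0$, and local inner uniformity of $U$ up to scale $b-a$ are exactly the hypotheses needed to feed into Theorems \ref{vdthm} and \ref{PIthm}.

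For volume doubling, I would first build a caricature function $\Phi_U(r,\omega)$ on $(a,b) \times U_0$ that factors as a radial piece (obtained by separation of variables from the Bessel-type radial ODE on $(a,b)$) times the spherical eigenfunction $\varphi_{U_0}(\omega)$, and show $\varphi_U \asymp \Phi_U$ via the eigenfunction comparison techniques of \cite{lierllsc} and \cite{chaolsc}. Doubling for $(U, d_U, \varphi_U^2 dx)$ then reduces to two ingredients: $\varphi_{U_0}^2$-doubling on the inner uniform base $U_0$ (by \cite{lierllsc}), and doubling of the one-dimensional weight $r^{n-1}$ times the squared radial profile on $(a,b)$. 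The first uses inner uniformity of $U_0$ together with the lower bound on $\textup{diam}(U_0)$, while the second uses $b/a \leq 2$ so that $r^{n-1}$ varies by a bounded factor across the shell, yielding a product-like doubling property with constants depending only on $n, C_0, c_0, d$.

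For the Poincar\'{e} inequality, the approach is to first establish small-scale $\varphi_U^2$-Poincar\'{e} inequalities at scales up to $b-a$ using the hypothesis that $U$ is locally inner uniform up to that scale, via the small-scale PI machinery of \cite{lierllsc}; at those scales the annulus genuinely looks inner uniform. I would then apply the Coulhon--Saloff-Coste discretization technique of \cite{coulsc} to bootstrap from small-scale PI to PI at all scales: build a net of balls of radius comparable to $b-a$, use the connectivity afforded by the spherical base together with radial thinness to chain test functions, and control overlap multiplicities using the already-proven volume doubling. Once both ($\varphi_U^2$-VD) and ($\varphi_U^2$-PI) are in hand, \cite{grigoryan}, \cite{saloff-coste}, \cite{sturm} deliver the two-sided bounds \textup{(\ref{HKE})}.

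The main obstacle will be the Poincar\'{e} step, specifically the discretization in the regime where $b/a$ is close to $1$: the radial scale $b-a$ can be arbitrarily small compared to the angular extent, so the cover by balls of radius $\sim b-a$ must respect this anisotropy, and the comparison of weighted means on neighboring balls must use the caricature product structure of $\varphi_U$ in an essential way to absorb the radial/angular imbalance. Obtaining constants independent of the ratio $b/a \in (1,2]$ and of $a, b$ individually is what makes the uniformity claim in Theorem \ref{sphericalHKE} non-trivial, and is the main reason the proof cannot simply be reduced to a fixed model annulus by rescaling.
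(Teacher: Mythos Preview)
Your proposal is correct and matches the paper's approach essentially line for line: the paper proves Theorem~\ref{sphericalHKE} by establishing ($\varphi_U^2$-VD) via the factorization $\varphi_U(r,\theta)\asymp \varphi_I(r)\varphi_{U_0}(\theta)$ of Lemma~\ref{lierlcompare} (Theorem~\ref{vdthm}), then ($\varphi_U^2$-PI) via small-scale Poincar\'e from local inner uniformity plus the Coulhon--Saloff-Coste discretization (Theorem~\ref{PIthm}), and finally cites \cite{grigoryan}, \cite{saloff-coste}, \cite{sturm}. One minor correction to your last paragraph: the paper does in fact rescale to $a=1$, $b=1+\varepsilon$ via Proposition~\ref{scaling}, so the non-triviality is uniformity in $\varepsilon\in(0,1]$ rather than any obstruction to rescaling per se.
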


We compare Theorem \ref{sphericalHKE} with the earlier Theorem \ref{illustration}. Note that the conclusion of Theorem \ref{sphericalHKE} covers Theorem \ref{illustration}(a)(b) because Theorem \ref{sphericalHKE} applies when $U_0=\mathbb{S}^{n-1}$. However, there is no analog of Theorem \ref{illustration}(c) because in general, we do not have explicit estimates for $\lambda(U_0)$ and $\varphi_{U_0}$; see Example \ref{vonkochexample} for a discussion of this issue. Regarding eigenfunction perturbations, Theorem \ref{perturbthm} in Section \ref{perturbannulisection} gives a result more general than Theorem \ref{illustration}(d); for conciseness we do not describe that result in Theorem \ref{sphericalHKE}.

Because the domains $U$ in Theorem \ref{sphericalHKE} satisfy ($\varphi_U^2$-VD) and ($\varphi_U^2$-PI) uniformly, we also obtain the following complementary heat kernel estimates.

\begin{Theo}
    \label{complementary}
    In the setting of Theorem \ref{sphericalHKE}, the Dirichlet heat kernel $p^D_U(t,x,y)$ of the domain $U=(a,b)\times U_0$ satisfies the following estimate: for all $t\geq \textup{diam}(U)^2$,
    $$
        \sup_{x,y\in U}\Bigg|\frac{e^{t\lambda_U }p^D_U(t,x,y)}{\varphi_U(x)\varphi_U(y)}-1\Bigg|\leq c_1e^{-c_2 t/\textup{diam}(U)^2},$$
        where the constants $c_1,c_2$ depend only on $n, C_0,c_0,$ and $d$.
\end{Theo}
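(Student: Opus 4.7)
The plan is to exploit the standard fact that, once volume doubling and Poincar\'e hold uniformly on $(U,d_U,\varphi_U^2 dx)$, the Doob-transformed heat kernel $\widetilde p_U(t,x,y)=e^{t\lambda_U}p^D_U(t,x,y)/(\varphi_U(x)\varphi_U(y))$ is a Markov kernel with invariant probability measure $\varphi_U^2 dx$ (using $\|\varphi_U\|_{L^2(U)}=1$) and converges exponentially to the constant function $1$. The starting point is the Chapman--Kolmogorov identity combined with $\widetilde P_t 1 = 1$:
\[
\widetilde p_U(t,x,y)-1 = \int_U \bigl(\widetilde p_U(t/2,x,z)-1\bigr)\bigl(\widetilde p_U(t/2,z,y)-1\bigr)\,\varphi_U^2(z)\,dz,
\]
so by Cauchy--Schwarz it suffices to control $\|\widetilde p_U(t/2,x,\cdot)-1\|_{L^2(\varphi_U^2 dx)}$ uniformly in $x$.

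I would then assemble two ingredients. First, a spectral gap for $\widetilde P_t$ of size at least $c/\textup{diam}(U)^2$: since $U=B_U(x_0,\textup{diam}(U))$ for any $x_0\in U$, a standard Jerison-type chaining of the uniform $(\varphi_U^2\text{-PI})$ on subballs (using $(\varphi_U^2\text{-VD})$ to absorb inflation factors) produces the global Poincar\'e inequality
\[
\int_U (f-\bar f)^2 \varphi_U^2 dx \le C\,\textup{diam}(U)^2 \int_U |\nabla f|^2 \varphi_U^2 dx, \qquad \bar f = \int_U f\varphi_U^2 dx,
\]
with $C$ depending only on the VD/PI constants. This is exactly the spectral gap of the Doob-transformed Dirichlet form. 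Second, a uniform $L^\infty$ bound on $\widetilde p_U$ at $t_0:=\textup{diam}(U)^2$: by Theorem \ref{sphericalHKE} together with $V_{\varphi_U^2}(x,\textup{diam}(U))=\int_U\varphi_U^2 dx=1$, one obtains $\widetilde p_U(t_0,x,y)\le C_0$ uniformly, and hence $\|\widetilde p_U(t_0,x,\cdot)-1\|_{L^2(\varphi_U^2 dx)}^2 \le C_0^2+1$.

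Combining them, for $t\ge 2\textup{diam}(U)^2$, applying the spectral gap contraction to the mean-zero function $\widetilde p_U(t_0,x,\cdot)-1$ over the interval $[t_0,t/2]$ gives an $L^2$ decay bounded by $(C_0+1)e^{-c(t/2-t_0)/\textup{diam}(U)^2}$, and Cauchy--Schwarz then yields
\[
\sup_{x,y\in U}\bigl|\widetilde p_U(t,x,y)-1\bigr| \le (C_0+1)^2 e^{-c(t-2\textup{diam}(U)^2)/\textup{diam}(U)^2},
\]
which rewrites as $c_1 e^{-c_2 t/\textup{diam}(U)^2}$ after absorbing the shift into the prefactor. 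The short-time range $\textup{diam}(U)^2 \le t \le 2\textup{diam}(U)^2$ is handled by enlarging $c_1$, since the uniform $L^\infty$ bound $|\widetilde p_U(t,x,y)-1|\le C_0+1$ already dominates the claimed estimate on that bounded interval. The one step needing genuine care is verifying that the global Poincar\'e constant scales as $\textup{diam}(U)^2$ uniformly over the class of annular domains in question; this follows the classical Saloff-Coste--Sturm recipe (\cite{saloff-coste}, \cite{sturm}) but one must check that the constants extracted from the VD and PI statements proved earlier in the paper indeed depend only on $n, C_0, c_0, d$ and not on $U$ itself.
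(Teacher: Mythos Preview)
Your approach is essentially the paper's: Theorem~\ref{complementary} is derived from Theorem~\ref{equilibrium} (the $\textup{diam}_U$-version, obtained from $(\varphi_U^2\text{-VD})+(\varphi_U^2\text{-PI})$ by the argument of \cite{lscequil}, which is exactly the spectral-gap-plus-$L^\infty$-bound scheme you wrote out) together with Lemma~\ref{diamlemma}. So the overall strategy is correct and matches the paper.

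There is, however, a genuine slip in your execution. You write ``since $U=B_U(x_0,\textup{diam}(U))$ for any $x_0\in U$'', but $B_U$ is a \emph{geodesic} ball while $\textup{diam}(U)$ is the Euclidean diameter, and these do not coincide for thin annular domains. For the full annulus $A_{1,1+\varepsilon}$ one has $\textup{diam}(U)\approx 2$ but $\textup{diam}_U\approx \pi$, so $B_U(x_0,\textup{diam}(U))$ is a proper subset of $U$. Consequently your global Poincar\'e inequality, and hence your spectral gap, come with constant $P\cdot\textup{diam}_U^2$, not $P\cdot\textup{diam}(U)^2$. The repair is exactly the step the paper isolates: one must show $\textup{diam}_U\asymp \textup{diam}(U)$ uniformly over the class. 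Since $U$ itself is \emph{not} uniformly inner uniform, Lemma~\ref{diamlemma} does not apply to $U$ directly; instead one combines Lemma~\ref{distcomp0} (giving $\textup{diam}_U\asymp\max\{\textup{diam}_{U_0},\varepsilon\}$) with Lemma~\ref{diamlemma} applied to the inner uniform base $U_0$ (giving $\textup{diam}_{U_0}\asymp\textup{diam}(U_0)$), and then observes $\textup{diam}(U)\asymp\textup{diam}(U_0)$ because $b/a\le 2$ and $\textup{diam}(U_0)\ge d$. Your closing caveat about ``checking the constants depend only on $n,C_0,c_0,d$'' misidentifies the issue: the VD/PI constants are already uniform by Theorems~\ref{vdthm} and~\ref{PIthm}; it is the passage from intrinsic to Euclidean diameter that needs the extra input.
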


Theorem \ref{complementary} follows from Theorem \ref{equilibrium} and Lemma \ref{diamlemma}, and it can be understood as follows. A Brownian motion inside $U$ killed upon hitting the boundary $\partial U$ survives for at least time $t$ with a probability decaying like $e^{-\lambda_U t}$. The term $e^{\lambda_U t}p^D_U(t,x,y)$ can then be interpreted as a rescaled version of the Dirichlet heat kernel $p^D_U(t,x,y)$, with rescaling to account for the decaying survival probability. Theorem \ref{complementary} asserts that $e^{\lambda_U t}p^D_U(t,x,y)$ equilibrates to its limit $\varphi_U(x)\varphi_U(y)$ after a time of order $\text{diam}(U)^2$, and the convergence then occurs at an exponential rate decaying in $t/\text{diam}(U)^2$. For thin annular domains $U$, the killing at the boundary occurs in time $t\asymp 1/\lambda_U\asymp (b-a)^2$ (see Lemma \ref{eigvaldecomp}), which is much faster than the time scale $t\asymp \text{diam}(U)^2$ for mixing in space. The heat kernel estimate of Theorem \ref{complementary} simultaneously captures both of these phenomena.

\begin{Rmk}
    \normalfont
    In later sections, our results will often require $\lambda(U_0)$, the first Dirichlet eigenvalue of $U_0$, to be uniformly bounded above. Because $U_0$ is inner uniform, $U_0$ contains a ball with radius comparable to $\text{diam}(U_0)$; this follows from e.g. Lemma 3.20, \cite{gyryalsc} and Lemma \ref{diamlemma}. It follows that $\lambda(U_0)\lesssim 1/\text{diam}(U_0)^2$. Therefore, to ensure that $\lambda(U_0)$ is uniformly bounded above, it is sufficient to assume $\text{diam}(U_0)$ is uniformly bounded below. 
\end{Rmk}

\subsection*{Proof strategy}

Let us give a brief overview of the proof strategy used to obtain our main results, such as Theorem \ref{illustration}. By dilation arguments (Proposition \ref{scaling}) we may assume that $a=1$ and $b=1+\varepsilon$ for some $\varepsilon\in (0,1]$. Next, we observe in the parameter regime of interest, we have the \Quote{factorization}
\begin{align}
    \label{factorization}
    \varphi_U(r,\theta)\asymp \varphi_I(r)\cdot \varphi_{U_0}(\theta),
\end{align}
where $\varphi_I$ is the Dirichlet \textit{Laplacian} eigenfunction of the interval $I=(1,1+\varepsilon)$. The comparison (\ref{factorization}) is proven in Lemma \ref{lierlcompare} and uses a result of \cite{lierllsc}. In particular, (\ref{factorization}) implies that $\varphi_U^2 dx$ is comparable to a product measure $\varphi_I^2(r)\varphi_{U_0}^2(\theta)r^{n-1}drd\theta$, with the first and second factors satisfying ($\varphi_I^2$-VD) and ($\varphi_{U_0}^2$-VD), respectively. This enables us to prove ($\varphi_U^2$-VD) in Theorem \ref{vdthm}. Once the volume doubling property ($\varphi_U^2$-VD) is obtained, we then combine it with (i) small scale $\varphi_U^2$-Poincar\'{e} inequalities \cite{lierllsc}, and (ii) a discretization technique of Coulhon and Saloff-Coste \cite{coulsc}, to obtain ($\varphi_U^2$-PI) uniformly for all locations and scales (Theorem \ref{PIthm}). We conclude that $U$ satisfies ($\varphi_U^2$-VD) and ($\varphi_U^2$-PI), and hence satisfies (\ref{HKE}) by results of \cite{grigoryan}, \cite{saloff-coste}, \cite{sturm}. As a matter of fact, our methods also apply in the easier case when the weight $\varphi_U^2$ is replaced by a positive constant function on $U$. For example, when  $\varphi_U\equiv \text{vol}(U)^{-1/2}$ is the \text{first Neumann eigenfunction} of $U$, we obtain Neumann heat kernel estimates for thin annular domains; see Section \ref{neumann}.   

To then prove estimates for $\varphi_U$ under domain perturbations, we observe that ($\varphi_U^2$-VD) and ($\varphi_U^2$-PI) together imply Theorem \ref{equilibrium}, a large-time Dirichlet heat kernel estimate (see also Theorem \ref{complementary}). This estimate allows us to adapt the method of the authors' earlier work \cite{chaolsc} and prove e.g. the perturbation estimates as in Theorem \ref{illustration}(d).

\subsection*{Organization of paper}

In Section \ref{examples}, we give several more examples which follow from our main results. In Section \ref{notation}, we describe commonly used notations and conventions. In Section \ref{preliminaries}, we define some basic notions to be used in the proofs. Section \ref{mainresults} consists of the proofs of the main results. In Section \ref{neumann} of the Appendix, we describe the application of our methods to Neumann heat kernel estimates on thin annular domains. In Section \ref{counterexample} of the Appendix, we construct an explicit  counterexample to uniform $\varphi_U^2$-volume doubling, and formulate Conjecture \ref{conjVD}.  

\subsection{Examples}
\label{examples}
\begin{Exm}
    \label{harmonicexample}
    \normalfont
    On $\mathbb{R}^n$, choose a subset of $k$ coordinates for some $k\in \{1,2,...,n\}$. By relabeling, we may assume that the coordinates chosen are the first $k$ coordinates $x_1,...,x_k$. Let $P_k$ denote the restriction of the harmonic polynomial $x_1x_2\cdots x_k$ to $\mathbb{S}^{n-1}$. We have that $P_k$ is the principal Dirichlet eigenfunction of the domain $U_k:=\mathbb{S}^{n-1}\cap \{x_i>0\text{ for all }1\leq i\leq k\}$ with eigenvalue $k(k+n-2)$. This assertion follows from e.g. Exercise 10.19, \cite{grig}.
     Note that  $P_k$ is not normalized to have $L^2(U_k)$-norm $1$, but in a fixed dimension $n$, we have $P_k\asymp P_k/\|P_k\|_{L^2(U_k)}$ since $\|P_k\|_{L^2(U_k)}\asymp 1$. 

    For instance, in $\mathbb{R}^2$, the principal Dirichlet eigenfunction of $\{x_1^2+x_2^2=1,x_1>0,x_2>0\}$ is given in polar coordinates by $\theta\mapsto \cos(\theta)\sin(\theta)$ with eigenvalue $4$. The eigenfunction and eigenvalue of $\{x_1^2+x_2^2=1,x_2>0\}$ are equal to $\theta\mapsto \sin(\theta)$ and $1$, respectively. 
    
    To express $P_k$ in terms of intrinsic geometric quantities associated to the sphere, define
    $E_i:=\mathbb{S}^{n-1}\cap \{x\in \mathbb{R}^n:x_i=0\}$
    to be the equator of $\mathbb{S}^{n-1}$ cut out by the hyperspace $\{x_i=0\}$, and note that 
    $$x_i\asymp \text{dist}(x,E_i),\enspace \forall x=(x_1,x_2,...,x_n)\in \mathbb{S}^{n-1},\enspace x_i>0.$$
    The distance to $E_i$ is taken with respect to the Riemannian distance on $\mathbb{S}^{n-1}$. It follows that 
    \begin{align}
        \label{PStheta}
        P_k(\theta)\asymp \prod_{i=1 }^{k}\text{dist}(\theta,E_i),\enspace \forall \theta\in U_k\subseteq \mathbb{S}^{n-1}.
    \end{align}
    The formula (\ref{PStheta}) is a caricature function for the first Dirichlet eigenfunction of $U_k$. It captures the fact that $P_k$ decays linearly as $\theta\in U_k$ approaches each $E_i$ and decays faster than linearly as $\theta$ approaches the intersection of multiple equators. 
    
    Our main results applied to the domains $U_k$ read as follows.

    \begin{Theo}
        \label{Smain1}
        Consider the domain $U_{a,b}:=(a,b)\times U_k\subseteq \mathbb{R}^n$. Let $\varphi_{U_k}$ (resp. $\varphi_{U_{a,b}}$) denote the principal Dirichlet eigenfunction of $U_k$ (resp. $U_{a,b}$), with $L^2$-norm $1$. 
        \begin{enumerate}
            \item[(a)] $U_{a,b}$ satisfies \textup{(}$\varphi_{U_{a,b}}^2$\textup{-VD)} and \textup{(}$\varphi_{U_{a,b}}^2$\textup{-PI)} with constants depending only on $n$.
            \item[(b)] $U=U_{a,b}$ satisfies \textup{(\ref{HKE})}, with constants depending only on $n$.
             \item[(c)] Suppose $b/a\in (1,2]$. We have, with all implied constants depending only on $n$,
        \begin{align}
            \nonumber
        \varphi_{U_{a,b}}(r,\theta)\asymp \frac{1}{a^{(n-1)/2}}\frac{\min\{r-a,b-r\}}{(b-a)^{3/2}}\prod_{i=1}^{k}\textup{dist}(\theta,E_i).
        \end{align}
        \item[(d)] With $\lambda(U_k)=k(k+n-2)$ and $C_1,C_2$ defined as in (\ref{C1C2}),
        \begin{align}
        \label{Uabeigval}\frac{C_1(n,b/a)}{(b-a)^2}+\frac{\lambda(U_k)}{b^2}\leq \lambda(U_{a,b})\leq \frac{C_2(n,b/a)}{(b-a)^2}+\frac{\lambda(U_k)}{a^2}.
        \end{align}
        \end{enumerate}
    \end{Theo}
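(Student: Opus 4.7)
\textbf{Proof plan for Theorem \ref{Smain1}.} The theorem is a direct application of the main results of the paper to the specific base domain $U_0=U_k\subseteq\mathbb{S}^{n-1}$, so the work consists in verifying hypotheses and then substituting the known description of $\varphi_{U_k}$ from (\ref{PStheta}). The first step is to observe that $U_k$ is the intersection of $\mathbb{S}^{n-1}$ with the open orthant $\{x_i>0:1\le i\le k\}$, hence a spherical Lipschitz domain with Lipschitz constants depending only on $n$. By the inclusion Lipschitz $\subset$ inner uniform (Page 73 of \cite{jones1}), $U_k$ is $(C_0,c_0)$-inner uniform with $C_0,c_0$ depending only on $n$, and $\text{diam}(U_k)\ge\pi/2$. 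This verifies the hypothesis of Theorem \ref{sphericalHKE} on the base.

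For parts (a) and (b), I would split on the ratio $b/a$. When $b/a\in(1,2]$, Theorem \ref{sphericalHKE} applied to $U_{a,b}=(a,b)\times U_k$ yields both ($\varphi_{U_{a,b}}^2$-VD) and ($\varphi_{U_{a,b}}^2$-PI), and hence (\ref{HKE}), with constants depending only on $n$. (One must check that $U_{a,b}$ is $(C_0,c_0)$-locally inner uniform up to scale $b-a$: this is inherited from the inner uniformity of $U_k$ lifted to polar coordinates, exactly as used for the proof of Theorem \ref{illustration} in the annulus case.) When $b/a>2$, the domain $U_{a,b}$ is itself uniformly inner uniform with constants depending only on $n$ (cf.\ Page 129 of \cite{gyryalsc}), and the general theory of \cite{lierllsc} provides ($\varphi_{U_{a,b}}^2$-VD), ($\varphi_{U_{a,b}}^2$-PI) and (\ref{HKE}).

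For part (c), I would invoke the factorization principle (\ref{factorization}), which in the present notation reads $\varphi_{U_{a,b}}(r,\theta)\asymp\varphi_I(r)\,\varphi_{U_k}(\theta)$ with $I=(a,b)$, up to the correct $L^2$-normalization with respect to the polar measure $r^{n-1}\,dr\,d\theta$. The radial factor is the standard sine eigenfunction on $(a,b)$, which satisfies
\[
\varphi_I(r)\asymp \frac{\min\{r-a,b-r\}}{(b-a)^{3/2}}.
\]
Since $b/a\in(1,2]$ forces $r^{n-1}\asymp a^{n-1}$ throughout $(a,b)$, the $L^2$-normalization in polar coordinates contributes a factor $a^{-(n-1)/2}$. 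Substituting the caricature (\ref{PStheta}) for $\varphi_{U_k}$ then yields the claimed product formula. The main technical nuisance here is keeping track of the three separate normalizations (1D interval, spherical base, full Euclidean measure) and confirming that they combine to the stated $a^{-(n-1)/2}(b-a)^{-3/2}$ prefactor; this is straightforward bookkeeping given the factorization.

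For part (d), the bound is obtained by applying Proposition \ref{annulieigval} (already established in the paper for arbitrary thin annular domains $(a,b)\times U_0$ in terms of $\lambda(U_0)$) with $U_0=U_k$ and the explicit value $\lambda(U_k)=k(k+n-2)$ recalled in the example; no additional estimate is needed. The main obstacle across the whole proof is really step (a)/(b): namely, checking that the local inner uniformity constants of $U_{a,b}$ up to scale $b-a$ can be taken to depend only on $n$, independent of $a,b$ and $k$. This follows by combining the dimension-only inner uniformity constants of $U_k$ with the scaling argument of Proposition \ref{scaling}, reducing to the normalized case $a=1$, $b=1+\varepsilon$.
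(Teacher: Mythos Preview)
Your proposal is correct and follows essentially the same route as the paper: split (a)(b) on the ratio $b/a$, invoking the paper's VD and PI theorems in the thin regime and \cite{lierllsc} otherwise; derive (c) from the factorization Lemma \ref{lierlcompare} together with the caricature (\ref{PStheta}); and read off (d) from the eigenvalue estimates. Two small citation corrections: for (a) Theorem \ref{sphericalHKE} as stated only asserts (\ref{HKE}), so to claim ($\varphi_{U_{a,b}}^2$-VD) and ($\varphi_{U_{a,b}}^2$-PI) you should cite Theorems \ref{vdthm} and \ref{PIthm} directly, as the paper does; and for (d), Proposition \ref{annulieigval} bounds only $\lambda(A_{a,b})$ for the full annulus $U_0=\mathbb{S}^{n-1}$, not general $(a,b)\times U_0$---you need to combine it with Lemma \ref{eigvaldecomp}, which supplies the additive $\lambda(U_0)/b^2$ and $\lambda(U_0)/a^2$ terms.
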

    When $b/a\in [2,\infty)$, Theorem \ref{Smain1}(a)(b) already follows from \cite{lierllsc}. Our contribution to Theorem \ref{Smain1}(a)(b) is for thin annular domains where $b/a\in (1,2]$, in which case the desired results follow from Theorems \ref{vdthm} and \ref{PIthm}, respectively. Theorem \ref{Smain1}(c)(d) is obtained by combining (\ref{PStheta}), Lemma \ref{lierlcompare}, and Lemma \ref{eigvaldecomp}.
    
    With notation as above, we also define $$U_k(\eta):=\{\theta\in \mathbb{S}^{n-1}:\text{dist}(\theta,U_k)<\eta\}$$
    to be the $\eta$-neighborhood of the domain $U_k\subseteq \mathbb{S}^{n-1}$. Regarding the stability of $\varphi_{U_{a,b}}$ under domain perturbations, we have the following result, which allows perturbations in both the radial and angular directions. 

    \begin{Theo}
        \label{Smain2}
        Fix two nonnegative sequences $\{f(\varepsilon)\}_{\varepsilon>0}$ and $\{g(\varepsilon)\}_{\varepsilon>0}$ with $\max\{f(\varepsilon),g(\varepsilon)\}\in [0,\varepsilon^3]$ for all $\varepsilon\in (0,1]$. 
        Put $a'=a-af(b/a-1)$ and $b'=b+ag(b/a-1)$ so that $a'\leq a<b\leq b'$.
        For all sufficiently small $\varepsilon>0$ and $\eta>0$ depending only on $n$, if $U\subseteq \mathbb{R}^n$ is any domain with
        $$(a,b)\times U_k\subseteq U\subseteq (a',b')\times U_k(\eta),$$
        then, with all implied constants depending only on $n$,
        \begin{align*}
        \varphi_{(a,b)\times U_k}\lesssim \varphi_U \text{ on }(a,b)\times U_k\hspace{0.3in}\text{ and }\hspace{0.3in} \varphi_U\lesssim \varphi_{(a',b')\times U_k(\eta)} \text{ on }U.
        \end{align*}
    \end{Theo}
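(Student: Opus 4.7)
The strategy is to apply Theorem \ref{perturbthm}, the general perturbation result for thin annular domains with an inner uniform spherical base, in the case $U_0 = U_k$. We need to verify three hypotheses: (i) $U_k$ is inner uniform in $\mathbb{S}^{n-1}$ with constants depending only on $n$; (ii) the enlargement $U_k(\eta)$ is inner uniform with constants uniform in small $\eta > 0$; and (iii) the radial parameters $a, b, a', b'$ fall within the scale range allowed by Theorem \ref{perturbthm}. Local inner uniformity of the annular domains at scales $\leq b-a$ is inherited from the inner uniformity of the base, combined with the product-like radial structure.

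For (i), observe that $U_k = \mathbb{S}^{n-1} \cap \{x_1 > 0, \ldots, x_k > 0\}$ is a spherical simplex whose boundary is made up of pieces of the orthogonal equators $E_1, \ldots, E_k$. Near every boundary stratum, $U_k$ is bi-Lipschitz to a Euclidean orthant in $\mathbb{R}^{n-1}$ with constants depending only on $n$. One constructs John-type curves joining any two points of $U_k$ through the natural center $\theta_0 \in U_k$ of the simplex via spherical geodesics, verifying the inner uniformity conditions directly with constants $(C_0, c_0)$ depending only on $n$. For (ii), the boundary $\partial U_k(\eta)$ is obtained by inflating $\partial U_k$ by $\eta$; for $\eta$ small (depending only on $n$), this preserves the Lipschitz structure with controlled constants, and the curves from (i) can be perturbed to stay inside $U_k(\eta)$. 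In particular $\textup{diam}(U_k(\eta)) \asymp \textup{diam}(U_k) \asymp 1$.

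For (iii), with $\varepsilon = b/a - 1$, the hypothesis $\max\{f(\varepsilon), g(\varepsilon)\} \leq \varepsilon^3$ gives $a - a' \leq a\varepsilon^3 = (b-a)\varepsilon^2$ and $b' - b \leq a\varepsilon^3 = (b-a)\varepsilon^2$, so both radial perturbations are at scale $(b-a)\varepsilon^2$, matching the sharp form allowed by Theorem \ref{illustration}(d) and Remark (i) following it. With (i)--(iii) established, Theorem \ref{perturbthm} delivers the two comparisons
$$\varphi_{(a,b) \times U_k} \lesssim \varphi_U \text{ on } (a,b) \times U_k, \qquad \varphi_U \lesssim \varphi_{(a',b') \times U_k(\eta)} \text{ on } U,$$
with implied constants depending only on $n$.

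The main obstacle is step (ii): one must confirm that the inner uniformity constants of $U_k(\eta)$ do not degrade as $\eta \to 0^+$. The delicate points are the codimension-two (and higher) intersections $E_i \cap E_j$, where the $\eta$-neighborhood rounds an almost-singular corner in a controlled but nontrivial way. A clean argument would flatten $\mathbb{S}^{n-1}$ near each such corner via a bi-Lipschitz chart to a Euclidean cone and invoke the fact that $\eta$-neighborhoods of orthants in $\mathbb{R}^m$ are inner uniform with constants independent of $\eta$.
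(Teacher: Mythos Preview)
Your proposal is correct and follows the same route as the paper: Theorem~\ref{Smain2} is stated in Example~\ref{harmonicexample} as a direct application of the general perturbation result Theorem~\ref{perturbthm} to the specific base $A_0=U_k$ and enlargement family $B_0=U_k(\eta)$, and the paper does not spell out the verification of hypotheses beyond this. Your sketch of (i)--(iii) is appropriate; the only items you should add to make the checklist complete are Assumption~\ref{assump2} (the volume ratio $\sigma_{n-1}(U_k(\eta))/\sigma_{n-1}(U_k)\to 1$ as $\eta\to 0$, which supplies the function $\Phi$ and dictates how small $\eta$ must be) and Assumption~\ref{assump3} (the $\delta$-interior of $U_k$ captures most of its measure, which is immediate since $\partial U_k$ is a finite union of great-sphere pieces), both of which hold with constants depending only on $n$.
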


    In Theorem \ref{Smain2}, while we have $\varphi_{(a',b')\times U_k(\eta)}(r,\theta)\asymp \varphi_{(a',b')}(r)\varphi_{U_k(\eta)}(\theta)$ due to Lemma \ref{lierlcompare}, we do not have explicit estimates on $\varphi_{U_{k}(\eta)}(\theta)$. Thus in contrast to Theorem \ref{illustration}(d), we are unable to give a rigorous proof of the inequality $\varphi_U\asymp \varphi_{(a,b)\times U_S}$ on some compact subregion of $(a,b)\times U_k$, even though we expect it to hold. This illustrates the general principle that, for domains $A\subseteq U\subseteq B$, the inequalities $\varphi_A\lesssim \varphi_U\lesssim \varphi_B$ imply $\varphi_U\asymp \varphi_A$ only if we can prove the reverse inequality $\varphi_B\lesssim \varphi_A$ on an appropriate subdomain of $A$, which in turn requires explicit estimates on $\varphi_B$.  
\end{Exm}

\begin{Exm}
    \label{unitcircle}
    \normalfont On the unit circle $\mathbb{S}^{1}$, we have $\Delta_{\mathbb{S}^{1}}=\partial ^2/\partial \theta^2$. Thus, given any angular interval $(0,\theta_1)\subseteq \mathbb{S}^1$ where $\theta_1\in (0,2\pi)$, we can compute that
    $$\varphi_{(0,\theta_1)}(\theta)=\sqrt{\frac{2}{\theta_1}}\sin\Big(\frac{\pi \theta}{\theta_1}\Big),\hspace{0.1in}\lambda_{(0,\theta_1)}=\frac{\pi^2}{\theta_1^2}.$$
    In the case when $\theta_1\geq \theta'$ is uniformly bounded from below by a small $\theta'>0$, our main results apply to domains of the form $(a,b)\times (0,\theta_1)$, giving results completely analogous to Theorems \ref{illustration}, \ref{Smain1}, and \ref{Smain2}. All implied constants now depend only on $\theta'$. For brevity, we do not rewrite in full the analogues of those results. To illustrate Theorem \ref{Smain2} in this setting with an example, consider the annular domains
    $$A:=(1,1+\varepsilon)\times (0,3\pi/4)\subseteq \mathbb{R}^2\hspace{0.3in}\text{ and }\hspace{0.3in}B:=(1-\varepsilon^3,1+\varepsilon+\varepsilon^3)\times (-\eta,3\pi/4+\eta)\subseteq \mathbb{R}^2.$$
    By Lemma \ref{lierlcompare}, the Dirichlet Laplacian eigenfunction $\varphi_A$ of $A=(1,1+\varepsilon)\times (0,3\pi/4)$ (expressed in polar coordinates $(r,\theta)$) has boundary behavior comparable to that of the Euclidean box $(1,1+\varepsilon)\times (0,3\pi/4)$, and similarly for $B$.  
    For all sufficiently small $\varepsilon>0$ and $\eta>0$, for any arbitrary domain $U$ with $A\subseteq U\subseteq B$, we have the eigenfunction inequalities
    \begin{align}
        \label{AB1}
        & \varphi_A\lesssim \varphi_U\text{ on }A\hspace{0.3in}\text{ and }\hspace{0.3in} \varphi_U\lesssim \varphi_B \text{ on }U,
    \end{align}
    which implies that with implied constants \textit{independent} of $\varepsilon$ and $\eta$,
    \begin{align}
        \label{AB2}
        \varphi_U\asymp \varphi_A \text{ on } (1+\varepsilon^3,1+\varepsilon-\varepsilon^3)\times (\eta,3\pi/4-\eta)\subset A. 
    \end{align}

        \begin{figure}
        \centering \includegraphics[width=0.35\textwidth]{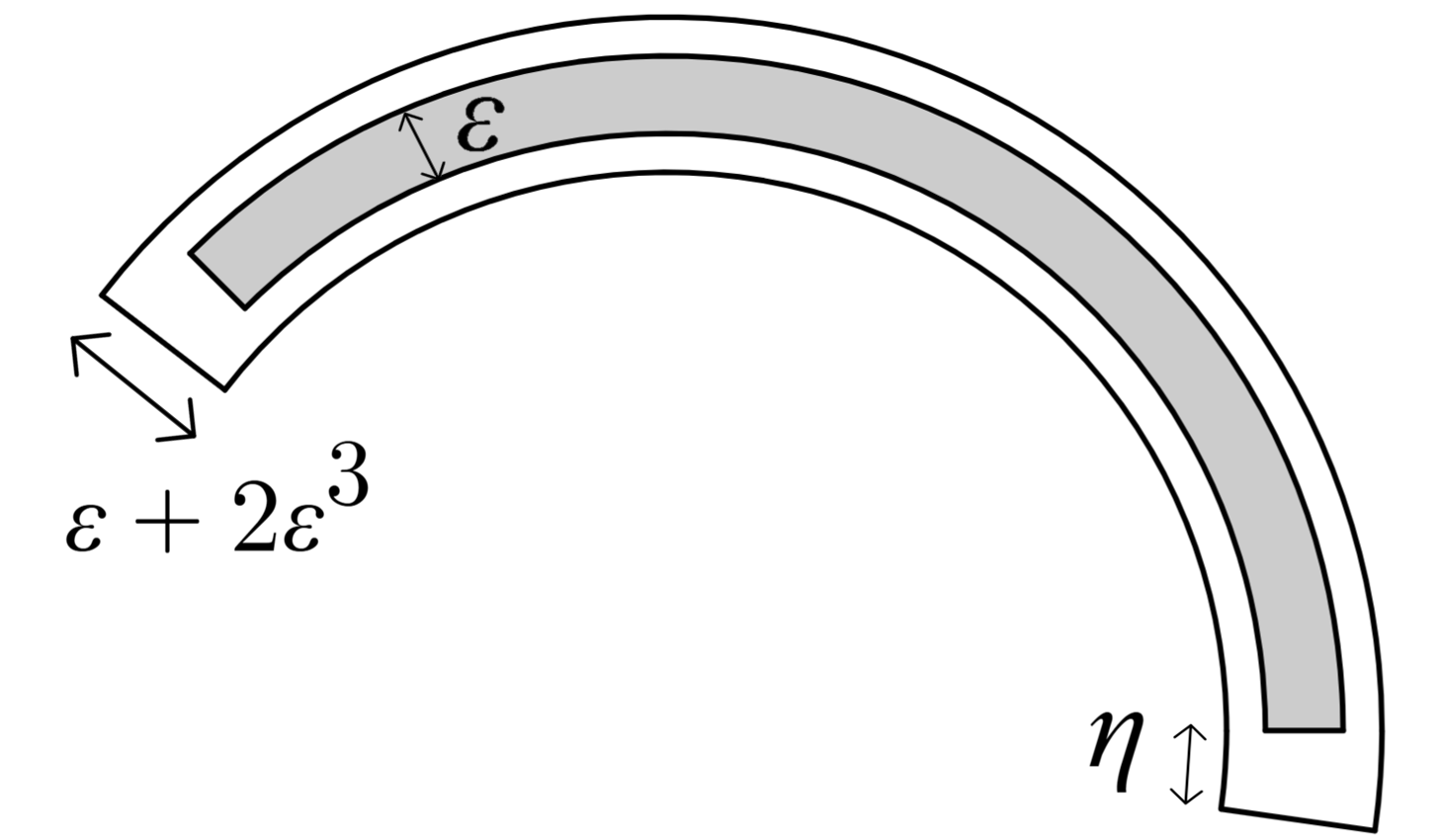}
  \caption{The annular domains $A$ and $B$ from Example \ref{unitcircle}. When these two domains become sufficiently close to each other, the behavior of $\varphi_U$ for any arbitrary domain $U$ with $A\subseteq U\subseteq B$ \Quote{stabilizes} in the sense of (\ref{AB1}) and (\ref{AB2}).}
  \label{3pi4pic}
\end{figure}
In the above example, the angle $3\pi/4$ may be replaced by any other angle $\theta \in (0,2\pi]$ that is uniformly bounded away from $0$. (If $\theta=2\pi$, then we perturb $A=(1,1+\varepsilon)\times (0,2\pi)$ only in the radial direction and take $B=(1-\varepsilon^3,1+\varepsilon+\varepsilon^3)\times (0,2\pi)$.)        
\end{Exm}

\begin{Exm} 
    \label{2sphere}
    \normalfont
    On the $2$-sphere $\mathbb{S}^2$, we may consider spherical triangles, which are determined by three great circles on the sphere. The class of spherical triangles $T\subseteq \mathbb{S}^2$ with angles $\alpha_1,\alpha_2,\alpha_3$ uniformly bounded below by $\alpha>0$ is uniformly inner uniform. We can consider a thin spherical shell $U=(a,b)\times T\subseteq  \mathbb{R}^3$ generated by a spherical triangle, with $b/a\in (1,2]$ and diameter $\text{diam}(T)>d>0$ uniformly bounded below. Then our results imply that ($\varphi_U^2$-VD), ($\varphi_U^2$-PI), and (\ref{HKE}) are uniformly satisfied, with all constants depending only on $\alpha,d>0$.   

    Moreover, by Example 2.3, \cite{chaolsc}, we have the following caricature function for $\varphi_T$:
    \begin{align}
        \label{triangleprofile}
        \varphi_T\asymp \frac{d_1 d_2 d_3 (d_1+d_3)^{\pi/\alpha_2-2}(d_2+d_3)^{\pi/\alpha_1-2}(d_1+d_2)^{\pi/\alpha_3-2}}{\textup{diam}(T)^{\pi/\alpha_1+\pi/\alpha_2+\pi/\alpha_3-2}},
    \end{align}
    where $d_i$ denotes the geodesic distance to the side of $T$ opposite to $\alpha_i$. The implied constants in (\ref{triangleprofile}) depend only on $\alpha$. Thus (\ref{triangleprofile}) yields a caricature function for $\varphi_U$, namely
    $$\varphi_{U}(r,\theta)\asymp \frac{1}{a^{(n-1)/2}}\frac{\min\{r-a,b-r\}}{(b-a)^{3/2}}\varphi_T(\theta).$$
    For domain perturbations, an analog of Theorem \ref{Smain2} holds with $U_k$ replaced by $T$, and with $U_k(\eta)$ replaced by $T(\eta):=\{\theta\in \mathbb{S}^2:\text{dist}(\theta,T)<\eta\}$. 
    
    We discuss, with a specific example, a variation of Theorem \ref{Smain2} where $T(\eta)$ is replaced by some other spherical domain $T_{\eta}$ for which we have good estimates for its eigenfunction. Express $\mathbb{S}^2$ in spherical coordinates as $(\theta,\phi):=(\cos\theta \sin\phi,\sin\theta \sin\phi,\cos\phi)$ where $\theta\in [0,2\pi)=\mathbb{R}/2\pi \mathbb{Z}$ and $\phi\in [0,\pi)$. Consider the spherical triangles
    $$T=\{(\theta,\phi)\in \mathbb{S}^2:\theta \in (0,\theta_1),\phi \in (0,\pi/2)\},$$
    and 
    $$T_{\eta}:=\{(\theta,\phi)\in\mathbb{S}^2:\theta\in (-\eta,\theta_1),\phi\in (0,\pi/2)\}.$$
   See Figure \ref{sphericaltriangles}. The angles of $T$ are $\pi/2,\pi/2$ and $\theta_1$, while the angles of $T_{\eta}$ are $\pi/2,\pi/2,$ and $\theta_1+\eta$. By (\ref{triangleprofile}), we have explicit expressions comparable to $\varphi_T$ and $\varphi_{T_{\eta}}$. The more general Theorem \ref{perturbthm} implies that Theorem \ref{Smain2} holds with $U_k$ and $U_k(\eta)$ replaced by $T$ and $T_\eta$. Moreover, the conclusion of Theorem \ref{Smain2} implies that on any subregion $K\subseteq T$ at scale $\eta$ away from $\{\theta=0\}$ (see Figure \ref{sphericaltriangles}), we have 
   $$\varphi_U\asymp \varphi_{(a,b)\times T}\text{ on }(a+(a-a'),b-(b'-b))\times K.$$
    All of the implied constants depend only on a uniform lower bound on the angle $\theta_1$.

    \begin{figure}
    
  \centering
  \includegraphics[width=0.4\textwidth]{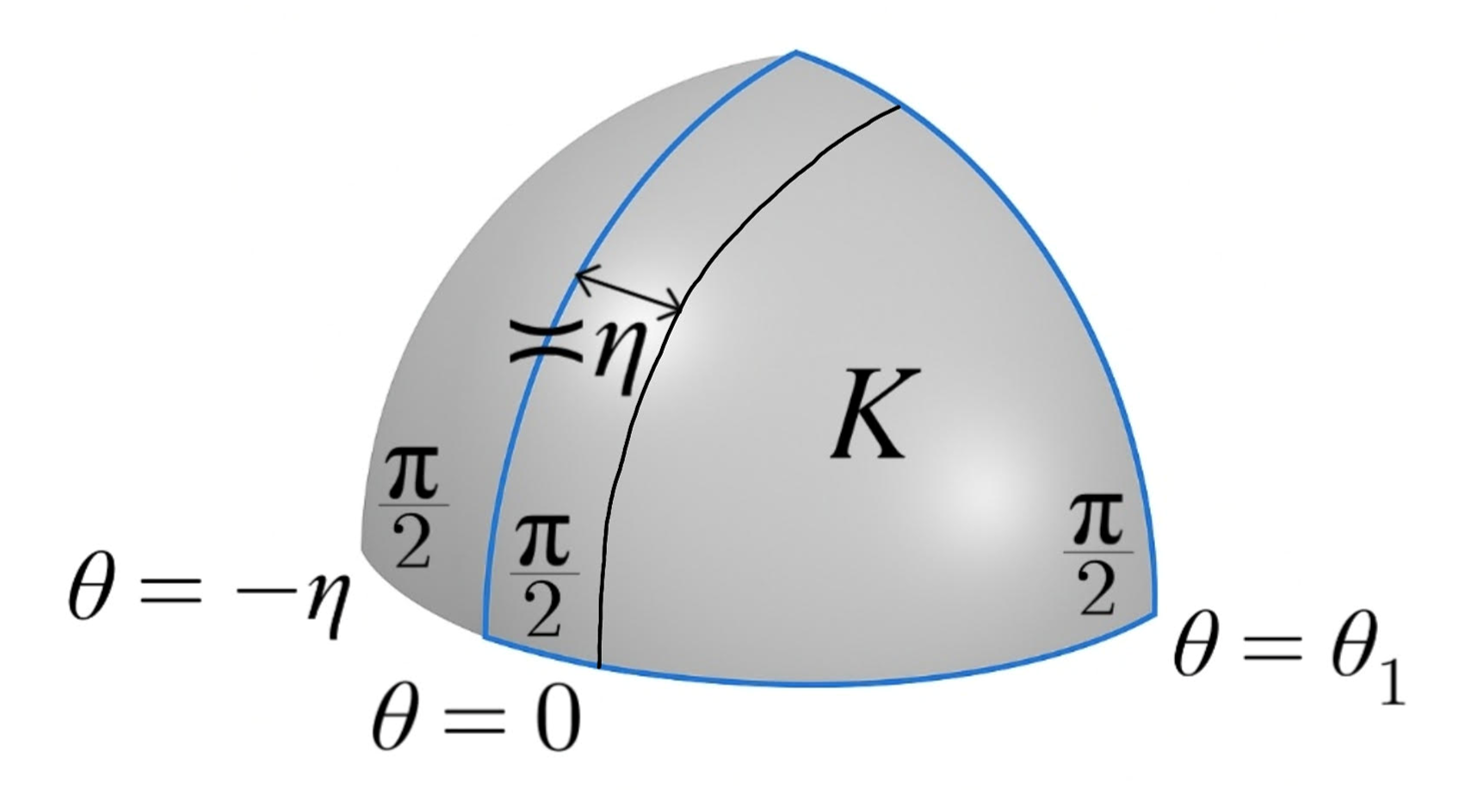}
  \caption{The spherical triangles $T$ and $T_{\eta}$, and a subregion $K\subseteq T$ at distance $\asymp \eta$ away from one side of $T$. The triangle $T_{\eta}$ is the largest spherical triangle depicted above, i.e. the entirety of the gray shape. The triangle $T$ is $T_{\eta}$  with the portion from $\theta=-\eta$ to $\theta =0$ removed.}
  \label{sphericaltriangles}
\end{figure}
\end{Exm}

\begin{Exm} 
    \label{vonkochexample}
    \normalfont
    A Von Koch snowflake in $\mathbb{R}^2$ is an inner uniform domain. This is consequence of the more general fact that \textit{uniform} domains in $\mathbb{R}^n$, such as Euclidean balls, are preserved under quasiconformal mappings, see (Page 384, \cite{martiosarvas}). A direct proof of inner uniformity can be found in Proposition 6.30, \cite{gyryalsc}.

    We can thus embed a copy of the Von Koch snowflake from $\mathbb{R}^2$  into $\mathbb{S}^2$ using some quasiconformal map, such that the image has diameter $\asymp 1$ and such that inner uniformity is preserved. Call the image of this map $\mathcal{V}\mathcal{K}\subseteq \mathbb{S}^2$. Our main results then hold in the case where $U=(a,b)\times \mathcal{V}\mathcal{K}\subseteq \mathbb{R}^3$, with $b/a\in (1,2]$. This yields analogous results as Theorems \ref{Smain1}(a)(b) and \ref{Smain2} for the domain $U$. In particular, a thin spherical shell $U$ with base $\mathcal{V}\mathcal{K}\subseteq \mathbb{S}^2$ satisfies (\ref{HKE}) uniformly.
    
    However, unlike the previous examples, we do not have satisfactory two-sided explicit estimates for $\varphi_{\mathcal{V}\mathcal{K}}$ or $\lambda(\mathcal{V}\mathcal{K})$. The best bound on $\lambda(U)$ our results yield is given by (\ref{Uabeigval}), with $U_k$ replaced by $\mathcal{V}\mathcal{K}$. We point to \cite{LP} and \cite{GL} for some work on estimates of Dirichlet eigenfunctions and eigenvalues on the snowflake in Euclidean space. 
\end{Exm}

\subsection{Notation}
    \label{notation}
$|x|$ is the Euclidean norm of $x\in \mathbb{R}^n$
\\
$A_{a,b}=\{x\in \mathbb{R}^n:a<|x|<b\}$
\\
$X\subseteq Y$ only means $X$ is a subset of $Y$, and does not imply whether or not $X$ can equal $Y$
\\
$dx,dy$, or $dz$ denote integration with respect to Lebesgue measure on $\mathbb{R}^n$ unless otherwise stated
\\
$|U|$ denotes the Lebesgue measure of $U\subseteq \mathbb{R}^n$
\\
$\sigma_{n-1}(U)$ denotes the surface measure of $U\subseteq \mathbb{S}^{n-1}$
\\
$\sigma_{n-1}(\mathbb{S}^{n-1})=2\pi^{n/2}/(\frac{n}{2}-1)!$ is the surface measure of $\mathbb{S}^{n-1}\subseteq \mathbb{R}^n$
\\
$\lambda_k(U)$ is the $k$th Dirichlet eigenvalue of the Dirichlet Laplacian on $U$, counting multiplicity
\\
$\lambda(U),\lambda_U,\lambda_1(U)$ all denote the smallest positive Dirichlet Laplacian eigenvalue of $U\subseteq \mathbb{R}^n$
\\
$A\lesssim B$ denotes $A\leq CB$ where $C$ is some constant depending only on important parameters
\\
$A\asymp B$ denotes $A\lesssim B$ and $B\lesssim A$ 
\\ $A_n\sim B_n$ if $A_n/B_n\to 1$ as $n\to\infty$

We will also adopt the convention that, in a chain of inequalities, the constants can change from line to line.

\subsection{Preliminaries}
\label{preliminaries}

In this section, we gather a number of well-known facts and results for later use. We begin with the following proposition, which records some basic properties of the eigenvalues, eigenfunctions, and heat kernel of the Dirichlet Laplacian; see for example Chapter 10, \cite{grig}.

\begin{Prop}
    \label{lambda}
    Let $(M,\mathbf{g})$ be a Riemannian manifold with Riemannian measure $\mu$. Let $U$ be a precompact domain of $M$. Let $-\Delta_{\mathbf{g}}$ denote the Dirichlet Laplacian on $U$.
    \begin{enumerate}
        \item $-\Delta_{\mathbf{g}}$ admits a nondecreasing sequence of positive eigenvalues $\{\lambda_k\}_{k=1}^{\infty}=\{\lambda_k(U)\}_{k=1}^{\infty}$ with $\lim_{k\to\infty}\lambda_k=\infty$, corresponding to eigenfunctions $\varphi_k\in L^2(U,d\mu)$. The eigenvalues are counted with multiplicity.
        \item The Dirichlet heat kernel $p^D_U(t,x,y)$ can be expressed as
$$p^D_U(t,x,y)=\sum_{k=1}^{\infty}e^{-\lambda_k t}\varphi_k(x)\varphi_k(y),$$
where the series converges absolutely and uniformly on $(\varepsilon,\infty)\times U\times U$ for any $\varepsilon>0$.
        \item The first Dirichlet eigenvalue $\lambda(U):=\lambda_1(U)$ is given by the variational formula
        \begin{align}
        \label{rayleigh}
        \lambda(U)=\inf\Bigg\{\frac{\int_U |\nabla f|_{\mathbf{g}}^2 d\mu}{\int_U f^2 d\mu}:f\in W^{1,2}_0(U),f\neq 0\Bigg\}.
        \end{align}
        \item For any two precompact domains $U_1\subseteq U_2$ of $M$, their first Dirichlet eigenvalues satisfy domain monotonicity $\lambda(U_2)\leq \lambda(U_1)$.
    \end{enumerate}
\end{Prop}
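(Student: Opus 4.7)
The plan is to prove Proposition \ref{lambda} by appealing to standard spectral theory of self-adjoint operators with compact resolvent, as developed for instance in Chapter 10 of \cite{grig}. I would treat the four items roughly in order, since (1) provides the spectral data used by (2), and (3) gives the variational tool used in (4).

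For part (1), I would first set up $-\Delta_{\mathbf{g}}$ on $U$ as the Friedrichs extension of the nonnegative quadratic form $\mathcal{E}(f,f)=\int_U |\nabla f|_{\mathbf{g}}^2 \, d\mu$ with form domain $W^{1,2}_0(U)$. Since $U$ is precompact on a smooth Riemannian manifold, the Rellich--Kondrachov theorem gives the compact embedding $W^{1,2}_0(U) \hookrightarrow L^2(U,d\mu)$. This makes the resolvent $(-\Delta_{\mathbf{g}}+1)^{-1}$ a self-adjoint compact operator on $L^2(U,d\mu)$, and the spectral theorem for compact self-adjoint operators produces an orthonormal basis of eigenfunctions $\{\varphi_k\}$ with eigenvalues $\lambda_k \to \infty$. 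Positivity of $\lambda_k$ follows from the Poincar\'e inequality for $W^{1,2}_0(U)$ on the precompact domain $U$ (positivity of $\mathcal{E}$ away from zero on a domain of finite measure).

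For part (2), I would define the Dirichlet heat semigroup by the functional calculus as $e^{-t(-\Delta_{\mathbf{g}})}f=\sum_k e^{-\lambda_k t}\langle f,\varphi_k\rangle_{L^2}\varphi_k$, and then identify its integral kernel with the Dirichlet heat kernel $p^D_U(t,x,y)$. The pointwise series representation and its absolute/uniform convergence on $(\varepsilon,\infty)\times U\times U$ follow from standard eigenfunction $L^\infty$-bounds: on-diagonal heat kernel estimates give $\sum_k e^{-\lambda_k t}\varphi_k(x)^2 = p^D_U(t,x,x)$, which is finite and locally bounded, and Cauchy--Schwarz upgrades this to uniform convergence once one restricts to $t\geq \varepsilon$ by using $e^{-\lambda_k t} \leq e^{-\lambda_k \varepsilon/2}e^{-\lambda_k t/2}$.

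For part (3), the Rayleigh quotient formula (\ref{rayleigh}) is the standard min-max characterization of the bottom of the spectrum of the Friedrichs extension, so it follows from the construction in (1) together with the density of $C^\infty_c(U)$ in $W^{1,2}_0(U)$. For part (4), given $U_1\subseteq U_2$, any $f\in W^{1,2}_0(U_1)$ extends by zero to a function $\tilde f\in W^{1,2}_0(U_2)$ with the same Dirichlet energy and same $L^2$-norm; plugging $\tilde f$ into the Rayleigh quotient for $U_2$ and taking the infimum over $f\in W^{1,2}_0(U_1)$ yields $\lambda(U_2)\leq \lambda(U_1)$.

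Since all four items are classical and the authors are clearly invoking this proposition as a citation, I do not expect any genuine obstacle; the only mild care is in verifying that the extension-by-zero in (4) genuinely lies in $W^{1,2}_0(U_2)$ without regularity assumptions on $\partial U_1$, which is standard and follows from density of $C^\infty_c$.
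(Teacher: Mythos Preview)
Your proposal is correct and essentially matches what the paper does: the paper gives no proof at all for this proposition, merely citing Chapter 10 of \cite{grig} as the source of these well-known facts. Your sketch is a faithful outline of the standard argument one finds in that reference (Friedrichs extension, Rellich--Kondrachov compactness, spectral theorem for compact self-adjoint operators, Rayleigh quotient, extension by zero), and you correctly anticipated that the authors intend this as a citation rather than a result to be proved.
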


Next, we introduce the geodesic distance $d_U$ on $U$, which enables us to consider the closure $\widetilde{U}$ with respect to $d_U$. This turns $U$ into a metric space with its own intrinsic geometry; balls in $(U,d_U)$ may differ significantly from those defined with respect to the Riemannian distance on the whole space.  

The definitions of the rest of this section are collected from the works \cite{gyryalsc} and \cite{lierllsc}.

\begin{Dfn}
    \normalfont
    Let $U$ be a domain of a Riemannian manifold $(M,\mathbf{g})$. For $x,y\in U$, we write $d_U(x,y)$ to denote the geodesic distance on $U$, i.e. the infimum of lengths of rectifiable curves in $U$ connecting $x$ to $y$.
\end{Dfn}

It is known that (see e.g. Proposition 2.13, \cite{gyryalsc}) the geodesic distance on $U$ may be expressed as
\begin{align}
    \label{variationaldist}
    d_U(x,y)=\sup\{u(x)-u(y):u\in\mathcal{C}(U)\cap \mathcal{F}_{\text{loc}}(U),\enspace |\nabla u|_{\mathbf{g}}\leq 1\},
\end{align}
where 
\begin{align*}
    \mathcal{F}_{\text{loc}}(U):=\{u\in L^2_{\text{loc}}(U,d\mu):\forall\text{ compact }K\subseteq U\text{, }\exists\text{ }u^{\#}\in W^{1,2}(M)\text{ with }u=u^{\#}\big|_K\text{ a.e.}\}.
\end{align*}

\begin{Dfn}
    \normalfont
    \begin{enumerate}
        \item[(i)] We write $\widetilde{U}$ to denote the closure of $U$ with respect to the geodesic distance $d_U$. 
        \item[(ii)] Given an open set $V\subseteq U$, we let $V^{\#}$ be the largest open set in $\widetilde{U}$ which is contained in the closure of $V$ in $\widetilde{U}$ (with respect to $d_U$) and whose intersection with $U$ is $V$.
        \item[(iii)] We denote by $B_U(x,r)\subseteq U$ and $B_{\widetilde{U}}(x,r)\subseteq \widetilde{U}$ the open balls in $U$ and $\widetilde{U}$, respectively.
    \end{enumerate}
\end{Dfn}

\begin{Dfn}
    \label{iudefn}
    \normalfont
    Let $U$ be a domain of a Riemannian manifold $(M,\mathbf{g})$. We say that $U$ is $(C_0,c_0)$\textit{-inner uniform} if there are constants $C_0\in [1,\infty)$, $c_0\in (0,1]$ such that for any $x,y\in U$, there exists a continuous curve $\gamma:[0,1]\to U$ connecting $x$ to $y$ with length at most $C_0d_U(x,y)$, and such that for any $z\in \gamma([0,1])$,
    \begin{align}
        \label{iu1}
        d(z,\partial U)\geq c_0\frac{d_U(x,z)d_U(y,z)}{d_U(x,y)}.
    \end{align}
    We call such a curve a $(C_0,c_0)$\textit{-inner uniform curve}. 
\end{Dfn}
Inner uniform domains were first introduced by Martio and Sarvas \cite{martiosarvas} as well as Jones \cite{jones1}. More properties and examples of inner uniform domains may be found in \cite{martiosarvas}, \cite{jones}, \cite{gyryalsc}, and \cite{lierllsc}.

Because $\max\{d_U(x,z),d_U(y,z)\}\geq d_U(x,y)/2$, it is easily verified that (\ref{iu1}) is equivalent to the condition that
\begin{align}
    \label{iu2}
    d(z,\partial U)\geq \frac{c_0}{2}\min\{d_U(x,z),d_U(y,z)\}.
\end{align}

We now recall a notion of \textit{local inner uniformity} from Definitions 3.7 and 3.10, \cite{lierllsc}.
\begin{Dfn}
    \label{liu}
    \normalfont 
Fix $c_0\in (0,1]$ and $C_0\in [1,\infty)$. For a domain $U$ and $z\in \widetilde{U}$, we define
$$R(U,z):=\sup\Bigg\{R\in \Big(0,\frac{\textup{diam}_U}{8}\Big):\begin{matrix}
    \textup{any two points } x,y\in B_{\widetilde{U}}(z,8R)\text{ can be}
    \\ \textup{ connected by a } (C_0,c_0)\textup{-inner uniform curve in }U
\end{matrix}\Bigg\}.$$
For nonempty domains $V\subseteq U$, we say that $U$ is $(C_0,c_0)$\textit{-locally inner uniform near $V$} if for any point $z\in V^{\#}$, we have $R(U,z)>0$. We say that $U$ is $(C_0,c_0)$\textit{-locally inner uniform up to scale $R>0$ near $V$} if for any $z\in V^{\#}$, $R(U,z)\geq R$.
\end{Dfn}

Consider a domain $U$ that is locally inner uniform, and let $B_{\widetilde{U}}(z,r)$ be a small boundary ball with $z\in \widetilde{U}$. Locally on the boundary ball, let $h>0$ be a \Quote{locally harmonic} function with Dirichlet boundary conditions on $\widetilde{U}$. A rather general boundary Harnack principle from Proposition 5.10, \cite{lierllsc} shows that the ratios of $\varphi_U$ and of $h$ are locally comparable. We first define the class of permissible harmonic functions in Definition \ref{FUV}, then in Proposition \ref{bhp}, we state a simpler version of the boundary Harnack principle due to \cite{lierllsc}. 

\begin{Dfn}
    \label{FUV}
    \normalfont
    For two domains $V\subseteq U$, we define
    $$\mathcal{F}^0_{\text{loc}}(U,V)=\Bigg\{f\in L^2_{\text{loc}}(U):\begin{matrix} \forall W\subseteq V \text{ relatively compact in }\widetilde{U} \text{ with }d_U(W,U\backslash V)>0,\\ \exists f^{\#}\in W^{1,2}_0(U)\text{ such that }f=f^{\#}\text{ a.e. on }W.
    \end{matrix}\Bigg\}$$
\end{Dfn}

\begin{Prop}
    \label{bhp}
    Let $(M^n,\mathbf{g})$ be a complete Riemannian manifold of nonnegative Ricci curvature. Let $U\subseteq (M,\mathbf{g})$ be a bounded domain and $V\subseteq U$. Suppose $U$ is $(C_0,c_0)$-locally inner uniform at scale $R$ near $V$. Let $h$ be any positive harmonic function on $V$ with $h\in \mathcal{F}^0_{\textup{loc}}(U,V)$. There exists constants $A,B\geq 1$ such that for any geodesic ball $B_U(x,r)$ with $r\in (0,R/A)$ and $B_U(x,Ar)\subseteq V^{\#}$, we have for all $y_1,y_2\in B_U(x,r)$,
    \begin{align*}
        \frac{\varphi_U(y_1)}{\varphi_U(y_2)}\leq B\frac{h(y_1)}{h(y_2)}.
    \end{align*}
    The constant $A$ depends only on $C_0,c_0$. The constant $B$ depends only on $C_0,c_0,n$ and an upper bound on $\lambda_UR^2$.
\end{Prop}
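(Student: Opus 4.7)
My plan is to obtain this statement as a direct specialization of the general boundary Harnack principle of Proposition 5.10 in \cite{lierllsc}. That general result applies on locally inner uniform domains to pairs of positive functions that solve a local weak eigenvalue equation with controlled zero-order term and that vanish at the boundary in the $\mathcal{F}^0_{\textup{loc}}$-sense. The work here is essentially to verify that $\varphi_U$ and $h$ occupy the two slots of that comparison, and that the constants we need come out in the stated form.

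For the first slot: $\varphi_U \in W^{1,2}_0(U)$ is globally a Dirichlet eigenfunction, so it automatically lies in $\mathcal{F}^0_{\textup{loc}}(U,V)$ for any $V \subseteq U$, with the global $f^{\#}=\varphi_U$ itself serving as the required $W^{1,2}_0(U)$-extension. It solves $-\Delta \varphi_U = \lambda_U \varphi_U$, and on any ball of radius $Ar \leq R$ the zero-order contribution scales like $\lambda_U R^2$, which is precisely the quantity allowed to enter the constant $B$ in the final statement. For the second slot: $h$ is positive and harmonic on $V$ by assumption, and the hypothesis $h \in \mathcal{F}^0_{\textup{loc}}(U,V)$ encodes the required local Dirichlet boundary condition. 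The geometric hypothesis that $U$ is $(C_0,c_0)$-locally inner uniform at scale $R$ near $V$ provides the inner uniform curves and interior corkscrew points on which the BHP argument of \cite{lierllsc} runs, together with nonnegative Ricci curvature on $M$ ensuring volume doubling and a scale-invariant Harnack inequality on geodesic balls contained in $V^{\#}$.

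Applying Proposition 5.10 of \cite{lierllsc} to the ball $B_U(x,Ar) \subseteq V^{\#}$ then yields the desired ratio comparison on $B_U(x,r)$: $A$ is chosen in terms of $C_0$ and $c_0$ to absorb the inner uniform curve length and the reach required for a Harnack chain, while $B$ additionally depends on $n$ and an upper bound on $\lambda_U R^2$, the latter entering through the perturbation estimates used to treat the eigenvalue zero-order term as a harmonic correction. The main obstacle is cosmetic rather than substantive: translating the precise parameter constraints of Proposition 5.10 of \cite{lierllsc} into the simplified form stated above, and in particular checking that the several dilation and reach parameters used there can be consolidated into the single constant $A$ and the single scale bound $r \leq R/A$ used here. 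No new analytic input beyond \cite{lierllsc} should be required.
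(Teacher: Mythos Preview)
Your proposal is correct and matches the paper's approach exactly: the paper does not prove Proposition~\ref{bhp} but simply states it as a simplified version of the boundary Harnack principle from Proposition~5.10 of \cite{lierllsc}, which is precisely the specialization you outline. Your identification of $\varphi_U$ and $h$ as the two slots, the role of $\lambda_U R^2$ in the constant $B$, and the consolidation of the geometric parameters into $A$ are all in line with how the paper intends the result to be read.
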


\section{Proof of main results}
    \label{mainresults}

\subsection{Laplacian eigenfunctions and eigenvalues on annular domains}
\label{structure}

Consider a bounded domain $U\subseteq \mathbb{R}^n$ of the form 
$$U=(a,b)\times U_0=\{(r,\theta)\in\mathbb{R}^{n}:r\in (a,b),\theta\in U_0\},$$ where $U_0\subseteq \mathbb{S}^{n-1}\subseteq \mathbb{R}^n$ is a region on the $(n-1)$-dimensional sphere. As in the title of Section \ref{structure} , we call such domains $U$ \textit{annular}  because when $U_0=\mathbb{S}^{n-1}$, $U$ concides with an annulus. 

The goals of this section are to give detailed two-sided bounds for $\varphi_U$ and $\lambda_U$ where $U$ is an annular domain. We summarize the main results of this section. In Lemma \ref{lierlcompare}, we find a more explicit expression uniformly comparable to $\varphi_U$. In Proposition \ref{annulieigval}, we find two-sided bounds for $\lambda_U$, uniformly over all annuli $U\subseteq \mathbb{R}^n$. In Lemma \ref{eigvaldecomp}, we do the same for more general annular domains $U$. In Theorem \ref{annulusprofile}, we find a completely explicit expression uniformly comparable to $\varphi_U$ for any annuli $U$. 

In the below, we drop the superscript or subscript \Quote{$U$} when convenient, to simplify notation when no confusion arises. The Dirichlet Laplacian eigenfunctions $\{\varphi_k^U\}_{k=1}^{\infty}$of $U$ can be taken to be of the form $\varphi^U_k(r,\theta)=f_k(r)g_k(\theta)$, where $g_k(\theta)$ are the Dirichlet Laplacian eigenfunctions of $U_0$ with respect to $\Delta_{\mathbb{S}^{n-1}}$. Note that in general, $g_k(\theta)$ need not be the eigenfunction corresponding to $\lambda_k(U_0)$ since the Dirichlet eigenvalues $\lambda_k$ are by convention taken to be in increasing order. However, $g_1(\theta)$ does correspond to $\lambda_1(U_0)$.

The Laplacian $\Delta$ on $\mathbb{R}^n$, in polar coordinates, is given by
\begin{align}
    \label{laplacian}
    \Delta=\frac{1}{r^{n-1}}\frac{\partial }{\partial r}\Bigg(r^{n-1}\frac{\partial }{\partial r}\Bigg)+\frac{1}{r^2}\Delta_{\mathbb{S}^{n-1}}=\frac{\partial ^2}{\partial r^2}+\frac{n-1}{r}\frac{\partial }{\partial r}+\frac{1}{r^2}\Delta_{\mathbb{S}^{n-1}}.
\end{align}
Thus, writing $\varphi_U(r,\theta)=f(r)g(\theta)$, computing $\Delta \varphi_U=\Delta f(r)g(\theta)$, and separating variables, we obtain
\begin{align}
    \nonumber & -\Delta_{\mathbb{S}^{n-1}} g=\lambda(U_0)g, \hspace{1.in}g|_{\partial U_0}=0
    \\  & \label{sepvar}
    -f''-\frac{n-1}{r}f'+\frac{\lambda(U_0)}{r^2}f=\lambda_Uf,\hspace{0.1in}f(a)=f(b)=0.
\end{align}
That is, $g(\theta)$ is simply the principal Dirichlet eigenfunction of $U_0\subseteq \mathbb{S}^{n-1}$. The radial part $f(r)$ of $\varphi_U$ is the principal Dirichlet eigenfunction of a one-dimensional Schr\"{o}dinger operator on an interval $(a,b)$; the eigenvalue of $f(r)$ equals $\lambda_U$.

The following proposition relates the principal Dirichlet eigenfunctions of an annular domain and its radial dilation.

\begin{Prop}
    \label{scaling}
    Let $U=(a,b)\times U_0\subseteq \mathbb{R}^n$ be a domain. For any $c>0$, put $cU:=(ca,cb)\times U_0\subseteq \mathbb{R}^n$. Then 
    \begin{align*}
        \varphi_{cU}(r,\theta)=\frac{1}{c^{n/2}}\varphi_U(r/c,\theta),\hspace{0.2in} \lambda(cU)=\frac{1}{c^2}\lambda(U).
    \end{align*}
\end{Prop}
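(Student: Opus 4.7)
The plan is to exhibit an explicit candidate for $\varphi_{cU}$ by scaling $\varphi_U$, verify directly that it satisfies the eigenvalue equation with the claimed eigenvalue and is correctly normalized, and then invoke uniqueness of the positive normalized principal eigenfunction to conclude. This is a routine dilation argument; the only mild care needed is to track the factor $r^{n-1}$ in the polar Lebesgue measure correctly so that the prefactor $c^{-n/2}$ produces the right $L^2$ norm.

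Concretely, I would set $\psi(r,\theta) := c^{-n/2}\varphi_U(r/c,\theta)$ for $(r,\theta)\in cU$ and first check boundary conditions. If $r\in\{ca,cb\}$ then $r/c\in\{a,b\}$, and if $\theta\in\partial U_0$, then in either case $\varphi_U(r/c,\theta)=0$, so $\psi$ vanishes on $\partial(cU)$. Positivity $\psi>0$ on $cU$ is immediate from $\varphi_U>0$ on $U$.

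Next I would apply the polar form \eqref{laplacian} of the Laplacian to $\psi$. Each $r$-derivative of $\varphi_U(r/c,\theta)$ produces a factor $1/c$, while $\Delta_{\mathbb{S}^{n-1}}$ acts only on the angular variable. Writing $r=c(r/c)$ one sees $\frac{n-1}{r}\cdot\frac{1}{c}=\frac{1}{c^2}\cdot\frac{n-1}{r/c}$ and $\frac{1}{r^2}=\frac{1}{c^2(r/c)^2}$, so every term of $\Delta\psi$ carries a common factor $c^{-2}$. Collecting,
$$\Delta\psi(r,\theta)=\frac{1}{c^2}\cdot c^{-n/2}(\Delta\varphi_U)(r/c,\theta)=-\frac{\lambda_U}{c^2}\,\psi(r,\theta),$$
so $\psi$ is a Dirichlet eigenfunction of $cU$ with eigenvalue $\lambda_U/c^2$.

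Finally I would verify the $L^2$ normalization by the change of variables $u=r/c$ in polar coordinates, where Lebesgue measure reads $r^{n-1}dr\,d\sigma(\theta)$. The factor $c^{-n}$ from $\psi^2$ is exactly cancelled by the Jacobian $c^{n-1}\cdot c=c^n$ arising from $r^{n-1}dr\mapsto c^{n-1}u^{n-1}\cdot c\,du$, so $\|\psi\|_{L^2(cU)}^2=\|\varphi_U\|_{L^2(U)}^2=1$. Since $\psi>0$ and the principal Dirichlet eigenfunction is characterized (up to sign) as the unique positive $L^2$-normalized eigenfunction, we must have $\psi=\varphi_{cU}$, and consequently $\lambda(cU)=\lambda_U/c^2$, as claimed.
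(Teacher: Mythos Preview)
Your proposal is correct and follows essentially the same approach as the paper, which simply states that the proof amounts to checking that the proposed expression for $\varphi_{cU}$ satisfies the eigenvalue equation (\ref{sepvar}) with $(a,b)$ replaced by $(ca,cb)$. You have filled in the routine details (boundary conditions, the dilation computation on $\Delta$, $L^2$ normalization, and uniqueness of the positive eigenfunction) that the paper omits.
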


The proof of Proposition \ref{scaling} is just checking that the proposed expression for $\varphi_{cU}$ satisfies (\ref{sepvar}), with $(a,b)$ replaced by $(ca,cb)$.

We aim to show in Lemma \ref{lierlcompare} that $f(r)$ is comparable to the Dirichlet Laplacian eigenfunction of the interval $(a,b)$. To accomplish this, we use a result of Lierl and Saloff-Coste (Theorem 7.13, \cite{lierllsc}). In a simplified form, their result roughly states the following. Consider two second-order differential operators $L_1=\Delta$ and $L_2=\Delta+L_2'$, where $\Delta$ is the Laplacian (or even a second-order uniformly elliptic operator), and where $L_2'$ are the lower order terms of $L_2$. Consider the operators $L_1$ and $L_2$ on a bounded inner uniform domain of $\mathbb{R}^n$ with Dirichlet boundary conditions. Then if $\varphi_{L_i}$ denotes the ground state eigenfunction of $L_i$, then $\varphi_{L_1}\asymp \varphi_{L_2}$. 

However, we describe one small difficulty that arises from trying to apply Theorem 7.13, \cite{lierllsc}. While the differential operator on the left-hand side of (\ref{sepvar}) shares the same second-order part as the Laplacian $-\Delta f=-f''$ on a bounded interval, the reference measure for (\ref{sepvar}) is $r^{n-1}dr$, while it is just Lebesgue measure $dr$ for the Laplacian. Hence, Theorem 7.13, \cite{lierllsc} does not apply to our situation directly. Before stating Lemma \ref{lierlcompare}, we first prove Lemma \ref{noweight}, which removes the weight $r^{n-1}$ from $r^{n-1}dr$. This in turn will allow us to apply Theorem 7.13, \cite{lierllsc} in Lemma \ref{lierlcompare}.

\begin{Lemma}   
    \label{noweight}
    Let $I=(a,b)$ be a bounded interval with $0<a<b<\infty$. Let $U=I\times U_0\subseteq \mathbb{R}^n$ and let $\varphi_U(r,\theta)=f(r)g(\theta)$ be the principal Dirichlet Laplacian eigenfunction of $U$, normalized so that $\|\varphi_U\|_{L^2(U)}=1$. Then $f(r)=r^{-(n-1)/2}\widetilde{f}(r)$, where $\widetilde{f}(r)$ (normalized so that $\|\widetilde{f}\|_{L^2(I,dr)}=1$) is the eigenfunction associated to the eigenvalue
    \begin{align}   
        \label{noweight1}
        \lambda=\inf\Big\{\int_I\phi'(r)^2+\frac{\alpha+\lambda(U_0)}{r^2}\phi(r)^2dr:\phi\in C^{\infty}_c(I),\int_I \phi^2 dr=1\Big\},
    \end{align}
    where $\alpha =(n-3)(n-1)/4$. In fact, $\lambda=\lambda_U$.
\end{Lemma}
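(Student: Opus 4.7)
The plan is to establish the relation $f(r) = r^{-(n-1)/2}\widetilde{f}(r)$ by a direct substitution into the radial ODE (\ref{sepvar}), which will reveal that the first-order term $-\frac{n-1}{r}f'$ is exactly canceled at the cost of modifying the potential. Concretely, I would compute $f'$ and $f''$ under this ansatz, plug them into the equation $-f''-\frac{n-1}{r}f'+\frac{\lambda(U_0)}{r^2}f = \lambda_U f$, and collect terms. The cross-terms involving $\widetilde{f}'$ cancel, while the $1/r^2$ coefficients combine to produce the additive constant $\alpha = (n-1)(n-3)/4$ in the potential. The outcome is the one-dimensional Schrödinger equation
\begin{equation*}
-\widetilde{f}''(r) + \frac{\alpha + \lambda(U_0)}{r^2}\widetilde{f}(r) = \lambda_U \widetilde{f}(r), \qquad \widetilde{f}(a) = \widetilde{f}(b) = 0,
\end{equation*}
so $\widetilde{f}$ is an eigenfunction of the Schrödinger operator $-\partial_r^2 + (\alpha + \lambda(U_0))/r^2$ on $I$ with Dirichlet boundary conditions, corresponding to the eigenvalue $\lambda_U$.

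Next, I would check that the $L^2$-normalization transfers correctly. Writing the normalization of $\varphi_U$ in polar coordinates and using that $g$ is normalized in $L^2(U_0)$ (the principal eigenfunction of $U_0$), the condition $\|\varphi_U\|_{L^2(U)} = 1$ becomes $\int_a^b f(r)^2 r^{n-1}\, dr = 1$. Substituting $f(r) = r^{-(n-1)/2}\widetilde{f}(r)$ the weight $r^{n-1}$ cancels exactly, giving $\|\widetilde{f}\|_{L^2(I,dr)} = 1$. This is precisely the disappearance of the weight that motivates the whole substitution.

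To identify $\lambda_U$ with the infimum in (\ref{noweight1}), I would invoke the standard Rayleigh–Ritz principle: since $a > 0$, the potential $(\alpha + \lambda(U_0))/r^2$ is bounded on $[a,b]$, so the operator $-\partial_r^2 + (\alpha + \lambda(U_0))/r^2$ with Dirichlet boundary conditions on $I$ is a self-adjoint, bounded-below operator whose smallest eigenvalue is characterized by the Rayleigh quotient on the right-hand side of (\ref{noweight1}). Since $\varphi_U$ is the principal Dirichlet Laplacian eigenfunction, $f$ is positive on $I$, hence so is $\widetilde{f} = r^{(n-1)/2} f$; thus $\widetilde{f}$ is the principal (ground state) eigenfunction of this Schrödinger operator, and its eigenvalue $\lambda_U$ is necessarily the infimum in the variational formula. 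This gives $\lambda = \lambda_U$.

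There is no real obstacle here: the proof is a routine change of variables in an ODE together with a one-line Rayleigh quotient observation. The most delicate point is simply bookkeeping the constant $\alpha = (n-1)(n-3)/4$ correctly in the substitution computation; a sign error in the first-derivative terms would change the value of $\alpha$. Once this algebraic step is completed, identifying $\widetilde{f}$ as the ground state and matching normalizations is immediate, so the lemma follows.
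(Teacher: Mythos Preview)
Your proposal is correct and follows essentially the same approach as the paper: the substitution $\widetilde{f}=r^{(n-1)/2}f$ is exactly the unitary map $\mathcal{U}:L^2(I,r^{n-1}dr)\to L^2(I,dr)$ the paper uses, and your direct ODE computation is the concrete realization of the identity $\widetilde{\Delta}_{\text{radial}}=\mathcal{U}\Delta_{\text{radial}}\mathcal{U}^{-1}$. The only cosmetic difference is that the paper packages the argument as a unitary conjugation (so preservation of the ground state and its eigenvalue is automatic), whereas you spell out the positivity and Rayleigh--Ritz reasoning explicitly.
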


\begin{proof}
    The idea of the proof is to use a unitary map to transform $r^{n-1}dr$ to $dr$. On the interval $I:=(a,b)$, consider the unitary map $\mathcal{U}:L^2(I,r^{n-1}dr)\to L^2(I,dr)$ given by $\mathcal{U}f(r)=r^{(n-1)/2}f(r)$. Also define the operators
    \begin{align*}
        & \Delta_{\text{radial}}f=\frac{1}{r^{n-1}}\frac{\partial }{\partial r}\Big(r^{n-1}\frac{\partial f}{\partial r}\Big)=\frac{\partial ^2f}{\partial r^2}+\frac{n-1}{r}\frac{\partial f}{\partial r}
        \\ & \widetilde{\Delta}_{\text{radial}}f=\frac{\partial^2 f}{\partial r^2}-\frac{(n-1)(n-3)}{4r^2}f
    \end{align*}
    A direct calculation shows that $\widetilde{\Delta}_{\text{radial}}=\mathcal{U}\Delta_{\text{radial}}\mathcal{U}^{-1}$. This implies that $f$ solves (\ref{sepvar}) if and only if $\mathcal{U}f$ satisfies
    \begin{align}
        \label{Uf}
        -\widetilde{\Delta}_{\text{radial}}\mathcal{U}f+\frac{\lambda(U_0)}{r^2}\mathcal{U}f=\lambda_U\cdot \mathcal{U}f,\hspace{0.1in}\mathcal{U}f(a)=\mathcal{U}f(b)=0.
    \end{align}
    Hence $\mathcal{U}f$ is the principal Dirichlet eigenfunction of $-\widetilde{\Delta}_{\text{radial}}+\lambda(U_0)/r^2$ on $(a,b)$, which corresponds exactly to the eigenvalue $\lambda$ from (\ref{noweight}). (We thus deduce from (\ref{Uf}) that the expression in (\ref{noweight1}) in fact equals $\lambda_U$.) Defining $\widetilde{f}:=\mathcal{U}f$ so that $f=\mathcal{U}^{-1}\widetilde{f}$, the proof is complete.  
\end{proof}

In Lemma \ref{lierlcompare}, we argue that in the regime where $b/a\in (1,2]$, the radial part of $\varphi_U(r,\theta)$ is comparable to the principal Dirichlet Laplacian eigenfunction of $(a,b)$, 

\begin{Lemma}
    \label{phiapprox}
    \label{lierlcompare} Suppose $b/a\in (1,2]$. Let $U\subseteq \mathbb{R}^n$ be a domain of the form $U=(a,b)\times U_0\subseteq (0,\infty)\times \mathbb{S}^{n-1}$ and let $\varphi_U(r,\theta)$ be the principal Dirichlet Laplacian eigenfunction of $U$ normalized so that $\|\varphi_U\|_{L^2(U)}=1$. Then $$\varphi_U(r,\theta)\asymp \frac{1}{a^{(n-1)/2}}\sqrt{\frac{2}{b-a}}\cos\Big(\frac{\pi (r-\frac{a+b}{2})}{b-a}\Big)g(\theta)\asymp  \frac{1}{a^{(n-1)/2}}\frac{\min\{r-a,b-r\}}{(b-a)^{3/2}}g(\theta),$$
     where $g:U_0\to [0,\infty)$ is the principal Dirichlet eigenfunction of $\Delta_{\mathbb{S}^{n-1}}$ on $U$ normalized so that $\|g\|_{L^2(U_0)}=1$. The implied constants depend only on the dimension $n$ and a uniform upper bound on $\lambda(U_0)$.
\end{Lemma}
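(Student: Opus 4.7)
The plan is to reduce to a one-dimensional eigenfunction comparison on a normalized interval and invoke Theorem 7.13 of \cite{lierllsc}. First, by Proposition \ref{scaling} applied with $c = 1/a$, we may replace $U = (a,b)\times U_0$ by $cU = (1, b/a) \times U_0$ and rescale $\varphi_U$ accordingly. The new interval $I := (1, b/a)$ lies inside $(1, 2)$ with length at most $1$, which is the key point allowing us to bound subsequent quantities uniformly. Once we establish the claim on this normalized interval, the $a^{-(n-1)/2}$ prefactor in the target expression will emerge naturally upon un-dilating.

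Working on $I$, I would next invoke Lemma \ref{noweight} to write $\varphi_{cU}(r, \theta) = r^{-(n-1)/2} \widetilde{f}(r) g(\theta)$, where $g$ is the principal Dirichlet eigenfunction of $\Delta_{\mathbb{S}^{n-1}}$ on $U_0$ (normalized in $L^2(U_0)$), and $\widetilde{f}$ is the principal Dirichlet eigenfunction on $I$, normalized in $L^2(I, dr)$, of the Schr\"{o}dinger operator $L := -\partial_r^2 + V(r)$ with $V(r) = (\alpha + \lambda(U_0))/r^2$ and $\alpha = (n-3)(n-1)/4$. Since $r \in (1, 2)$, the factor $r^{-(n-1)/2}$ is bounded away from both $0$ and $\infty$, and $V \in L^\infty(I)$ with $\|V\|_\infty$ controlled by $n$ and the assumed upper bound on $\lambda(U_0)$.

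The crucial step is to compare $\widetilde{f}$ with the principal Dirichlet Laplacian eigenfunction $h$ on $I$, namely $h(r) = \sqrt{2/(b/a - 1)}\,\cos(\pi(r - (1+b/a)/2)/(b/a - 1))$, also normalized in $L^2(I, dr)$. Both $\widetilde{f}$ and $h$ are ground states of second-order operators sharing the same principal part $-\partial_r^2$ and differing only by the bounded lower-order term $V$. Since $I$ is trivially inner uniform with absolute constants, Theorem 7.13 of \cite{lierllsc} applies and yields $\widetilde{f}(r) \asymp h(r)$ on $I$, with comparison constants depending only on $n$ and $\|V\|_\infty$, hence only on $n$ and the upper bound on $\lambda(U_0)$. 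Combining this with the bounded factor $r^{-(n-1)/2}$, we conclude $\varphi_{cU}(r, \theta) \asymp h(r) g(\theta)$ on $(1, b/a) \times U_0$.

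Un-dilating via Proposition \ref{scaling} transforms this estimate into the first form of the lemma statement, with the $a^{-(n-1)/2}\sqrt{2/(b-a)}$ prefactor arising from combining the $a^{-n/2}$ Jacobian with $\sqrt{2/(b/a-1)} = \sqrt{2a/(b-a)}$, and with the cosine argument becoming $\pi(r - (a+b)/2)/(b-a)$. The second comparison then follows from the elementary estimate $\cos(\pi u) \asymp \min\{1/2 - u, 1/2 + u\}$ for $u \in (-1/2, 1/2)$ applied with $u = (r - (a+b)/2)/(b-a)$. The principal obstacle is checking that the comparison constants from Theorem 7.13 of \cite{lierllsc} are genuinely uniform in our setting; the preliminary dilation is precisely what makes this possible, since on the original interval $(a,b)$ the potential $V(r) = (\alpha + \lambda(U_0))/r^2$ would blow up as $a \to 0$, whereas on $(1, b/a) \subseteq (1, 2)$ it is controlled by $n$ and $\sup \lambda(U_0)$ alone.
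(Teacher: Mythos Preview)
Your proposal is correct and follows essentially the same approach as the paper: normalize via Proposition \ref{scaling} to the interval $(1,b/a)\subseteq (1,2)$, apply Lemma \ref{noweight} to strip the weight $r^{n-1}$, then invoke Theorem 7.13 of \cite{lierllsc} to compare the Schr\"odinger ground state with the plain Dirichlet eigenfunction of the interval, and finally undo the dilation. The only cosmetic difference is that the paper normalizes at the start and scales back at the end of the written proof, while you frame the dilation as the first conceptual step; the paper also tracks the dependence of the comparison constant through the product $\|V\|_\infty\cdot\textup{diam}(I)^2$ rather than $\|V\|_\infty$ alone, but since $|I|\le 1$ after normalization these are equivalent here.
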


\begin{proof} First assume $a=1$ and $b\in (1,2]$. We may express $\varphi_U(r,\theta)=f(r)g(\theta)$, where $f,g$ are chosen so that $\int_1^{b}f(r)^2r^{n-1}dr =1$ and $\int_{U_0}g(\theta)^2d\sigma_{n-1}(\theta)=1$.

On the Sobolev space $W_0^{1,2}(a,b)$, consider the Dirichlet forms
\begin{align}
    \label{bilinear}
    \mathcal{E}_1(f,g)=\int_{a}^{b}f'(r)g'(r)dr,\hspace{0.2in}\mathcal{E}_2(f,g)=\int_a^{b}f'(r)g'(r)+\frac{\alpha+\lambda(U_0)}{r^2}f(r)g(r)dr,
\end{align}
where $\alpha=(n-3)(n-1)/4$.
(In (\ref{bilinear}), we abuse notation and write $f,g$ to denote generic functions $f,g\in W_0^{1,2}(1,b)$ even though $f$ and $g$ were already defined.) For $i=1,2$, we also let $\widetilde{f}_i$ denote the first eigenfunction of the differential operator associated with $\mathcal{E}_i(\cdot,\cdot)$. In particular, $\widetilde{f}_1(r)$ is the principal Dirichlet Laplacian eigenfunction of the interval $(a,b)$, and it is given explicitly by
\begin{align*}
    \widetilde{f}_1(r):=\sqrt{\frac{2}{b-a}}\cos\Big(\frac{\pi(r-\frac{a+b}{2})}{b-a}\Big)\asymp \frac{\min\{x-a,b-x\}}{(b-a)^{3/2}}.\end{align*}
On the other hand, by Lemma \ref{noweight}, $f(r)=r^{-(n-1)/2}\widetilde{f}_2(r)$, so $f(r)\asymp \widetilde{f}_2(r)$ because $r\in [1,2]$. 

We now apply Theorem 7.13, \cite{lierllsc}, which implies the principal Dirichlet eigenfunctions of the forms $\mathcal{E}_1$ and $\mathcal{E}_2$ (each normalized so that $\int_{a}^{b}\widetilde{f}_i(r)^2dr =1$) are comparable, that is,
\begin{align}
    \label{1.02}
    \frac{1}{A}\cdot \widetilde{f}_1(r)\leq \widetilde{f}_2(r)\leq A\cdot \widetilde{f}_1(r),\hspace{0.1in} r\in [a,b],
\end{align}
where $A\geq 1$ is a constant depending only on an upper bound on
$$\frac{\alpha+\lambda(U_0)}{r^2}\cdot \text{diam}((1,b))^2\leq (\alpha+\lambda(U_0))\cdot \Big(\frac{b}{a}-1\Big)^2.$$
(This claim about the constant $A$ is just by carefully keeping track of the constants and definitions in \cite{lierllsc}. Indeed, in Theorem 7.13, \cite{lierllsc}, we can take the parameters $C_8(\mathcal{E})$ and $R$ to be comparable to $(\alpha+\lambda(U_0))r^{-2}$ and $\text{diam}((1,b))$, respectively.) By assumption, $1\leq b/a\leq 2$. Therefore, we can assume $A$ depends only on the the dimension $n$ and an upper bound on $\lambda(U_0)$. 

Using (\ref{1.02}) and the earlier observation that $f(r)\asymp \widetilde{f}_2(r)$, we get 
\begin{align}
    \label{1.03}\varphi_U(r,\theta)=f(r)g(\theta)\asymp_{n,A}\sqrt{\frac{2}{b-1}}\cos\Big(\frac{\pi(r-\frac{1+b}{2})}{b-1}\Big)g(\theta)\asymp\frac{\min\{r-1,b-r\}}{(b-1)^{3/2}}g(\theta).
\end{align}
This proves the desired result when $a=1$. For arbitrary $(a,b)$ with $b/a\in (1,2]$, we use scaling properties of Laplacian eigenfunctions (Proposition \ref{scaling}) to get
$$\varphi_U(r,\theta)=\varphi_{(a,b)\times U_0}(r,\theta)=\varphi_{a(1,b/a)\times U_0}(r,\theta)=\frac{1}{a^{n/2}}\varphi_{(1,b/a)\times U_0}(r/a,\theta),$$
and we finish the proof by using (\ref{1.03}), substituting $b/a,r/a$ for $b,r$.
\end{proof}

Next, in Proposition \ref{annulieigval}, we give explicit upper and lower bounds for the first Dirichlet eigenvalue $\lambda(A)$ for an annulus $A_{a,b}=\{x\in \mathbb{R}^n:a<|x|<b\}$, uniformly for any choices of $0<a<b<\infty$. In particular, Proposition \ref{annulieigval} shows that $\lambda(A_{a,b})\asymp_n (b-a)^{-2}$, and moreover
$$\lambda(A_{a,b})\sim \frac{\pi^2}{(b-a)^2},\enspace \text{ as }\enspace b/a\to 1.$$
Proposition \ref{annulieigval} will not be used outside of this subsection, but may be of some independent interest; it is also mentioned in Theorem \ref{illustration}(c). 

\begin{Prop}
    \label{annulieigval}
    Let $\lambda(A_{a,b})$ be the first Dirichlet Laplacian eigenvalue of $A_{a,b}\subseteq \mathbb{R}^n$. Let
    \begin{align}
        \label{C1C2}
        &C_1(n,x):=\begin{cases}
             \max\Big\{\frac{n\pi^2}{4}\Big(1-\frac{1}{x}\Big)^2,\pi^2+\frac{(n-1)(n-3)}{4}\Big(1-\frac{1}{x}\Big)^2\Big\}, & n\geq 3
             \\  \max\Big\{\frac{n\pi^2}{4}\Big(1-\frac{1}{x}\Big)^2,\pi^2+\frac{(n-1)(n-3)}{4}(x-1)^2\Big\} & n = 2
        \end{cases}
        \\ \nonumber & C_2(n,x):= \begin{cases}
            \min\Big\{(n\pi)^2,\pi^2+\frac{(n-1)(n-3)}{4}(x-1)^2\Big\}, & n \geq 3
            \\   \min\Big\{(n\pi)^2,\pi^2+\frac{(n-1)(n-3)}{4}\Big(1-\frac{1}{x}\Big)^2\Big\}, & n = 2.
        \end{cases}
    \end{align}
    For any dimension $n\geq 2$ and $0<a<b<\infty$,
    \begin{align}
        \label{annulieigval1} \frac{C_1(n,b/a)}{(b-a)^2}\leq  \lambda(A_{a,b})\leq \frac{C_2(n,b/a)}{(b-a)^2}. 
    \end{align}
\end{Prop}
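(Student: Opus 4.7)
The strategy is to reduce to a one-dimensional Dirichlet eigenvalue problem on $(a,b)$ and then obtain each of the two terms in the $\max$ and $\min$ appearing in $C_1$ and $C_2$ by combining Rayleigh-quotient computations with simple containment comparisons against Euclidean balls. Since the first Dirichlet eigenfunction of the annulus $A_{a,b}$ is radial, separation of variables (or Lemma \ref{noweight} applied with $U_0=\mathbb{S}^{n-1}$ and the natural convention that the spherical Laplacian principal eigenvalue equals $0$) gives
\begin{equation*}
\lambda(A_{a,b}) \;=\; \inf_{\phi \in W_0^{1,2}(a,b)\setminus\{0\}} \frac{\int_a^b [\phi'(r)^2 + V(r)\phi(r)^2]\,dr}{\int_a^b \phi(r)^2\,dr}, \qquad V(r) := \frac{(n-1)(n-3)}{4r^2}.
\end{equation*}

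The \emph{Poincar\'e-type terms} --- the second argument of $\max$ in $C_1$ and the second argument of $\min$ in $C_2$ --- would follow from the one-dimensional Poincar\'e inequality $\int \phi'^2 \geq \pi^2/(b-a)^2\,\int \phi^2$ combined with monotone pointwise bounds on $V$. For the lower bound, I would bound $V$ from below by $\min_{[a,b]}V$, which equals $V(b)=(n-1)(n-3)/(4b^2)$ for $n\geq 3$ (where $V$ is decreasing) and $V(a)=-1/(4a^2)$ for $n=2$ (where $V<0$ is increasing). For the upper bound, I would take the test function $\phi(r)=\sin(\pi(r-a)/(b-a))$: its kinetic contribution is exactly $\pi^2/(b-a)^2$, while the potential contributes at most $\max_{[a,b]}V$, equal to $V(a)$ for $n\geq 3$ and $V(b)$ for $n=2$. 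Rewriting $1/a^2 = (x-1)^2/(b-a)^2$ and $1/b^2 = (1-1/x)^2/(b-a)^2$ then yields the stated expressions in both cases.

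The \emph{ball-comparison terms} --- the first argument of $\max$ in $C_1$ and the first argument of $\min$ in $C_2$ --- would follow from domain monotonicity, using that $A_{a,b}\subset B(0,b)$ while $A_{a,b}$ contains any Euclidean ball $B(z_0,(b-a)/2)$ with $|z_0|=(a+b)/2$. These inclusions give
\begin{equation*}
\frac{j_{(n-2)/2,1}^2}{b^2} \;=\; \lambda(B(0,b)) \;\leq\; \lambda(A_{a,b}) \;\leq\; \lambda\bigl(B(z_0,(b-a)/2)\bigr) \;=\; \frac{4\,j_{(n-2)/2,1}^2}{(b-a)^2}.
\end{equation*}
Combined with the classical Bessel zero estimates $n\pi^2/4 \leq j_{(n-2)/2,1}^2 \leq (n\pi/2)^2$ (i.e.\ $(\nu+1)\pi^2/2 \leq j_{\nu,1}^2 \leq (\nu+1)^2\pi^2$ with $\nu=(n-2)/2$) and the identity $1/b^2=(1-1/x)^2/(b-a)^2$, this delivers the remaining two bounds. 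The main obstacle is justifying these Bessel zero estimates: both follow by Sturm oscillation comparison between the normalized Bessel equation $z''(x)+(1-(\nu^2-1/4)/x^2)z(x)=0$, satisfied by $z(x)=\sqrt{x}\,J_\nu(x)$, and the reference equation $y''+y=0$, supplemented by a small amount of direct checking for small $\nu$. Everything else is routine Rayleigh-quotient manipulation.
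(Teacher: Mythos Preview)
Your argument is correct and, for the Poincar\'{e}-type terms (the second arguments in the $\max$ and $\min$), it is exactly what the paper does: reduce to the one-dimensional weighted problem via Lemma~\ref{noweight}, bound the potential $V(r)=(n-1)(n-3)/(4r^2)$ pointwise from above and below on $[a,b]$, and invoke the interval eigenvalue $\pi^2/(b-a)^2$.

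For the comparison terms (the first arguments in the $\max$ and $\min$) you take a slightly different route. The paper compares with \emph{cubes} rather than balls: it inscribes a cube of side $(b-a)/\sqrt{n}$ in the ball $B(z_0,(b-a)/2)\subset A_{a,b}$ to get $\lambda(A_{a,b})\le (n\pi)^2/(b-a)^2$, and it encloses $A_{a,b}$ in a cube of side $2b$ to get $\lambda(A_{a,b})\ge n\pi^2/(2b)^2$. This yields the same constants as your ball comparison combined with the Bessel-zero inequalities $n\pi^2/4\le j_{(n-2)/2,1}^2\le (n\pi/2)^2$, but the cube route is more self-contained: cube eigenvalues are explicit, so no Sturm comparison or case-checking for small $\nu$ is needed. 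Your approach has the minor advantage of being sharper in principle (ball eigenvalues are closer to the truth than cube eigenvalues), but since the final constants $C_1,C_2$ coincide either way, the cube argument is the cleaner choice here.
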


\begin{proof}
   We first use domain monotonicity of the Dirichlet eigenvalue (Proposition \ref{lambda}) to get upper and lower bounds on $\lambda(A_{a,b})$. Let $p=((a+b)/2,0,...,0)\in A_{a,b}$, and note that the Euclidean ball $B(p,(b-a)/2)\subseteq A_{a,b}$. Since any Euclidean ball of radius $R>0$ contains a $n$-dimensional cube $C$ of side length $2R/\sqrt{n}$, domain monotonicity of Dirichlet eigenvalue implies
   \begin{align}
        \label{annulieigval2}
       \lambda(A_{a,b})\leq \lambda(C)=n\cdot \frac{\pi^2}{(2R/\sqrt{n})^2}=\frac{n^2\pi^2}{(b-a)^2}.
   \end{align}
   Since $A_{a,b}$ is also contained inside a cube of side length $2b$, we get
   \begin{align}
       \label{annulieigval3}
       \lambda(A_{a,b})\geq \frac{n\pi^2}{(2b)^2}.
   \end{align}
    To sharpen the already obtained upper and lower bounds, we argue using Rayleigh quotients. First, we note that because $\varphi_{A_{a,b}}$ is a radial function, we have
    $$\lambda(A_{a,b})=\inf_{f\in W^{1,2}_0(A_{a,b})}\frac{\int_{A_{a,b}}|\nabla f|^2 dx}{\int_{A_{a,b}} f^2 dx}=\inf\Big\{\frac{\int_{A_{a,b}}|\nabla f|^2 dx}{\int_{A_{a,b}} f^2 dx}:f\text{ radial}\Big\}=\inf_{f\in C^{\infty}_c(a,b)}\frac{\int_a^{b}f'(r)^2r^{n-1}dr}{\int_a^{b}f(r)^2r^{n-1}dr}.$$
    Consider $g(r):=r^{(n-1)/2}f(r)$. A calculation similar to the proof of Lemma \ref{noweight} shows that for any sufficiently smooth function $f$ such that $f(a)=f(b)=0$, we have
    $$\frac{\int_a^{b}f'(r)^2 r^{n-1}dr}{\int_a^b f(r)^2 r^{n-1}dr}=\frac{\int_{a}^{b}g'(r)^2 +\frac{(n-3)(n-1)}{4r^2}g(r)^2 dr}{\int_{a}^{b}g(r)^2dr}.$$
    It follows that when $n\geq 3$ (so that $(n-3)(n-1)\geq 0$),
    \begin{align*}
         \frac{(n-3)(n-1)}{4b^2}+\inf_{g\in C^{\infty}_c(a,b)}\frac{\int_a^{b}g'(r)^2 dr}{\int_{a}^b g(r)^2 dr}\leq \lambda(A_{a,b})\leq \inf_{g\in C^{\infty}_c(a,b)}\frac{\int_a^{b}g'(r)^2 dr}{\int_{a}^b g(r)^2 dr}+\frac{(n-3)(n-1)}{4a^2}.
    \end{align*}
    Since the above infimum is just the Dirichlet Laplacian eigenvalue of the interval $(a,b)$, we obtain
    \begin{align}
        \label{annulieigval4}
         \frac{(n-3)(n-1)}{4b^2}+\frac{\pi^2}{(b-a)^2}\leq \lambda(A_{a,b})\leq\frac{\pi^2}{(b-a)^2}+\frac{(n-3)(n-1)}{4a^2}.
    \end{align}
    Combining (\ref{annulieigval2}), (\ref{annulieigval3}), (\ref{annulieigval4}) and rearranging give the desired conclusion. The case for $n=2$ is similar.
\end{proof}

For a general annular domain $(a,b)\times U_0$, we also prove two-sided bounds on $\lambda(U)$ in terms of $\lambda(A_{a,b})$ and $\lambda(U_0)$.

\begin{Lemma} 
    \label{eigvaldecomp}
Let $U_0\subseteq \mathbb{S}^{n-1}$ be a domain. In polar coordinates, consider the domain $U=(a,b)\times U_0\subseteq \mathbb{R}^n$. We have
\begin{align}
    \label{eigvaldecomp1}
\lambda(A_{a,b})+\frac{\lambda(U_0)}{b^2}\leq \lambda(U)\leq \lambda(A_{a,b})+\frac{\lambda(U_0)}{a^2}.
\end{align}
\end{Lemma}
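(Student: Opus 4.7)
The plan is to exploit the product structure $\varphi_U(r,\theta) = f_U(r)\,g(\theta)$ established in the separation of variables (\ref{sepvar}), where $g = \varphi_{U_0}$ is the principal Dirichlet eigenfunction of $U_0$ on $\mathbb{S}^{n-1}$ and $f_U$ satisfies the one-dimensional Schr\"odinger equation
\[
-f_U''-\frac{n-1}{r}f_U'+\frac{\lambda(U_0)}{r^2}f_U=\lambda_U f_U, \qquad f_U(a)=f_U(b)=0.
\]
The key identity driving both bounds is that, multiplying this equation by $f_U r^{n-1}$ and integrating from $a$ to $b$,
\[
\lambda_U = \frac{\int_a^b f_U'(r)^2 r^{n-1}\,dr}{\int_a^b f_U(r)^2 r^{n-1}\,dr} + \lambda(U_0)\cdot \frac{\int_a^b f_U(r)^2 r^{n-3}\,dr}{\int_a^b f_U(r)^2 r^{n-1}\,dr}.
\]
The first quotient is a radial Rayleigh quotient on the annulus $A_{a,b}$; the second is a weighted average of $r^{-2}$ over $[a,b]$, hence lies in $[1/b^2,1/a^2]$.

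For the \emph{lower bound}, I would observe that since $\varphi_{A_{a,b}}$ is itself radial, $\lambda(A_{a,b})$ coincides with the infimum of the radial Rayleigh quotient
\[
\inf_{f\in C^\infty_c(a,b)} \frac{\int_a^b f'(r)^2 r^{n-1}\,dr}{\int_a^b f(r)^2 r^{n-1}\,dr}
\]
(radial trial functions embed into $W^{1,2}_0(A_{a,b})$, and the infimum over all of $W^{1,2}_0(A_{a,b})$ is attained on a radial eigenfunction, so the two infima agree). Since $f_U\in W^{1,2}_0(a,b)$ is such a radial trial function, the first term in the identity above is $\geq \lambda(A_{a,b})$, and the weighted average of $r^{-2}$ is $\geq 1/b^2$. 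Combining gives $\lambda(U)\geq \lambda(A_{a,b})+\lambda(U_0)/b^2$.

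For the \emph{upper bound}, I would use the test function $h(r,\theta)=\varphi_{A_{a,b}}(r)\,\varphi_{U_0}(\theta)\in W^{1,2}_0(U)$ in the variational characterization (\ref{rayleigh}) of $\lambda(U)$. Computing $|\nabla h|^2=\varphi_{A_{a,b}}'(r)^2\varphi_{U_0}(\theta)^2+r^{-2}\varphi_{A_{a,b}}(r)^2|\nabla_{\mathbb{S}^{n-1}}\varphi_{U_0}(\theta)|^2$, integrating in polar coordinates, and using $\int_{U_0}|\nabla_{\mathbb{S}^{n-1}}\varphi_{U_0}|^2\,d\sigma=\lambda(U_0)$ together with the radial identity $\int_a^b (\varphi_{A_{a,b}}')^2 r^{n-1}dr = \lambda(A_{a,b})\int_a^b \varphi_{A_{a,b}}^2 r^{n-1}dr$ (which itself follows from the radial eigenvalue equation for $\varphi_{A_{a,b}}$), one obtains
\[
\lambda(U)\leq \lambda(A_{a,b}) + \lambda(U_0)\cdot \frac{\int_a^b \varphi_{A_{a,b}}(r)^2 r^{n-3}\,dr}{\int_a^b \varphi_{A_{a,b}}(r)^2 r^{n-1}\,dr}\leq \lambda(A_{a,b})+\frac{\lambda(U_0)}{a^2},
\]
using the pointwise bound $r^{-2}\leq a^{-2}$.

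There is essentially no serious obstacle here; the only subtle point is the short justification that restricting the Rayleigh quotient on $A_{a,b}$ to radial test functions still produces $\lambda(A_{a,b})$ (because the principal eigenfunction of the rotationally symmetric annulus is itself radial). Everything else is a direct computation via separation of variables and a pointwise bound on $1/r^2$.
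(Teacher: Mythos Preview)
Your proof is correct and follows essentially the same strategy as the paper: express both $\lambda(U)$ and $\lambda(A_{a,b})$ via a one-dimensional radial Rayleigh quotient and bound the extra $\lambda(U_0)/r^2$ term pointwise by $\lambda(U_0)/b^2$ and $\lambda(U_0)/a^2$. The only cosmetic difference is that the paper first applies the unitary transform of Lemma~\ref{noweight} to work with the unweighted measure $dr$ (so the functional reads $\int f'^2 + (\alpha+\lambda(U_0))r^{-2}f^2\,dr$ with $\alpha=(n-1)(n-3)/4$) and then compares the two variational infima directly, whereas you stay with the $r^{n-1}$-weighted quotient and plug in the specific eigenfunctions $f_U$ and $\varphi_{A_{a,b}}$ as test functions for the two directions. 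Both routes arrive at the same inequality with the same effort.
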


\begin{proof}
    Let $I:=(a,b)\subseteq \mathbb{R}$ and let $\alpha:=(n-1)(n-3)/4$. By Lemma \ref{noweight}, we have
    \begin{align}
        \label{intervaleigval}
       \lambda_U=\lambda(I\times U_0)=\inf_{ f\in C^{\infty}_c(I)}\Bigg\{\frac{\int_{a}^{b} f'(r)^2dr}{\int_a^{b}f(r)^2dr}+(\alpha+\lambda(U_0))\frac{\int_a^{b}\frac{f(r)^2}{r^2}dr}{\int_a^{b}f(r)^2dr}\Bigg\}
    \end{align}
    and, since $\lambda(\mathbb{S}^{n-1})=0$,
     \begin{align}
        \label{intervaleigval1}
       \lambda(A_{a,b})=\lambda(I\times \mathbb{S}^{n-1})=\inf_{ f\in C^{\infty}_c(I)}\Bigg\{\frac{\int_{a}^{b} f'(r)^2dr}{\int_a^{b}f(r)^2dr}+\alpha\frac{\int_a^{b}\frac{f(r)^2}{r^2}dr}{\int_a^{b}f(r)^2dr}\Bigg\}.
    \end{align}
    (\ref{intervaleigval}) and (\ref{intervaleigval1}) together imply (\ref{eigvaldecomp1}).
\end{proof}

Next, we begin to prepare the proof of Theorem \ref{annulusprofile}, which gives explicit expressions comparable to $\varphi_U$ for any annuli $U\subseteq \mathbb{R}^n$. We precede Theorem \ref{annulusprofile} by several preparatory lemmas. 

\begin{Lemma}
    \label{maxeig}
    Let $U\subseteq \mathbb{R}^n$ be any domain with $\lambda(U)>0$, with the corresponding Dirichlet Laplacian eigenfunction $\varphi_U$ normalized so that $\|\varphi_U\|_{L^2(U)}=1$. Then
    $$\frac{1}{|U|}\leq \|\varphi_U\|_{L^{\infty}(U)}^2\lesssim \lambda(U)^{n/2},$$
    where the implied constant depends only on $n$.
\end{Lemma}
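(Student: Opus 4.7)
The two inequalities split naturally and I would handle them separately. The lower bound $1/|U| \leq \|\varphi_U\|_{L^\infty(U)}^2$ is immediate: since $\varphi_U$ is normalized so that $\int_U \varphi_U^2\,dx = 1$, one has
\[
1 = \int_U \varphi_U(x)^2\,dx \leq \|\varphi_U\|_{L^\infty(U)}^2 \cdot |U|,
\]
which rearranges to the claim. So all the work is in the upper bound.

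For the upper bound $\|\varphi_U\|_{L^\infty(U)}^2 \lesssim \lambda(U)^{n/2}$, the plan is to use the eigenfunction equation together with pointwise bounds on the Dirichlet heat kernel. Since $\varphi_U$ is the principal eigenfunction, the identity $e^{-t\lambda_U}\varphi_U(x) = \int_U p^D_U(t,x,y)\varphi_U(y)\,dy$ holds for all $t>0$ and $x\in U$. Applying Cauchy--Schwarz together with $\|\varphi_U\|_{L^2(U)}=1$ gives
\[
|\varphi_U(x)|^2 \leq e^{2t\lambda_U}\int_U p^D_U(t,x,y)^2\,dy = e^{2t\lambda_U}\, p^D_U(2t,x,x),
\]
where the last equality uses the semigroup property together with symmetry of the Dirichlet heat kernel.

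The next step is the classical domain-monotonicity comparison: extending Brownian motion from $U$ to $\mathbb{R}^n$ only decreases killing, so $p^D_U(s,x,x) \leq p_{\mathbb{R}^n}(s,x,x) = (4\pi s)^{-n/2}$ for all $s>0$ and $x\in U$. Substituting with $s=2t$ yields
\[
|\varphi_U(x)|^2 \leq \frac{e^{2t\lambda_U}}{(8\pi t)^{n/2}} \quad \text{for all } t>0.
\]
Finally, I would optimize in $t$. Differentiating the logarithm of the right-hand side and setting the derivative to zero gives $t_\ast = n/(4\lambda_U)$, and plugging this back in produces
\[
|\varphi_U(x)|^2 \leq \frac{e^{n/2}}{(2\pi n)^{n/2}}\,\lambda(U)^{n/2},
\]
which is the asserted bound with a constant depending only on $n$. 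No step here presents a real obstacle; the only point requiring a touch of care is ensuring the heat-kernel identity and semigroup application are justified for the possibly rough domain $U$, but for any open set $U$ with $\lambda(U)>0$ the Dirichlet heat semigroup and its spectral decomposition are well defined, so the argument goes through verbatim.
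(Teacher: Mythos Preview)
Your proof is correct. The paper itself does not give a self-contained argument here: it simply cites Lemma~4.2 of \cite{chaolsc} and its proof. Your heat-kernel argument---applying the semigroup identity to $\varphi_U$, Cauchy--Schwarz, the reproducing property $\int p^D_U(t,x,y)^2\,dy = p^D_U(2t,x,x)$, domain monotonicity $p^D_U \le p_{\mathbb{R}^n}$, and optimization in $t$---is the standard route to this kind of bound and is almost certainly what lies behind the cited reference. So there is no substantive difference in approach to report; you have simply written out what the paper leaves as a citation.
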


\begin{proof}
    The desired statement follows from e.g. Lemma 4.2, \cite{chaolsc} and its proof.
\end{proof}

\begin{Lemma} \label{phi}Let $U\subseteq \mathbb{R}^n$ be a domain with $\lambda(U)>0$, with corresponding Dirichlet Laplacian eigenfunction $\varphi_U$. Let $B(z,r)$ be an Euclidean ball contained inside $U$. Then for all $x,y\in B(z,r/2)$, we have 
$$\varphi_U(x)\lesssim \varphi_U(y),$$
where the implied constant depends only on $n$ and an upper bound on $\lambda(U)r^2$.
\end{Lemma}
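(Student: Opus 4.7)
The plan is to reduce the lemma to the classical elliptic Harnack inequality applied to the positive solution $\varphi_U$ of the Schr\"odinger-type equation $-\Delta \varphi_U=\lambda_U \varphi_U$, which holds pointwise on the Euclidean ball $B(z,r)\subseteq U$. Since $B(z,r)$ sits inside $U$, there is no interference from the Dirichlet boundary $\partial U$ on this ball, so $\varphi_U$ is simply a positive smooth solution of a linear second-order elliptic PDE with constant zero-order coefficient $\lambda_U$. Harnack's inequality for such equations on $\mathbb{R}^n$ yields the desired comparison, and the dependence of the constant on $\lambda_U r^2$ will appear naturally from the scaling.

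First, I would rescale to the unit ball. Set $\tilde{\varphi}(\xi):=\varphi_U(z+r\xi)$ for $\xi\in B(0,1)$. A direct computation using the chain rule gives
\[
-\Delta_\xi \tilde{\varphi}(\xi)=r^2\lambda_U \,\tilde{\varphi}(\xi),\qquad \xi\in B(0,1),
\]
and $\tilde\varphi>0$ throughout $B(0,1)$. Thus $\tilde\varphi$ is a positive solution of the uniformly elliptic equation $-\Delta u+Vu=0$ on $B(0,1)$ with bounded potential $V\equiv -r^2\lambda_U$.

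Second, I would apply the standard elliptic Harnack inequality (e.g.\ the Moser/De Giorgi Harnack inequality for $-\Delta u+Vu=0$ with $V\in L^\infty$; see e.g.\ Gilbarg--Trudinger) on the concentric balls $B(0,1/2)\subset B(0,1)$ to obtain
\[
\sup_{B(0,1/2)}\tilde{\varphi}\;\leq\; C\,\inf_{B(0,1/2)}\tilde{\varphi},
\]
where the constant $C$ depends only on $n$ and an upper bound on $\|V\|_{L^\infty(B(0,1))}=r^2\lambda_U$. Finally, unraveling the rescaling: for any $x,y\in B(z,r/2)$, write $x=z+r\xi_x$, $y=z+r\xi_y$ with $\xi_x,\xi_y\in B(0,1/2)$, and then
\[
\varphi_U(x)=\tilde{\varphi}(\xi_x)\leq C\,\tilde{\varphi}(\xi_y)=C\,\varphi_U(y),
\]
which is the claimed inequality.

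The proof is essentially a scaling argument packaging elliptic Harnack, and I do not anticipate a genuine obstacle. The only point that requires care is keeping track of how the Harnack constant depends on the potential: one must invoke a version of Harnack where the constant is explicitly controlled by $\|V\|_\infty$ times the square of the ball radius (which is $1$ after rescaling, leaving the dependence on $r^2\lambda_U$). An equivalent route, which would give the same dependence, is to apply the parabolic Harnack inequality to the caloric function $(t,x)\mapsto e^{\lambda_U t}\varphi_U(x)$ on a cylinder $(0,r^2)\times B(z,r)\subset (0,\infty)\times U$; this may be preferable if the authors wish to reuse the parabolic framework already invoked elsewhere in the paper.
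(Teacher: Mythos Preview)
Your argument is correct: rescaling to the unit ball and invoking the elliptic Harnack inequality for $-\Delta u + Vu = 0$ with $V\equiv -\lambda_U r^2$ gives exactly the claimed bound with the claimed dependence of the constant.

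The paper takes the parabolic route you mention at the end. It sets $u(t,x)=e^{-\lambda_U t}\varphi_U(x)$ (note the sign: your aside has $e^{+\lambda_U t}$, which solves the backward equation), observes that $u$ is a nonnegative solution of the ordinary heat equation $\partial_t u=\Delta u$ on $(0,r^2)\times B(z,r)$, and applies the parabolic Harnack inequality there. The slight advantage of the paper's version is that the Harnack constant $H$ for the pure heat equation depends only on $n$, and the dependence on $\lambda_U r^2$ then appears in a completely explicit factor $e^{\lambda_U r^2/4}$ coming from the time shift between the two parabolic cylinders. In your elliptic version the $\lambda_U r^2$ dependence is hidden inside the Harnack constant for the Schr\"odinger operator, which is fine for the lemma as stated but slightly less transparent. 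Either route is adequate here.
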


\begin{proof} Define $u:(0,r^2)\times B(z,r)\to \mathbb{R}^n$ by  $u(t,x)=e^{-\lambda_U t}\varphi_U(x)$. Then $u(x,t)$ is a nonnegative weak solution to the heat equation $\partial_t u=\Delta u$. Thus, the parabolic Harnack inequality implies that for some constant $H\geq 1$ depending only on $n$,
$$\sup_{Q_-}u(t,x)\leq H \inf_{Q_+}u(t,x),$$
where $Q_+=(r^2/4,r^2/2)\times B(z,r/2)$ and $Q_-=(3r^2/4,r^2)\times B(z,r/2)$. Thus,
$$\varphi_U(x)\leq H\exp(\lambda_U r^2/4)\varphi_U(y),$$ and the lemma follows.
\end{proof}

We now state and prove Theorem \ref{annulusprofile}.

\begin{Theo}
    \label{annulusprofile}
    Consider an annulus $A_{a,b}=\{x\in \mathbb{R}^n:a<|x|<b\}$ in $\mathbb{R}^n$. Let $\varphi_{A_{a,b}}$ denote the principal Dirichlet Laplacian eigenfunction of $A_{a,b}$, normalized so that $\|\varphi_{A_{a,b}}\|_{L^2(A_{a,b})}=1$.
    \begin{enumerate}
        \item (Thin annuli) Suppose $a/b\in (1/2,1)$. Then 
        \begin{align}
            \label{p1}
            \varphi_{A_{a,b}}(x)\asymp \frac{1}{a^{n/2+1}}\cdot \frac{\min\{|x|-a,b-|x|\}}{(b/a-1)^{3/2}}.
        \end{align}
        \item (Non-thin annuli) Suppose $a/b\in (0,1/2]$. Then if $n\geq 3$, 
        \begin{align}  
            \label{p2}
            \varphi_{A_{a,b}}(x)\asymp \frac{1}{b^{n/2}}\Big(1-\Big(\frac{a}{|x|}\Big)^{n-2}\Big)\Big(1-\frac{|x|}{b}\Big)
        \end{align}
        while if $n=2$, 
        \begin{align}
            \label{p3}
            \varphi_{A_{a,b}}(x)\asymp \frac{1}{b}\cdot \frac{1}{\log\Big(1+\frac{b}{4a}\Big)}\log\Big(\frac{|x|}{a}\Big)\Big(1-\frac{|x|}{b}\Big).
        \end{align}
    \end{enumerate}
    The implied constants in (\ref{p1}) and (\ref{p2}) depend only on $n$, while the implied constants in (\ref{p3}) are absolute.
\end{Theo}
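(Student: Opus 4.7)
I plan to split into the thin case $a/b \in (1/2,1)$ and the non-thin case $a/b \in (0,1/2]$, applying Lemma~\ref{phiapprox} directly in the first and combining the boundary Harnack principle of Proposition~\ref{bhp} with $L^2$-normalization in the second.

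For part (1), the hypothesis $a/b \in (1/2,1)$ is equivalent to $b/a \in (1,2)$, so the result follows from Lemma~\ref{phiapprox} applied to $U = A_{a,b} = (a,b) \times \mathbb{S}^{n-1}$. Since $\lambda(\mathbb{S}^{n-1}) = 0$ and the normalized principal Dirichlet eigenfunction $g(\theta)$ of $\mathbb{S}^{n-1}$ is a positive dimensional constant, Lemma~\ref{phiapprox} (with constants depending only on $n$) gives $\varphi_{A_{a,b}}(r,\theta) \asymp a^{-(n-1)/2}(b-a)^{-3/2}\min\{r-a,b-r\}$, which rearranges to (\ref{p1}) using $b - a = a(b/a - 1)$.

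For part (2), the scaling identity in Proposition~\ref{scaling} reduces matters to $b = 1$ and $a \in (0, 1/2]$. In this range, $A_{a,1}$ is uniformly inner uniform with constants depending only on $n$, and $\lambda(A_{a,1}) \asymp 1$ by Proposition~\ref{annulieigval}. I would compare $\varphi_{A_{a,1}}$ to an explicit product $c(a,n)\, h_a(x) \, (1-|x|)$, where $h_a$ is a positive harmonic function on $\{|x|>a\}$ vanishing on the inner boundary: take $h_a(x) = 1 - (a/|x|)^{n-2}$ for $n \geq 3$, and $h_a(x) = \log(|x|/a)/\log(1 + 1/(4a))$ for $n = 2$. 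Applying Proposition~\ref{bhp} with $h_a$ near $\{|x|=a\}$, and with the harmonic proxy $|x|^{-(n-2)}-1$ (or $-\log|x|$ when $n=2$) near $\{|x|=1\}$, yields $\varphi_{A_{a,1}} \asymp c_1 h_a$ in a collar of the inner boundary and $\varphi_{A_{a,1}} \asymp c_2 (1-|x|)$ in a collar of the outer boundary, since the outer proxy satisfies $|x|^{-(n-2)} - 1 \asymp 1 - |x|$ near $|x|=1$ by Taylor expansion. Interior Harnack (Lemma~\ref{phi}) gives $\varphi_{A_{a,1}} \asymp \varphi_{A_{a,1}}(x_0)$ on the middle shell $\{2a \leq |x| \leq 3/4\}$, and the three regimes are matched through this common amplitude. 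Finally, the normalization $\|\varphi_{A_{a,1}}\|_{L^2} = 1$ determines $c(a,n)$ by an explicit evaluation of $\|h_a(x)(1-|x|)\|_{L^2(A_{a,1})}^2$, giving the prefactor $b^{-n/2}$ in (\ref{p2}) and (\ref{p3}) after undoing the rescaling.

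The hardest step will be the $n = 2$ case, in which $\log(|x|/a)$ is unbounded as $a \to 0$; keeping constants uniform in $a \in (0,1/2]$ forces the logarithmic normalization $1/\log(1 + b/(4a))$ appearing in (\ref{p3}), and one must verify that the $L^2$-integral and the boundary-amplitude matching both produce precisely this factor. A secondary technical point is the uniform-in-$a$ inner uniformity of $A_{a,1}$, which one checks by joining any two points through an intermediate radial excursion to $\{|x| = 3/4\}$, keeping the minimum distance-to-boundary comparable to $\min\{|x_1|-a, |x_2|-a\}$ along the curve.
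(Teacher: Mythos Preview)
Your treatment of part (1) matches the paper exactly. The gap is in part (2), specifically your choice of the middle shell $\{2a \leq |x| \leq 3/4\}$. Interior Harnack (Lemma~\ref{phi}) does \emph{not} give $\varphi_{A_{a,1}} \asymp \varphi_{A_{a,1}}(x_0)$ uniformly there: at $|x|=2a$ the distance to $\partial A_{a,1}$ is only $a$, so a Harnack chain from $|x|=2a$ to $|x|=3/4$ requires $\asymp \log(1/a)$ steps and the constant blows up. This is not a cosmetic issue---when $n=2$ the eigenfunction genuinely varies by a factor $\asymp \log(1/a)$ across that shell (compare $h_a(2a)=\log 2/\log(1+\tfrac{1}{4a})$ with $h_a(3/4)\asymp 1$), so the asserted three-piece profile with a constant middle is simply false, and the subsequent $L^2$ normalization would produce the wrong prefactor.

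The paper repairs this by making the inner collar of \emph{fixed} width, $\{a<|x|\leq a+1/4\}$: boundary Harnack (Proposition~\ref{bhp}) is applied on $\{a<|x|<a+1/100\}$ and then extended by a bounded Harnack chain to $a+1/4$, so that $\varphi_{A_{a,1}}(y)/\varphi_{A_{a,1}}(y_2)\asymp h(y)/h(y_2)$ for all $y$ in this larger collar and a reference point $|y_2|=a+1/4$. On the remaining shell $\{a+1/4<|x|<1\}$, which is now bounded away from the inner sphere, the paper shows $\varphi_{A_{a,1}}\asymp 1-|x|$ directly via the eigenfunction comparison results of \cite{chaolsc}. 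The amplitude is fixed not by an $L^2$ computation but by proving $\varphi_{A_{a,1}}(y_2)\asymp 1$: the upper bound is Lemma~\ref{maxeig} combined with $\lambda(A_{a,1})\asymp 1$, and the lower bound comes from comparing with the eigenfunction of a fixed ball of radius $1/4$ centered at $y_2$ (Theorem~4.2 of \cite{chaolsc}). Your $L^2$-normalization route would also work once the collar widths are corrected, but the paper's amplitude argument is cleaner and avoids the integral.
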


\begin{Rmk}
    \normalfont
    \label{thinremark}
    In Theorem \ref{annulusprofile}, the choice of the constant $1/2$ to distinguish between \Quote{thin annuli} and \Quote{non-thin annuli} is essentially arbitrary, and can be replaced by any fixed number in the interval $(0,1)$. 
\end{Rmk}

\begin{proof}
    The case when $a/b\in (1/2,1)$ is a consequence of Lemma \ref{lierlcompare}. Thus we prove the result when $a/b\in (0,1/2)$. By a scaling argument to be mentioned at the end of the proof, we may assume for now that $b=1$. Observe that the annulus $A_{a,b}$ is locally inner uniform on a small neighborhood near the the origin, say $\{x\in\mathbb{R}^n:a<|x|<a+1/100\}$, where $1/100$ represents any sufficiently small number depending at most on the dimension $n$, chosen for convenience. We apply the boundary Harnack principle (Proposition \ref{bhp}) on this neighborhood; the hypothesis on $\lambda(A_{a,b}) R^2$ is satisfied because both $\lambda(A_{a,b})$ and $R^2$ are $\lesssim_n 1$ in the regime $a/b\in (0,1/2)$. By a chaining argument using Lemma \ref{phi}, we may enlarge this neighborhood from $a+1/100$ to $a+1/4$, so that
    \begin{align}
        \label{1a}
        \frac{\varphi_{A_{a,b}}(y_1)}{\varphi_{A_{a,b}}(y_2)}\asymp_n\frac{h(y_1)}{h(y_2)},\hspace{0.2in}\forall y_1,y_2\in\{a<|x|\leq a+1/4\}.
    \end{align}
    We choose $y_2$ with $|y_2|=a+1/4$. Lemma \ref{maxeig} and Proposition \ref{annulieigval} give
    \begin{align*}
        \varphi_{A_{a,b}}(y_2)\lesssim \|\varphi_{A_{a,b}}\|_{L^{\infty}(A_{a,b})}\asymp 1.
    \end{align*}
    On the other hand, because $\varphi_{A_{a,b}}$ is a radial function, we may assume without loss of generality that $y_2$ lies on the first coordinate axis. Define a ball $V=\{(x_1-(a+1/4))^2+x_2^2+\cdots+x_n^2<(1/4)^2\}\subseteq A_{a,b}$ centered on the first coordinate axis, with center $y_2$. By an eigenfunction comparison result in Theorem 4.2, \cite{chaolsc}, as well as Lemma \ref{maxeig}, we get $\varphi_{A_{a,b}}(y_2)\gtrsim \varphi_V(y_2)\gtrsim_n 1$. It follows that $\varphi_{A_{a,b}}(y_2)\asymp 1$.
    
    Thus choosing the harmonic functions $h(x)=|x|^{-(n-2)}-a^{-(n-2)}$ if $n\geq 3$ and $h(x)=\ln(|x|/a)$ if $n=2$, we get from (\ref{1a}) that 
    \begin{align}
        \label{1b}
        \varphi_{A_{a,b}}(x)\asymp_n \frac{h(x)}{h(y_2)}\asymp \begin{cases}
        \displaystyle 1-(\frac{a}{|x|})^{n-2}, &n\geq 3
        \\ \displaystyle \frac{1}{\ln(1+1/(4a))}\ln(|x|/a),& n=2
    \end{cases}
    \end{align}
    for any $x$ with $a<|x|\leq a+1/4$. On the region $\{a+1/4<|x|<1\}$, we claim that eigenfunction comparison from \cite{chaolsc} implies 
    \begin{align}
        \label{1c}
        \varphi_{A_{a,b}}(x)\asymp_n 1-|x|.
    \end{align}
    The upper bound in (\ref{1c}) follows because $\varphi_{A_{a,b}}(x)\lesssim_n \varphi_{B(0,1)}(x)\asymp_n 1-|x|$, while the lower bound follows from a similar chain of inequalities applied to an appropriate ball $B\subseteq A_{a,b}$ tangent to $\partial A_{a,b}$. Note that (\ref{1c}) holds regardless if $n=2$ or $n\geq 3$. Combining (\ref{1b}) and (\ref{1c}) into a single global expression, we thus get that when $b=1$, for all $x\in A_{a,b}$,
    \begin{align*}
        \varphi_{A_{a,b}}(x)\asymp_n\begin{cases}
        \displaystyle \Big\{1-(\frac{a}{|x|})^{n-2}\Big\}(1-|x|), &n\geq 3
        \\ \displaystyle \frac{1}{\ln(1+1/(4a))}\ln\Big(\frac{|x|}{a}\Big)\cdot (1-|x|),& n=2.
        \end{cases}
    \end{align*}
    The general case follows from scaling properties of Dirichlet Laplacian eigenfunctions: for any domain $U$ and $c>0$, we have $\varphi_{cU}(x)=c^{-n/2}\varphi_{U}(x)$ where $cU$ is any homothetic copy of $U$ dilated by $c>0$.
\end{proof}

\subsection{Estimate for the eigenvalue gap of two annuli}

In this short section, we aim to prove in Lemma \ref{eiggap} that if $A_{a,b}\subseteq A_{a',b'}$ are two Euclidean annuli, then under suitable assumptions on $a,a',b,b'$, we can make $\lambda_{A_{a,b}}-\lambda_{A_{a',b'}}$ uniformly bounded.

\begin{Lemma}
    \label{hadalemma}
    Let $A_{a,b}=\{x\in \mathbb{R}^n:a<|x|<b\}$ denote an annulus with radii $0<a<b$. Let $\{b_{\varepsilon}\}_{\varepsilon>0}$ be a nonnegative sequence of real numbers with $b_{\varepsilon}\to 0$ as $\varepsilon \to 0$. Define
    \begin{align*}
        \Phi_1(t)=\lambda(A_{1,1+t}),\hspace{0.5in}\Phi_2(t)=\lambda(A_{1-t,1+\varepsilon+b_{\varepsilon}}).
    \end{align*}
    For all $t\in (0,1]$, we have $|\Phi'_1(t)|\asymp t^{-3}$. For all $t\in [0,1/4)$ and for all $\varepsilon>0$ sufficiently small so that $\varepsilon+b_{\varepsilon}\leq 1$, $|\Phi_2'(t)|\asymp (\varepsilon+b_{\varepsilon}+t)^{-3}$. The implied constants depend only on the dimension $n$. 
\end{Lemma}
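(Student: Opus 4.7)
The natural tool is Hadamard's variational formula for the first Dirichlet eigenvalue under smooth perturbation of the boundary. For a smoothly deforming family of domains $\Omega_s$ with outward normal velocity $v_\nu$, and $\varphi_s$ the $L^2$-normalized principal eigenfunction,
$$\frac{d}{ds}\lambda(\Omega_s) = -\int_{\partial \Omega_s} |\nabla \varphi_s|^2 \, v_\nu \, d\sigma.$$
For $\Phi_1$ only the outer boundary $\{|x|=1+t\}$ moves (with $v_\nu = 1$); for $\Phi_2$ only the inner boundary $\{|x|=1-t\}$ moves, and its inward motion counts as outward with respect to the domain, so again $v_\nu = 1$. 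Using the radial structure $\varphi_{A_{a,b}}(x) = f(|x|)\, g_0$ with $g_0 = 1/\sqrt{\sigma_{n-1}(\mathbb{S}^{n-1})}$ the constant eigenfunction on $\mathbb{S}^{n-1}$, each surface integral collapses to $r^{n-1}|f'(r)|^2$. Since the moving radius is $\asymp 1$ in both regimes, matters reduce to showing $|f'(a)|^2$ (and likewise $|f'(b)|^2$) is comparable to $(b-a)^{-3}$, with $b-a = t$ for $\Phi_1$ and $b-a = t + \varepsilon + b_\varepsilon$ for $\Phi_2$.

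To estimate these boundary derivatives, the plan is to apply the unitary change $\widetilde{f}(r) = r^{(n-1)/2} f(r)$ of Lemma \ref{noweight}, which converts the weighted radial equation into the regular one-dimensional Schr\"{o}dinger problem
$$-\widetilde{f}'' + V(r)\widetilde{f} = \lambda \widetilde{f}, \quad \widetilde{f}(a) = \widetilde{f}(b) = 0, \quad \|\widetilde{f}\|_{L^2(a,b)} = 1,$$
with uniformly bounded potential $V(r) = (n-1)(n-3)/(4r^2)$ on the relevant range $[a,b] \subseteq [3/4, 5/2]$. Since $\widetilde{f}$ vanishes at each endpoint, $f'(a) = a^{-(n-1)/2} \widetilde{f}'(a)$ and analogously at $b$. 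Next, rescale to unit length via $u(\tau) := \sqrt{b-a}\,\widetilde{f}(a + \tau(b-a))$, obtaining the Sturm-Liouville problem $-u'' + \widetilde{V} u = \mu u$ on $[0,1]$ with $u(0)=u(1)=0$, $\|u\|_{L^2(0,1)}=1$, potential $\widetilde{V}(\tau) := (b-a)^2 V(a+\tau(b-a))$ bounded by an $n$-dependent constant, and eigenvalue $\mu = (b-a)^2 \lambda(A_{a,b}) \asymp 1$ by Proposition \ref{annulieigval}. A direct computation gives $|\widetilde{f}'(a)|^2 = |u'(0)|^2/(b-a)^3$, reducing the entire lemma to proving $|u'(0)|, |u'(1)| \asymp 1$ uniformly.

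The crux, and the main obstacle, is converting the $L^2$-normalization of $u$ into pointwise boundary-slope bounds. The strategy is to invoke Theorem 7.13 of \cite{lierllsc} on the fixed interval $[0,1]$, exactly as in the proof of Lemma \ref{lierlcompare} above, which yields the uniform two-sided comparison $u(\tau) \asymp \sin(\pi \tau)$ on $[0,1]$ with implicit constants depending only on $\|\widetilde{V}\|_\infty$, hence only on $n$. Since $u$ is smooth on $[0,1]$ and $u(0)=0$, dividing this comparison by $\tau$ and letting $\tau \to 0^+$ turns the $L^\infty$ bound into $|u'(0)| \asymp \pi$, and the symmetric argument at the other endpoint gives $|u'(1)| \asymp \pi$. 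Undoing the rescalings yields $|f'(a)|^2, |f'(b)|^2 \asymp (b-a)^{-3}$, and substituting back into the two Hadamard identities produces the claimed $|\Phi_1'(t)| \asymp t^{-3}$ and $|\Phi_2'(t)| \asymp (t + \varepsilon + b_\varepsilon)^{-3}$.
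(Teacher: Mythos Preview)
Your proof is correct and follows essentially the same route as the paper: apply Hadamard's variational formula, then use the comparison of the radial eigenfunction with the explicit sine/cosine profile (the content of Lemma~\ref{lierlcompare}, which rests on Theorem~7.13 of \cite{lierllsc}) to read off the boundary derivative. The paper compresses your last two steps into the single claim that Lemma~\ref{phiapprox} implies $|\partial_r\varphi_{A_{1,1+t}}|\asymp t^{-3/2}\sigma_{n-1}(\mathbb{S}^{n-1})^{-1/2}$ on the moving sphere; your explicit rescaling to $[0,1]$ and the limiting-quotient argument $u(\tau)/\tau\to u'(0)$ make transparent exactly how the pointwise comparison of eigenfunctions is upgraded to a comparison of boundary slopes.
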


\begin{proof}
    Note that $\varphi_{A_{1,1+t}}$ is a radial function. Thus, by Hadamard's variational formula (Problem 15, Page 369, \cite{evans}), we have 
    \begin{align}
        \label{hada1}
       \frac{d\Phi_1(t)}{dt}=-\int_{\{|x|=1+t\}} \Bigg|\frac{\partial \varphi_{A_{1,1+t}}}{\partial r} \Bigg|^2 dS_t, 
    \end{align}
    where $dS_t$ is the surface measure on the sphere $\{x\in \mathbb{R}^n:|x|=1+t\}$. Lemma \ref{phiapprox} implies that
    \begin{align}
        \label{hada2}
         \Bigg|\frac{\partial \varphi_{A_{1,1+t}}}{\partial r} (r,\theta)\Bigg|\asymp \frac{1}{t^{3/2}\sqrt{\sigma_{n-1}(\mathbb{S}^{n-1})}},
    \end{align}
    with the implied constant depending only on $n$. Combining (\ref{hada1}) and (\ref{hada2}) gives the claim about $\Phi_1(t)$. The proof for $\Phi_2(t)$ is similar.
\end{proof}

\begin{Lemma}
    \label{eiggap}
    Let $\{a_{\varepsilon}\}_{\varepsilon>0}$ and $\{b_{\varepsilon}\}_{\varepsilon>0}$ be two nonnegative sequences, and, assume that for some constants $C_1,C_2>0$, $a_{\varepsilon}\in [0,C_1\varepsilon^3)$ and $b_{\varepsilon}\in [0,C_2\varepsilon^3)$ for all $\varepsilon\in (0,1)$. Define
    \begin{align*}
        & A=A_{1,1+\varepsilon}=\{x\in \mathbb{R}^n:1<x<1+\varepsilon\},
        \\ & B = A_{1-a_{\varepsilon},1+\varepsilon+b_{\varepsilon}}=\{x\in \mathbb{R}^n:1-a_{\varepsilon}<x<1+\varepsilon+b_{\varepsilon}\}.
    \end{align*}
    For all sufficiently small $\varepsilon>0$ such that $\varepsilon+b_{\varepsilon}\leq 1$ and $a_{\varepsilon}\leq 1/4$, 
    $$0<\lambda_A-\lambda_B\leq C,$$
    for some finite constant $C\in (0,\infty)$ depending only on $C_1,C_2$ and $n$. 
\end{Lemma}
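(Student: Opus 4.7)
The strict lower bound $\lambda_A > \lambda_B$ (interpreted in the non-degenerate case where at least one of $a_\varepsilon, b_\varepsilon$ is positive, so that $A \subsetneq B$) is immediate from strict domain monotonicity of the principal Dirichlet eigenvalue, Proposition \ref{lambda}(4). The real content is the uniform upper bound, and for this the plan is to interpolate through an intermediate annulus and apply Lemma \ref{hadalemma} twice.

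The interpolation I would use is $A \subseteq A_{1,1+\varepsilon+b_\varepsilon} \subseteq B$, which telescopes as
$$\lambda_A - \lambda_B = \bigl[\lambda(A_{1,1+\varepsilon}) - \lambda(A_{1,1+\varepsilon+b_\varepsilon})\bigr] + \bigl[\lambda(A_{1,1+\varepsilon+b_\varepsilon}) - \lambda(A_{1-a_\varepsilon,1+\varepsilon+b_\varepsilon})\bigr].$$
With the functions $\Phi_1$ and $\Phi_2$ of Lemma \ref{hadalemma} (the latter built using the given sequence $\{b_\varepsilon\}$), the first bracket equals $\Phi_1(\varepsilon) - \Phi_1(\varepsilon+b_\varepsilon) = -\int_\varepsilon^{\varepsilon+b_\varepsilon} \Phi_1'(t)\,dt$ and the second equals $\Phi_2(0) - \Phi_2(a_\varepsilon) = -\int_0^{a_\varepsilon} \Phi_2'(t)\,dt$. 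Both are positive since $\Phi_1, \Phi_2$ are decreasing (domain monotonicity again).

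The quantitative step is then to insert the bounds from Lemma \ref{hadalemma}. For $\varepsilon$ small enough that $\varepsilon + b_\varepsilon \leq 1$ and $a_\varepsilon \leq 1/4$, the integration variable in the first integral stays in $[\varepsilon, \varepsilon+b_\varepsilon] \subseteq [\varepsilon, 2\varepsilon]$, so $|\Phi_1'(t)| \lesssim \varepsilon^{-3}$; in the second integral, $\varepsilon + b_\varepsilon + t \asymp \varepsilon$ throughout, so $|\Phi_2'(t)| \lesssim \varepsilon^{-3}$. Hence
$$\lambda_A - \lambda_B \;\leq\; C(n)\bigl(b_\varepsilon\cdot \varepsilon^{-3} + a_\varepsilon \cdot \varepsilon^{-3}\bigr) \;\leq\; C(n)(C_1 + C_2),$$
which is the required uniform bound.

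There is no genuine obstacle here: everything reduces to integrating the derivative estimate of Lemma \ref{hadalemma}. What is worth emphasizing is that the scale $\varepsilon^3$ in the hypotheses on $a_\varepsilon$ and $b_\varepsilon$ is precisely matched to the $\varepsilon^{-3}$ blow-up of $|\Phi_i'|$ near the thin-annulus regime; a perturbation of any larger order would cause the telescoping integrals to diverge, so the exponent in the hypothesis is the natural one rather than an artifact of the proof.
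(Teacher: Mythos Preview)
Your proof is correct and essentially identical to the paper's: both telescope through the intermediate annulus $A_{1,1+\varepsilon+b_\varepsilon}$ and control each difference via Lemma~\ref{hadalemma}, with the paper invoking the mean value theorem where you integrate $\Phi_i'$ directly. The only cosmetic difference is that the paper records the slightly sharper intermediate bound $\lambda_A-\lambda_B\lesssim b_\varepsilon/\varepsilon^3+a_\varepsilon/(\varepsilon+b_\varepsilon)^3$ before using $a_\varepsilon,b_\varepsilon\lesssim\varepsilon^3$.
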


\begin{proof}
    The inequality $\lambda_A-\lambda_B>0$ is trivially due to domain monotonicity of the Dirichlet eigenvalue (Proposition \ref{lambda}). 
    
    For the other inequality, the mean value theorem guarantees that for some $\xi_1\in (\varepsilon,\varepsilon+b_{\varepsilon})$ and $\xi_2\in (0,a_{\varepsilon})$,
    \begin{align*}
        0<\lambda_A-\lambda_B &\leq  |\lambda_A-\lambda(A_{1,1+\varepsilon+b_{\varepsilon}})|+|\lambda(A_{1,1+\varepsilon+b_{\varepsilon}})-\lambda_B|
        \\ & =|\Phi_1'(\xi_1)|\cdot b_{\varepsilon}+|\Phi'_2(\xi_2)|\cdot a_{\varepsilon}.
    \end{align*}
   Thus by Lemma \ref{hadalemma}, $$\lambda_A-\lambda_B\lesssim \frac{b_{\varepsilon}}{\varepsilon^3}+\frac{a_{\varepsilon}}{(\varepsilon+b_{\varepsilon})^3},$$
   and we finish the proof by using the assumption that $a_{\varepsilon},b_{\varepsilon}\lesssim \varepsilon^3$.
\end{proof}

\begin{Rmk}
    \normalfont
    Since $a_{\varepsilon}\in (0,C_1\varepsilon^3)$ and $b_{\varepsilon}\in (0,C_2\varepsilon^3)$, it is clear that $\varepsilon$ in Lemma \ref{eiggap} can be chosen to be sufficiently small depending only on $C_1$ and $C_2$.   
\end{Rmk}

\subsection{Volume doubling with respect to the measure \texorpdfstring{$\varphi_U^2$}{TEXT}}
\label{VDsubsection}

In this section, we show in Theorem \ref{vdthm} that annular domains of the form $(a,b)\times U_0$ are uniformly volume doubling when they are thin (e.g. $b/a\in (1,2]$).  

\begin{Dfn} 
    \normalfont \label{vddefn} 
    Suppose $U$ is a bounded domain of some Riemannian manifold with Riemannian measure $\mu$. Let $\widetilde{U}$ denote the closure of $U$ with respect to the geodesic distance $d_U$ on $U$. For $x\in \widetilde{U}$ and $r>0$, let $B_{\widetilde{U}}(x,r)\subseteq \widetilde{U}$ denote a geodesic ball in $\widetilde{U}$, with the convention that the radii of balls are taken to be minimal. We say that $U$ is \textit{$\varphi_U^2$-volume doubling}, or $(\varphi_U^2\text{-VD)}$, if there is a constant $D_0\geq 1$ such that, for all $x\in \widetilde{U}$ and $r>0$, 
    $$\int_{B_{\widetilde{U}}(x,2r)}\varphi_U^2d\mu\leq D_0\int_{B_{\widetilde{U}}(x,r)}\varphi_U^2d\mu.$$
\end{Dfn}

It follows from \cite{lierllsc} that inner uniform domains $U$ in large classes of metric measure spaces are $\varphi^2_U$-volume doubling; see Theorem 6.12(ii), \cite{lierllsc}. We rewrite this result in lesser generality in Theorem \ref{lierlVD}. 

\begin{Theo}    
    \label{lierlVD}
    Suppose $U_0\subseteq \mathbb{S}^{n-1}$ is a $(C_0,c_0)$-inner uniform domain with nonempty boundary. Then $U_0$ satisfies $(\varphi_U^2\textup{-VD})$, with volume doubling constant depending only on $n,C_0,c_0$. The same holds for $U_0=\mathbb{S}^{n-1}$ and  $\varphi_{U_0}^2\equiv 1/\sqrt{\sigma_{n-1}(\mathbb{S}^{n-1})}$, with constants depending only on $n$.
\end{Theo}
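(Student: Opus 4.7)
The plan is to treat the two cases of the theorem statement separately, since when $U_0 = \mathbb{S}^{n-1}$ there is no boundary and the principal ``Dirichlet'' eigenfunction is simply the normalized constant function, whereas when $\partial U_0 \neq \emptyset$ we genuinely have a weighted measure with a nontrivial weight.

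For the case where $U_0 \subseteq \mathbb{S}^{n-1}$ is a proper $(C_0,c_0)$-inner uniform subdomain with nonempty boundary, the approach is to apply Theorem 6.12(ii) of \cite{lierllsc} as a black box. To do this, one first verifies that the ambient space $(\mathbb{S}^{n-1}, d_{\mathbb{S}^{n-1}}, \sigma_{n-1})$ meets the standing hypotheses of that theorem: since $\mathbb{S}^{n-1}$ is a smooth compact Riemannian manifold of constant positive sectional curvature (in particular, nonnegative Ricci curvature), Bishop--Gromov yields volume doubling at all scales up to the diameter $\pi$ with constants depending only on $n$, and Buser's inequality gives the corresponding Poincar\'e inequality with constants depending only on $n$. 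Then the quoted theorem produces the $\varphi_{U_0}^2$-volume doubling property with a constant $D_0$ that, tracing through the dependencies in \cite{lierllsc}, depends only on $n$, $C_0$, and $c_0$.

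For the case $U_0 = \mathbb{S}^{n-1}$, the assertion is essentially trivial: $\varphi_{U_0}$ is a constant, so the weighted measure $\varphi_{U_0}^2\, d\sigma_{n-1}$ is a constant multiple of the surface measure $d\sigma_{n-1}$, and $(\varphi_{U_0}^2\text{-VD})$ reduces to the ordinary volume doubling property of $(\mathbb{S}^{n-1}, d_{\mathbb{S}^{n-1}}, \sigma_{n-1})$. This again follows from Bishop--Gromov, noting that the intrinsic diameter is $\pi$ so that only finitely many doubling steps (depending on $n$) are needed to cover all scales, and the constant depends only on $n$.

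The only step requiring any real care is the first one, specifically making sure the inner uniformity constants and ambient geometry of $\mathbb{S}^{n-1}$ translate into the precise hypothesis list of Theorem 6.12(ii) of \cite{lierllsc}, and tracking that the output doubling constant genuinely depends only on $n$, $C_0$, $c_0$ and not on additional features of $U_0$ such as its diameter. Once that bookkeeping is done, no further calculation is required.
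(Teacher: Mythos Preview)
Your proposal is correct and matches the paper's treatment: the paper does not prove this theorem but simply cites it as a restatement of Theorem~6.12(ii) of \cite{lierllsc}, and your handling of the $U_0=\mathbb{S}^{n-1}$ case as ordinary (unweighted) volume doubling is exactly what is intended. Your added verification that $\mathbb{S}^{n-1}$ satisfies the ambient-space hypotheses via Bishop--Gromov and Buser is appropriate bookkeeping that the paper leaves implicit.
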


In order to prove volume doubling on a domain of the form $U=(a,b)\times U_0\subseteq [0,\infty)\times \mathbb{S}^{n-1}$ in polar coordinates, it is natural to prove that the metric on $U$ is controlled by the metrics on $(a,b)$ and $U_0$.

\begin{Lemma}
\label{distcomp0}
Let $U_0\subseteq \mathbb{S}^{n-1}$ be a domain and set $U=(1,1+\varepsilon)\times U_0 \subseteq \mathbb{R}^n$. For $x,y\in U$, by abuse of notation we write $d_{U_0}(x,y)$ in place of  $d_{U_0}(x/\|x\|,y/\|y\|)$. Suppose $\varepsilon\in (0,1]$. Then 
    \begin{align}
        \label{distcomp}
        \max\Big\{d_{U_0}(x,y),\Big|\|x\|-\|y\|\Big|\Big\}\leq d_U(x,y)\leq 4\max\Big\{d_{U_0}(x,y),\Big|\|x\|-\|y\|\Big|\Big\}.
    \end{align}
    
\end{Lemma}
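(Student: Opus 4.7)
The plan is to prove each inequality by examining arc length in polar coordinates. A curve $\gamma:[0,1]\to U$ may be written as $\gamma(t)=(r(t),\theta(t))$ with $r(t)\in(1,1+\varepsilon)$ and $\theta(t)\in U_0\subseteq\mathbb{S}^{n-1}$, in which case the Euclidean speed is
\[
|\dot\gamma(t)|=\sqrt{\dot r(t)^2+r(t)^2\,|\dot\theta(t)|_{\mathbb{S}^{n-1}}^2}.
\]
Since $d_U(x,y)$ is the infimum of Euclidean lengths of curves in $U$ joining $x$ to $y$, this formula drives both bounds.

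For the lower bound, I would estimate the speed by each of its two summands. On the one hand, $|\dot\gamma(t)|\geq|\dot r(t)|$, so the length of $\gamma$ is at least $|r(1)-r(0)|=|\|x\|-\|y\||$. On the other hand, $|\dot\gamma(t)|\geq r(t)|\dot\theta(t)|_{\mathbb{S}^{n-1}}\geq|\dot\theta(t)|_{\mathbb{S}^{n-1}}$ because $r(t)\geq 1$, so the length of $\gamma$ is at least the $\mathbb{S}^{n-1}$-length of $\theta(\cdot)$, which in turn is at least $d_{U_0}(x,y)$. Taking the infimum over curves gives the left-hand inequality of (\ref{distcomp}).

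For the upper bound, the strategy is to concatenate a radial segment and an angular arc. Given $\delta>0$, choose a curve $\alpha$ in $U_0$ from $x/\|x\|$ to $y/\|y\|$ whose $\mathbb{S}^{n-1}$-length is at most $d_{U_0}(x,y)+\delta$. Travel first from $x=(\|x\|,x/\|x\|)$ to $(\|y\|,x/\|x\|)$ along the radial ray (length $|\|x\|-\|y\||$, and the path stays in $U$ because the angular coordinate is fixed at $x/\|x\|\in U_0$), and then from $(\|y\|,x/\|x\|)$ to $y$ along the lift of $\alpha$ at radius $\|y\|$ (length $\|y\|\cdot\text{length}(\alpha)\leq(1+\varepsilon)(d_{U_0}(x,y)+\delta)\leq 2(d_{U_0}(x,y)+\delta)$, and the path stays in $U$ since $\alpha\subseteq U_0$). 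Adding the two contributions and letting $\delta\to0$ yields
\[
d_U(x,y)\leq |\|x\|-\|y\||+2\,d_{U_0}(x,y)\leq 3\max\{d_{U_0}(x,y),|\|x\|-\|y\||\},
\]
which is stronger than the stated bound with factor $4$.

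There is no real obstacle: the only thing to check is that the constructed concatenated curve actually lies inside $U=(1,1+\varepsilon)\times U_0$, which is immediate from the construction (radius stays in $(1,1+\varepsilon)$ on each leg, angle stays in $U_0$ on each leg). The appearance of the factor $1+\varepsilon\leq 2$ in the angular length is the only place the hypothesis $\varepsilon\in(0,1]$ is used; likewise, the lower bound $r(t)\geq 1$ uses the inner radius being $1$ (the general case $(a,b)$ can then be recovered from the scaling invariance recorded in Proposition~\ref{scaling}).
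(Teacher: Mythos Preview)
Your argument is correct. The lower bound is essentially the same idea as the paper's, though you phrase it via curve--length estimates in polar coordinates while the paper uses the dual variational formula $d_U(x,y)=\sup\{u(x)-u(y):|\nabla u|\le 1\}$ applied to functions depending only on $\theta$ (respectively only on $r$). For the upper bound your construction differs: the paper lifts a near-geodesic $\overline\gamma$ in $U_0$ to a single path in $U$ by linearly interpolating the radius, obtaining $L(\gamma)\le 2L(\overline\gamma)+2\big|\|x\|-\|y\|\big|$ and hence the constant $4$; you instead concatenate a radial segment with an angular arc at fixed radius, which is slightly cleaner and yields the sharper constant $3$. Both constructions stay inside $U$ for the same reasons, and both use $\varepsilon\le 1$ only to bound $r\le 2$ in the angular contribution.
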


\begin{proof}
    Let $\widetilde{\mathbf{g}}=(\widetilde{g}_{ij})$ and $(\theta_1,\theta_2,...,\theta_{n-1})$ denote the Riemannian metric and local coordinates on $\mathbb{S}^{n-1}$, respectively. For any function $u(r,\theta)=u(\theta)$ depending only on $\theta\in\mathbb{S}^{n-1}$, 
    \begin{align*}
    |\nabla u|^2=\Big(\frac{\partial u}{\partial r}\Big)^2+\frac{\widetilde{g}^{ij}}{r^2}\cdot \frac{\partial u}{\partial \theta^i}\frac{\partial u}{\partial \theta^j}\leq \widetilde{g}^{ij}\cdot \frac{\partial u}{\partial \theta^i}\frac{\partial u}{\partial \theta^j}= |\nabla u|^2_{\widetilde{\mathbf{g}}},
    \end{align*}
    and therefore by (\ref{variationaldist}),
    \begin{align*}
        d_U(x,y)\geq \sup\{u(x)-u(y):u\in C(U_0)\cap \mathcal{F}_{\text{loc}}(U_0),|\nabla u|_{\widetilde{\mathbf{g}}}\leq 1\}=d_{U_0}(x,y).
    \end{align*}
    Similarly, by considering radial functions, $d_U(x,y)\geq \big|\|x\|-\|y\|\big|$. This gives the left-hand inequality in (\ref{distcomp}). 

    Now let $x,y\in U$ be any pair of points and let $\overline{\gamma}:[0,1]\to U_0$ be any path with $\overline{\gamma}(0)=x/\|x\|,\overline{\gamma}(1)=y/\|y\|$. Lift $\overline{\gamma}$ up to a path $\gamma$ in $U$ with $\gamma(0)=x,\gamma(1)=y$ by defining $$\gamma(t)=((1-t)\|x\|+t\|y\|,\overline{\gamma}(t))=:(\gamma_0(t),\gamma_1(t),...,\gamma_{n-1}(t)).$$ Letting $L(\gamma)$ denote the length of a curve $\gamma$, we compute
    \begin{align*}
        L(\gamma) & = \int_0^{1}\sqrt{\sum_{i,j=1}^{n-1}r^2\widetilde{g}_{ij}\frac{d\gamma_i}{dt}\cdot \frac{d\gamma_j}{dt}+\Big(\frac{d\gamma_0}{dt}\Big)^2} dt
        \\ & \leq 2\int_0^{1}\sqrt{\widetilde{g}_{ij}\frac{d\gamma_i}{dt}\cdot \frac{d\gamma_j}{dt}}+\Big|\frac{d\gamma_0}{dt}\Big| dt \\ & = 2L(\overline{\gamma})+2\big|\|x\|-\|y\|\big|.
    \end{align*}
    Thus, $d_U(x,y)\leq 2L(\overline{\gamma})+2\big|\|x\|-\|y\|\big|$. Taking infimum over all paths $\overline{\gamma}$ that stay in $U_0$, we get $d_U(x,y)\leq 2d_{U_0}(x,y)+2\big|\|x\|-\|y\||\leq 4\max\{d_{U_0}(x,y),\big|\|x\|-\|y\|\big|\}.$
\end{proof}

We are now ready to give the proof of volume doubling with respect to $\varphi_U^2$.

\begin{Theo}
    \label{vdthm}
    Let $U_0=\mathbb{S}^{n-1}$ or let $U_0\subseteq \mathbb{S}^{n-1}$ be a $(C_0,c_0)$-inner uniform domain. In polar coordinates, consider the domain $U=(a,b)\times U_0$, where $b/a\in (1,2]$. Then $U$ satisfies $(\varphi_{U}^2\textup{-VD})$, with volume doubling constant depending only on $n$, $C_0,c_0$, and a uniform upper bound on $\lambda(U_0)$.  
\end{Theo}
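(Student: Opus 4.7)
The plan is to reduce $(\varphi_U^2\text{-VD})$ to a near-product structure, exploiting the thin regime $b/a\in(1,2]$. First, by the radial scaling of Proposition~\ref{scaling}, I would reduce to $a=1$, $b=1+\varepsilon$ with $\varepsilon\in(0,1]$; scaling preserves $(\varphi_U^2\text{-VD})$, the inner uniformity constants of $U_0$, and the upper bound on $\lambda(U_0)$. In this normalization Lemma~\ref{lierlcompare} yields the factorization
\[
\varphi_U(r,\theta)^2 \;\asymp\; \varphi_I(r)^2\,\varphi_{U_0}(\theta)^2,
\]
with $I=(1,1+\varepsilon)$ and implicit constants depending only on $n$ and the upper bound on $\lambda(U_0)$. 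Since $r\in[1,2]$ the polar Jacobian $r^{n-1}$ is comparable to a constant depending only on $n$, so the weighted measure $\varphi_U^2\,dx$ is comparable to the \emph{product} measure $(\varphi_I^2\,dr)\otimes(\varphi_{U_0}^2\,d\sigma_{n-1})$ on $I\times U_0$.

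On the geometric side, Lemma~\ref{distcomp0} gives the two-sided comparison
\[
\max\bigl\{d_{U_0}(\theta_x,\theta_y),\,|r_x-r_y|\bigr\}\;\leq\;d_U(x,y)\;\leq\;4\max\bigl\{d_{U_0}(\theta_x,\theta_y),\,|r_x-r_y|\bigr\},
\]
uniformly in location and valid also on the geodesic closure $\widetilde{U}$. This immediately yields the product-ball sandwich
\[
B_I(r_x,s/4)\times B_{\widetilde{U_0}}(\theta_x,s/4)\;\subseteq\; B_{\widetilde{U}}(x,s)\;\subseteq\; B_I(r_x,s)\times B_{\widetilde{U_0}}(\theta_x,s)
\]
for every $x=(r_x,\theta_x)\in\widetilde{U}$ and $s>0$, with the convention that intervals in $I$ are intersected with $(1,1+\varepsilon)$.

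Combining the measure and ball comparisons, the ratio of $\varphi_U^2$-volumes of concentric balls of radii $s$ and $2s$ in $\widetilde{U}$ is controlled, up to constants, by the product of the analogous ratios on $I$ and on $U_0$. Volume doubling for $(\varphi_{U_0}^2\,d\sigma_{n-1})$ is precisely Theorem~\ref{lierlVD} (and is trivial when $U_0=\mathbb{S}^{n-1}$). For the one-dimensional factor I would check $(\varphi_I^2\text{-VD})$ for $I=(1,1+\varepsilon)$ by direct computation: after rescaling $I$ to $(0,1)$, the measure $\varphi_I^2\,dr$ becomes (up to normalization) $\sin^2(\pi u)\,du$, which behaves like $u^2\,du$ near $0$ and like $(1-u)^2\,du$ near $1$, and is easily shown to be doubling with an absolute constant.

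The main subtlety I anticipate is ensuring the product-ball sandwich is uniform across all locations in $\widetilde{U}$, including at the radial boundary $r\in\{1,1+\varepsilon\}$ and on $\partial U_0$, where one must reconcile geodesic closures in $\widetilde{U}$ with those in $I$ and $\widetilde{U_0}$ separately. This is exactly what Lemma~\ref{distcomp0} secures, since its comparison has absolute constants independent of the base point. Once this is in place, assembling the two factor-wise doubling estimates gives the claim, with constants depending only on $n$, $(C_0,c_0)$, and the upper bound on $\lambda(U_0)$.
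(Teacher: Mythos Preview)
Your proposal is correct and follows essentially the same route as the paper: scale to $a=1$, use Lemma~\ref{distcomp0} for the product-ball sandwich, use Lemma~\ref{lierlcompare} (with $r^{n-1}\asymp 1$) to factor the measure, and then invoke Theorem~\ref{lierlVD} on $U_0$ together with one-dimensional doubling for $\varphi_I^2\,dr$. The only cosmetic difference is that the paper cites Theorem~\ref{boxHK} for the interval factor rather than computing it directly.
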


\begin{proof}
    In this proof, we will often write $z$ to denote either $z\in U$ or $z/\|z\|\in U_0$ when there is no ambiguity. Also, observe that if $U\subseteq \mathbb{R}^n$ satisfies $(\varphi_U^2\textup{-VD})$, then $cU$ satisfies $(\varphi_{cU}^2\textup{-VD})$ (with the same constant), so by scale-invariance we may assume $a=1$ and $b=1+\varepsilon$. By Lemma \ref{distcomp0}, for any $z\in \widetilde{U}$ and $s>0$, we have
    \begin{align}
        \label{vd0}
        \Big(\|z\|-\frac{s}{4},\|z\|+\frac{s}{4}\Big)\times B_{\widetilde{U_0}}(z,s/4)\subseteq B_{\widetilde{U}}(z,s)\subseteq (\|z\|-s,\|z\|+s)\times B_{\widetilde{U_0}}(z,s).
    \end{align}
    Thus, by (\ref{vd0}) and Lemma \ref{lierlcompare},
    \begin{align}
        \nonumber \int_{B_{\widetilde{U}}(z,2s)}\varphi_{U}^2 dx & \lesssim \int_{B_{\widetilde{U_0}}(z,2s)}\varphi_{U_0}^2 d\sigma_{n-1}\cdot \int_{\|z\|-2s}^{\|z\|+2s} \frac{\min\{r-1,1+\varepsilon-r\}^2}{\varepsilon^{3}}\cdot r^{n-1}dr
        \\ \label{vd1}& \lesssim \int_{B_{\widetilde{U_0}}(z,s/4)}\varphi_{U_0}^2d\sigma_{n-1}\cdot \int_{\|z\|-s/4}^{\|z\|+s/4} \frac{\min\{r-1,1+\varepsilon-r\}^2}{\varepsilon^{3}}\cdot r^{n-1}dr.
    \end{align}
    We interpret $\|z\|+2s$ as $\min\{\|z\|+2s,1+\varepsilon\}$ and $\|z\|-2s$ as $\max\{\|z\|-2s,1\}$, and likewise for $\|z\|\pm s/4$. The inequality (\ref{vd1}) follows because of Theorem \ref{lierlVD}, and because of the following fact: if $\varphi_{(a,b)}$ denotes the principal Dirichlet Laplacian eigenfunction of the bounded interval $(a,b)$, then $(a,b)$ is $\varphi_{(a,b)}^2$-volume doubling with a volume doubling constant uniform over all bounded intervals, see e.g. Theorem \ref{boxHK}. Using (\ref{vd0}) again on (\ref{vd1}), we obtain
    \begin{align*}
        \int_{B_{\widetilde{U}}(z,2s)}\varphi_{U}^2 dx\lesssim  \int_{B_{\widetilde{U}}(z,s)}\varphi_{U}^2 dx,
    \end{align*}
    as desired.
\end{proof}

\subsection{Poincar\'{e} inequalities with respect to the measure \texorpdfstring{$\varphi_U^2$}{TEXT}}
\label{PIsubsection}

In this subsection, we prove that all annular domains $U$ of interest, including those in Theorem \ref{illustration} and Section \ref{examples}, all satisfy a family of scale-invariant Poincar\'{e} inequalities with respect to $\varphi_U^2$. We note that the proof to be given makes use of $(\varphi_U^2\textup{-VD})$, which was proved earlier in Theorem \ref{vdthm}, Section \ref{VDsubsection}.

\begin{Dfn}
    \label{pidefn} 
    In the setting of Definition \ref{vddefn}, we say that $U$ satisfies $\varphi_U^2$\textit{-Poincar\'{e} inequalities}, or $(\varphi_U^2\textup{-PI})$, if there exists constants $P\geq 1$ and $C\geq 1$ such that  
    \begin{align}
    \label{PIphi}
    \forall f\in W^{1,2}(U,\varphi_U^2 d\mu),\hspace{0.1in}\forall B_U(x,r)\subseteq \widetilde{U},\hspace{0.2in}\int_{B_U(x,r)}|f-f_B|^2 \varphi_U^2 d\mu\leq Pr^2\int_{B_U(x,Cr)}|\nabla f|^2 \varphi_U^2 d\mu.
\end{align}
\end{Dfn}

It is known from \cite{lierllsc} that Poincar\'{e} inequalities of the form (\ref{PIphi}) holds for inner uniform domains of $U\subseteq \mathbb{S}^{n-1}$, or even just for locally inner uniform domains, as Theorem \ref{lierlPI} below.

\begin{Theo}
    \label{lierlPI} Let $U_0\subseteq \mathbb{S}^{n-1}$ be a domain, and, in polar coordinates, consider the domain $U=(1,1+\varepsilon)\times U_0$.
    \begin{enumerate}
        \item  Suppose $U_0$ is a $(C_0,c_0)$-inner uniform domain. $U_0$ satisfies $(\varphi_{U_0}^2\textup{-PI})$ with $C=1$ and $P\geq 1$ depending on $C_0,c_0,n$. The same holds for $U_0=\mathbb{S}^{n-1}$ and $\varphi_{U_0}^2\equiv 1/\sqrt{\sigma_{n-1}(\mathbb{S}^{n-1})}$, with $C=1$ and $P\geq 1$ depending only on $n$.
        \item Suppose $U$ is $(C_0',c_0')$-locally inner uniform up to scale $\varepsilon$ (see Definition \ref{liu}).  Then, $U$ satisfies $(\varphi_U^2\textup{-PI})$ up to scale $\varepsilon$. That is, fixing an auxiliary integer $N$, (\ref{PIphi}) holds for all $r\in (0,N\varepsilon)$, with $C=1$ and $P$ depending only on $n,C'_0,c'_0,N$. 
    \end{enumerate}
\end{Theo}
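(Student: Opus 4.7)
The proof is essentially a direct invocation of the general $\varphi^2$-Poincar\'{e} inequality results of Lierl and Saloff-Coste \cite{lierllsc}, applied in the two distinct geometric contexts of part (1) and part (2). The plan is therefore less about new arguments and more about verifying that the hypotheses of the relevant theorems in \cite{lierllsc} are met, and matching notation.

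For part (1), I would invoke the main $\varphi^2$-Poincar\'{e} inequality theorem for inner uniform domains of an ambient Riemannian manifold of nonnegative Ricci curvature (see \cite{lierllsc}, Theorem 7.9 or its streamlined Riemannian form). The relevant ambient space here is $\mathbb{S}^{n-1}$, which has constant positive sectional curvature and in particular satisfies classical volume doubling and Poincar\'{e} inequalities at all scales. Hence, for a $(C_0,c_0)$-inner uniform subdomain $U_0\subseteq\mathbb{S}^{n-1}$, the general theorem directly produces $(\varphi_{U_0}^2\textup{-PI})$ with constants $P$ depending only on $n,C_0,c_0$. In the borderline case $U_0=\mathbb{S}^{n-1}$, there is no boundary on which to impose Dirichlet conditions; the \textit{principal Dirichlet} eigenfunction degenerates to the constant $1/\sqrt{\sigma_{n-1}(\mathbb{S}^{n-1})}$, and the statement reduces to the classical Neumann Poincar\'{e} inequality on the sphere, whose constant depends only on $n$.

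For part (2), I would invoke the local version of the same theorem from \cite{lierllsc}: if a domain is $(C_0',c_0')$-locally inner uniform up to some scale $R$, one still obtains the Poincar\'{e} inequality for all geodesic balls of radius at most a constant multiple of $R$. Here $R=\varepsilon$, and the fixed auxiliary integer $N$ in the statement simply tracks the degeneration of the constant $P$ as one uses the local PI on balls $r\in(0,N\varepsilon)$ rather than $r\in(0,\varepsilon)$; a standard covering/chaining argument combined with the $(\varphi_U^2\textup{-VD})$ already established in Theorem \ref{vdthm} upgrades an inequality at scale $\varepsilon$ to one at scale $N\varepsilon$, at the cost of enlarging $P$ in a manner depending only on $n,C_0',c_0',N$. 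Crucially, the local inner uniformity hypothesis of Definition \ref{liu} is precisely what is needed to apply the boundary Harnack / chaining machinery of \cite{lierllsc} inside each ball $B_U(x,r)$, since any two points there can be connected by a $(C_0',c_0')$-inner uniform curve staying in $U$.

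The main obstacle is expository rather than mathematical: one must verify that the formulation of $(\varphi_U^2\textup{-PI})$ in Definition \ref{pidefn} (with dilation constant $C=1$) matches the one proved in \cite{lierllsc} (where the PI is typically stated with a larger dilation factor), and then use the $\varphi_U^2$-volume doubling of Theorem \ref{vdthm} together with Jensen's inequality and a standard Whitney-type covering of $B_U(x,r)$ to absorb the dilation factor into $P$. Once this bookkeeping is in place, both parts of Theorem \ref{lierlPI} follow by direct reference to \cite{lierllsc}; no new geometric input is required beyond what is already encoded in the inner uniformity hypothesis on $U_0$ and the local inner uniformity of $U$ up to scale $\varepsilon$.
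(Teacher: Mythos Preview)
Your proposal is correct and matches the paper's approach: the paper does not give an independent proof of Theorem \ref{lierlPI} but simply presents it as a known consequence of \cite{lierllsc}, which is exactly what you do. One small caution: in part (2) you appeal to Theorem \ref{vdthm} for the chaining from scale $\varepsilon$ to scale $N\varepsilon$, but Theorem \ref{vdthm} assumes $U_0$ is $(C_0,c_0)$-inner uniform, whereas part (2) of Theorem \ref{lierlPI} only assumes local inner uniformity of $U$; the cleaner route is to note that the local results of \cite{lierllsc} already provide $\varphi_U^2$-volume doubling at scales $\lesssim\varepsilon$ under that same local hypothesis, which is all the chaining needs.
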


Our goal is to prove the following theorem, which improves the second item of Theorem \ref{lierlPI}. 

\begin{Theo}
    \label{PIthm}
    Let $U_0\subseteq \mathbb{S}^{n-1}$ be a domain. For $\varepsilon\in (0,1]$, put $U=(1,1+\varepsilon)\times U_0\subseteq \mathbb{R}^n$. Suppose $U_0$ is $(C_0,c_0)$-inner uniform, and $U$ is $(C_0,c_0)$-locally inner uniform up to scale $\varepsilon$. Then, $U$ satisfies $\varphi_U^2$-Poincar\'{e} inequalities (\ref{PIphi}) for any ball $B_U(x,r)\subseteq \widetilde{U}$, with $C$ an absolute constant, and with $P$ depending on $n,C_0,c_0,$ and a uniform upper bound on $\lambda(U_0)$.
\end{Theo}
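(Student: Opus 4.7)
The plan is to handle small and large scales separately. By Proposition \ref{scaling} we may normalize $a=1$, $b=1+\varepsilon$; the geodesic diameter of $U$ is then $\lesssim \textup{diam}(U_0)+\varepsilon \lesssim 1$. For any ball $B_U(x,r)$ with $r \leq N\varepsilon$ (for a fixed integer $N$), the conclusion is already contained in Theorem \ref{lierlPI}(2). The remaining task is to extend $(\varphi_U^2\textup{-PI})$ from scale $\varepsilon$ up to the geodesic diameter of $U$, uniformly in the constants.

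For this extension, the natural tool is the discretization/chaining technique of Coulhon and Saloff-Coste \cite{coulsc}, which promotes a fixed-scale Poincar\'{e} inequality to all larger scales on a doubling metric measure space. Both prerequisites are in place: $(\varphi_U^2\textup{-VD})$ at all scales (Theorem \ref{vdthm}) and $(\varphi_U^2\textup{-PI})$ at scale $\varepsilon$ (Theorem \ref{lierlPI}(2)). Concretely, for a geodesic ball $B = B_U(x,r)$ with $r \geq N\varepsilon$, I would choose a maximal $\varepsilon/10$-separated set $\{x_i\}\subset B$ and set $B_i := B_U(x_i,\varepsilon)$; by $(\varphi_U^2\textup{-VD})$ these cover $B$ with bounded multiplicity in $\varphi_U^2\,dx$. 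Joining $B_i,B_j$ by an edge whenever $d_U(x_i,x_j) \leq 3\varepsilon$ yields a graph $G$ in which any two cells lying in $B$ are connected by a chain of length $\lesssim r/\varepsilon$. Writing $f - f_B$ as a telescoping expansion along such chains, applying the local PI on each $B_i$ as well as on slightly enlarged balls containing adjacent pairs, and resumming via volume doubling then produces the target estimate $\int_B |f - f_B|^2 \varphi_U^2\,dx \lesssim r^2 \int_{CB} |\nabla f|^2 \varphi_U^2\,dx$ with $C$ an absolute constant.

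The main obstacle will be maintaining uniformity of all constants. Two points deserve care. First, for each adjacent pair $(B_i,B_j)$ in the chaining graph, the $\varepsilon$-scale Poincar\'{e} inequality must apply on a slightly enlarged ball containing both $B_i$ and $B_j$, which requires $U$ to be locally inner uniform at scale $\varepsilon$ on that region; this is precisely the hypothesis $R(U,z)\geq \varepsilon$ of Definition \ref{liu}. Second, the constants produced by the Coulhon--Saloff-Coste bootstrap depend polynomially on the doubling constant from $(\varphi_U^2\textup{-VD})$ and on the small-scale Poincar\'{e} constant, so the uniform bounds of Theorems \ref{vdthm} and \ref{lierlPI}(2) (depending only on $n, C_0, c_0$ and an upper bound on $\lambda(U_0)$) transfer directly to the final constant $P$. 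Once these verifications are in place, the discretization yields $(\varphi_U^2\textup{-PI})$ at all scales with the claimed parameter dependence.
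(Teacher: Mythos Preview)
Your two-scale architecture—settle $r\lesssim\varepsilon$ via Theorem~\ref{lierlPI}(2), then extend to $r\gtrsim\varepsilon$ by discretization—matches the paper. The gap is in how you run the large-scale step.

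You describe the Coulhon--Saloff-Coste method as one that ``promotes a fixed-scale Poincar\'e inequality to all larger scales on a doubling metric measure space,'' and then outline a telescoping along chains of $\varepsilon$-balls of length $M\asymp r/\varepsilon$. That is not what \cite{coulsc} does, and the telescoping as stated does not close. After Cauchy--Schwarz along a chain and swapping the order of summation, you must control $\sum_{i:\,B_k\in\mathrm{chain}(i)}m_U(B_i)/m_U(B_k)$; with only doubling and no further structure this costs a power of $r/\varepsilon$, so the resulting constant behaves like $r^2(r/\varepsilon)^{\alpha}$ for some $\alpha>0$ and blows up as $\varepsilon\to 0$. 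The weight $\varphi_U^2$ makes matters worse, since $m_U(B_k)$ is far from constant across the net.

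What the paper actually does is transfer the \emph{global} Poincar\'e inequality from $U_0$ to $U$ by rough isometry at scale $\varepsilon$. The $\varepsilon$-net $X\subset U$ projects to a net $Y\subset U_0$ with the same graph structure and comparable vertex weights (Lemma~\ref{weightcomp}, via the factorization of $\varphi_U$ in Lemma~\ref{lierlcompare}). Because $U_0$ is inner uniform it satisfies $(\varphi_{U_0}^2\text{-PI})$ at \emph{all} scales (Theorem~\ref{lierlPI}(1)), and this yields a discrete Poincar\'e inequality on $(Y,m_{U_0})$, hence on $(X,m_U)$ (Lemma~\ref{dPI}). That discrete PI is the missing ingredient: it replaces your chain-counting step and, combined with the scale-$\varepsilon$ continuous PI on $U$ (Lemma~\ref{gradcomp}), gives the full inequality. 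Your list of prerequisites omits exactly item~(iv) of the paper's outline—$(\varphi_{U_0}^2\text{-PI})$ on the base—and without it there is no uniform control of the discretized graph.
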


\begin{Rmk}
    \normalfont
    \begin{enumerate}
        \item The requirement that $\varepsilon\in (0,1]$ is flexible and chosen for convenience; we can replace the upper bound $1$ by any fixed larger number.
        \item Recall the balls $B_U(x,r)$ are by definition subsets of $U$, and they are taken to be of minimal radius. Thus $B_U(x,r)=B_U(x,\text{diam}_U)=U$ for all sufficiently large $r$, and Theorem \ref{PIthm} can be described as either $(\varphi_U^2\textup{-PI})$ up to all scales, or $(\varphi_U^2\textup{-PI})$ up to scale $\text{diam}_U\asymp 1$.
        \item Once Theorem \ref{PIthm} is proven, we can in fact take $C=1$ in (\ref{PIphi}) at the cost of replacing $P$ by a larger constant $P'$. This is possible due to a general Whitney decomposition argument of Jerison \cite{jerison} (see also Section 5.3.2, \cite{asti}). Jerison's argument only requires the metric measure space $(U,d_U,\varphi_U^2)$ to be volume doubling, which we proved in Theorem \ref{vdthm}.
        \item Theorem \ref{PIthm} also holds when $U_0=\mathbb{S}^{n-1}$, in which case $\lambda(U_0)=0$ is trivially upper bounded, and all implied constants depend only on $n$.
    \end{enumerate}
    
\end{Rmk}

Before proving Theorem \ref{PIthm}, we give heuristics for its validity when $U=\{a<|x|<b\}$ is a thin annulus. By scale invariance of (\ref{PIphi}) with respect to scaling the domain $U$ in the radial direction, we may assume that  $U=\{1<|x|<1+\varepsilon\}$ with $\varepsilon\in (0,1]$.

First, observe that by Theorem \ref{lierlPI}, $(\varphi_U^2\textup{-PI})$ holds at all scales $r$ with $0<r\lesssim \varepsilon$. Even without Theorem \ref{lierlPI}, we can deduce $(\varphi_U^2\textup{-PI})$ at small scales as follows. Theorem \ref{annulusprofile} yields an explicit expression comparable to $\varphi_U$, which behaves like a tent function in the radial direction. At scale $r\lesssim \varepsilon$, one can imagine $B_U(x,r)$ as contained inside a small $n$-dimensional box with side lengths $\asymp \varepsilon$. The small box is $(C_0,c_0)$-inner uniform. (Note that the entire annulus fails to be uniformly inner uniform as $\varepsilon\in (0,1]$ approaches $0$.) Since $(\varphi_U^2\textup{-PI})$ is known to hold if $U$ is a $n$-dimensional box (Theorem \ref{boxHK}), and the $\varphi_U$ for such a box is also comparable to a tent function, we deduce (\ref{PIphi}) for the annulus at scale $\varepsilon$. In the above discussion, the normalization constant in  $\varphi_U$ so that $\|\varphi_U\|_{L^2(U)}=1$ is unimportant, since it is trivial that the validity of $\varphi_U^2$-Poincar\'{e} inequalities does not change by multiplying $\varphi_U$ by a constant.

To make the above reasoning for $U=\{a<|x|<b\}$ completely rigorous, one could follow the line of reasoning developed in \cite{gyryalsc} and \cite{lierllsc}, keeping all the calculations localized to scale $\varepsilon$ using local inner uniformity of the annulus $U$ at the same scale $\varepsilon$. Of course the arguments of \cite{gyryalsc} and \cite{lierllsc} are much more general and do not require explicit estimates on $\varphi_U$.  

To then obtain (\ref{PIphi}) for all scales, we follow a discretization argument of Coulhon and Saloff-Coste \cite{coulsc}. We imagine the the annulus $U=(1,1+\varepsilon)\times \mathbb{S}^{n-1}$ as being \textit{quasi-isometric} to $\mathbb{S}^{n-1}$, which will allow the techniques of \cite{coulsc} to deduce $(\varphi_U^2\textup{-PI})$ from $(\varphi_{\mathbb{S}^{n-1}}^2\textup{-PI})$, for scales larger than $\varepsilon$. To run this discretization argument in our setting, we need the following already proven facts:
\begin{enumerate}
    \item[(i)] $(\varphi_U^2\textup{-PI})$ at scales $\lesssim \varepsilon$ (Theorem \ref{lierlPI}, due to \cite{lierllsc});
    \item[(ii)]  $\varphi_U^2$-volume doubling (Theorem \ref{vdthm});
    \item[(iii)] The weighted measure $\varphi_U^2(x)dx$ is comparable to a well-behaved product measure of the form $f(r)g(\theta)r^{n-1}dr d\theta$ in polar coordinates (Lemma \ref{lierlcompare});
    \item[(iv)] $U_0\subseteq \mathbb{S}^{n-1}$ satisfies $(\varphi_{U_0}^2\textup{-VD})$ and $(\varphi_{U_0}^2\textup{-PI})$ (Theorems \ref{lierlVD} and \ref{lierlPI}, which are due to \cite{lierllsc}). 
\end{enumerate}

We now define some necessary notions from \cite{coulsc} to prove Theorem \ref{PIthm} in greater generality than just the annulus. 

Fix $\varepsilon \in (0,1]$ and let $U=(1,1+\varepsilon)\times U_0$, where $U_0\subseteq \mathbb{S}^{n-1}$ is a spherical domain with possibly nonempty boundary. 
Let $X:=\{x_i\}$ denote a maximal $\varepsilon$-separated subset of $U$. That is, 
for all $x_i,x_j\in X$, we have $d_U(x_i,x_j)\geq \varepsilon$, and $X$ is a maximal subset of $U$ with this property with respect to set inclusion. We call $X=\{x_i\}$ an $\varepsilon$\textit{-net} of $U$. 

Let $y_i=x_i/\|x_i\|$ be the projection of the points $x_i$ to $\mathbb{S}^{n-1}$; we will also call $Y:=\{y_i\}$ an $\varepsilon$-net of $U_0$. Technically speaking, $Y$ may not be exactly an $\varepsilon$-net of $U_0$, but thanks to Lemma \ref{distcomp0}, $Y$ satisfies the following two properties:
\begin{align}
    \label{Y1}& \forall y_i,y_j\in Y,\hspace{0.1in}d_{U_0}(y_i,y_j)\geq \frac{\varepsilon}{4},
    \\ \label{Y2}&\forall z\in U_0, \enspace \exists y_i\in Y,\enspace d_{U_0}(z,y_i)\leq \varepsilon,
\end{align}
which are enough for our argument to go through. 

We make $X=\{x_i\}$ into a weighted graph $(X,m_U)$ as follows. We take the vertex set to be $\{x_i\}$, and declare that two vertices $x_i,x_j$ are connected by an edge (denoted $x_i\sim x_j$) if $d_U(x_i,x_j)\leq 2\varepsilon$. The weight on each vertex $x_i$ is given by
$$m_U(x_i):=\int_{B_U(x_i,\varepsilon)}\varphi_U^2 dx.$$

Similarly, $Y=\{y_i\}$ also has a weighted graph structure $(Y,m_{U_0})$. We declare $y_i\sim y_j$ if and only if $x_i,x_j$ are connected by an edge in $X$. The weight on $y_i$ is given by
$$m_{U_0}(y_i):=\int_{B_{U_0}(y_i,\varepsilon)}\varphi_{U_0}^2 d\sigma_{n-1}.$$
We equip both $X$ and $Y$ with the graph distance, which we denote by $d(\cdot,\cdot)$. It is clear the graph structures of $X$ and $Y$ coincide, so a discrete ball $B(x,m)$ is the same in both $X,Y$. In fact, the weights on $X$ and $Y$ are also comparable, as the following lemma shows.

\begin{Lemma}
    \label{weightcomp}
    Suppose $\varepsilon \in (0,1]$. For any $x_i\in U$ and its projection $y_i\in U_0$, $m_U(x_i)\asymp m_{U_0}(y_i)$. The implied constants depend only on $n$ and a uniform upper bound on $\lambda(U_0)$.
\end{Lemma}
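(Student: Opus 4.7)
The plan is to use the factorization of $\varphi_U^2$ provided by Lemma \ref{lierlcompare} together with the ball sandwich of Lemma \ref{distcomp0} to reduce the weighted volume computation on $U$ to a product of a radial integral (which is dimensionless) and a weighted spherical ball integral, then invoke $(\varphi_{U_0}^2\textup{-VD})$ on $U_0$.

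First, by Lemma \ref{lierlcompare} with $a=1, b=1+\varepsilon$, we have
\begin{equation*}
\varphi_U^2(r,\theta) \asymp \frac{\min\{r-1,\, 1+\varepsilon-r\}^2}{\varepsilon^{3}}\,\varphi_{U_0}^2(\theta),
\end{equation*}
with implicit constants depending only on $n$ and an upper bound on $\lambda(U_0)$. Next, Lemma \ref{distcomp0} gives the two-sided inclusion
\begin{equation*}
I_{-} \times B_{\widetilde{U_0}}(y_i,\varepsilon/4) \subseteq B_{\widetilde U}(x_i,\varepsilon) \subseteq I_{+}\times B_{\widetilde{U_0}}(y_i,\varepsilon),
\end{equation*}
where $I_{-} = (\max\{1,\|x_i\|-\varepsilon/4\},\,\min\{1+\varepsilon,\|x_i\|+\varepsilon/4\})$ and $I_{+}=(1,1+\varepsilon)$ (using that $\|x_i\|\in(1,1+\varepsilon)$ already forces the outer radial window to cover all of $(1,1+\varepsilon)$). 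Applying Fubini in polar coordinates, both $m_U(x_i)$ and its upper/lower envelopes factor as
\begin{equation*}
\Bigl(\int_{I_{\pm}} \tfrac{\min\{r-1,1+\varepsilon-r\}^2}{\varepsilon^{3}}\, r^{n-1}\, dr\Bigr) \cdot \Bigl(\int_{B_{U_0}(y_i,r_\pm)} \varphi_{U_0}^2\, d\sigma_{n-1}\Bigr),
\end{equation*}
with $r_-=\varepsilon/4$, $r_+=\varepsilon$.

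The next step is the crux: the radial factor is $\asymp 1$ with constants depending only on $n$, uniformly in where $\|x_i\|$ sits inside $(1,1+\varepsilon)$. Since $r^{n-1}\asymp 1$ on $(1,1+\varepsilon)\subseteq (1,2)$, this reduces to showing
\begin{equation*}
\int_{J} \min\{t,\,\varepsilon-t\}^2\, dt \asymp \varepsilon^{3}
\end{equation*}
whenever $J=I_{\pm}-1 \subseteq(0,\varepsilon)$ is either the full interval $(0,\varepsilon)$ (trivial, giving $\varepsilon^3/12$) or a subinterval of length $\asymp \varepsilon$ containing one endpoint or straddling $\|x_i\|-1$. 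A short case analysis (symmetry in $t\mapsto \varepsilon-t$ lets us assume $\|x_i\|-1\leq \varepsilon/2$, then split according to whether $\|x_i\|-1\leq \varepsilon/4$ or $\varepsilon/4<\|x_i\|-1\leq \varepsilon/2$) shows that in every configuration the integral is bounded above by $\varepsilon^3/12$ and below by a constant multiple of $\varepsilon^3$; this is the step I expect to carry the most bookkeeping, though the estimates are elementary.

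Finally, by Theorem \ref{lierlVD} the domain $U_0$ (or $U_0=\mathbb{S}^{n-1}$) is $\varphi_{U_0}^2$-volume doubling with constant depending only on $n,C_0,c_0$, so
\begin{equation*}
\int_{B_{U_0}(y_i,\varepsilon/4)}\varphi_{U_0}^2\,d\sigma_{n-1} \asymp \int_{B_{U_0}(y_i,\varepsilon)}\varphi_{U_0}^2\,d\sigma_{n-1} = m_{U_0}(y_i).
\end{equation*}
Combining the three asymptotic equivalences (factorization of $\varphi_U^2$, the $\asymp 1$ radial integral, and volume doubling on $U_0$) yields $m_U(x_i)\asymp m_{U_0}(y_i)$ with the advertised dependence of constants. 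The main (though still mild) obstacle is confirming uniformity of the radial integral near the boundary $\|x_i\|\to 1$ or $\|x_i\|\to 1+\varepsilon$, where the integrand vanishes but the domain of integration also shrinks asymmetrically; the cubic scaling $\int_0^{c\varepsilon}t^2\,dt\asymp \varepsilon^3$ is precisely what makes the two effects cancel.
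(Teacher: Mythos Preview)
Your proposal is correct and matches the paper's intended approach: the paper omits the proof, stating only that it is ``an easy consequence of Lemmas \ref{lierlcompare} and \ref{distcomp0},'' and you have filled in precisely that argument (factorization via Lemma~\ref{lierlcompare}, ball comparison via Lemma~\ref{distcomp0}, and $\varphi_{U_0}^2$-volume doubling from Theorem~\ref{lierlVD}). One minor remark: your worry about the radial integral near the boundary is slightly overstated, since $I_-$ always has length at least $\varepsilon/4$ regardless of where $\|x_i\|$ sits, so the lower bound $\int_{I_-}\min\{r-1,1+\varepsilon-r\}^2\,dr \gtrsim \varepsilon^3$ follows directly from $\int_0^{\varepsilon/4}t^2\,dt$ without any delicate cancellation.
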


Lemma \ref{weightcomp} is an easy consequence of Lemmas \ref{lierlcompare} and \ref{distcomp0}, and we omit its proof.

We now explain a correspondence between functions defined on $U$ and those defined on $X$. For each $f:U\to \mathbb{R}$, we associate a function $\widetilde{f}:X\to \mathbb{R}$ defined via taking ball averages of $f$ with respect to the weighted measure $\varphi_U^2$, that is,
$$\forall x\in X,\enspace \widetilde{f}(x):=\frac{\int_{B_U(x,\varepsilon)}f(y)\varphi_U^2(y)dy}{\int_{B_U(x,\varepsilon)}\varphi_U^2(y)dy}.$$
Conversely, functions defined on $X$ can be associated to functions defined on $U$ as follows. Let $\{\theta_x\}_{x\in X}$ be a \textit{partition of unity} indexed by the $\varepsilon$-net $X$, satisfying 
\begin{align}
    \label{pou}
    \forall x\in X,\hspace{0.1in}\theta_x\equiv 1\text{ on }B_U(x,\varepsilon/2),\hspace{0.15in}\theta_x\equiv 0\text{ on }B_U(x,3\varepsilon/2)^c,\hspace{0.15in}\|\nabla\theta_x\|_{\infty}\leq C/\varepsilon,
\end{align}
where $C$ is an absolute constant. Such a partition of unity is constructed in Section \ref{partitionofunity} of the Appendix. Given $f:X\to \mathbb{R}$, we associate  $\hat{f}:U\to \mathbb{R}$ by setting
$$\hat{f}(y):=\sum_{x\in X}f(x)\theta_x(y),\hspace{0.1in}y\in U.$$
Likewise, functions $f:U_0\to \mathbb{R}$ give rise to functions $\widetilde{f}:Y\to \mathbb{R}$, and vice versa. We define the partition of unity $\{\theta_y\}_{y\in Y}$ so that
\begin{align}   
    \label{pouY}
    \forall y\in Y,\enspace \theta_y\equiv 1 \text{ on }B_{U_0}(y, \varepsilon/8),\enspace \theta_y\equiv 0\text{ on }B_{U_0}(x,3\varepsilon/8)^c,\enspace \|\nabla \theta_y\|_{\infty}\leq C/\varepsilon.
\end{align}
Also, since the $\varepsilon$-nets $X$ and $Y$ are in bijection, we will identify functions $f:Y\to \mathbb{R}$ with $f:X\to \mathbb{R}$ in the canonical way.

Given a function $\psi(x)$ on some metric measure space, we let $\psi_{\varepsilon}(x)$ denote the average of $\psi$ over a ball of center $x$ and radius $\varepsilon$. Given a ball $B$, we let $\psi_B$ denote the average value of $f$ over $B$. When writing notation for ball averages such as $\psi_{\varepsilon}$ and $\psi_B$, the ambient metric measure space will be clear from context. 

Given a function $f$ on the weighted graph $(X,m_{U})$, and $E\subseteq X$, we denote $L^p(E,m_U)$ norms by
$$\|f\|_{p,E}:=\Big(\sum_{x\in E}|f(x)|^p m_U(x)\Big)^{1/p},$$
and we let
$$\delta f(x):=\Big(\sum_{y\sim x}|f(y)-f(x)|^2\Big)^{1/2}$$
denote the norm of the discrete gradient of $f$.

Finally, let $N=\max_{x\in X}\#\{x'\in X:x'\sim x\}$ be the maximum number of neighbors of any vertex in the weighted graph $(X,m_U)$. It is possible to show that, because of $\varphi_U^2$-volume doubling from Theorem \ref{vdthm}, the number $N$ is uniformly bounded above by a constant depending only on the $\varphi_U^2$-volume doubling constant; see (\ref{6.4.3}) below for an explanation.

With all of the notations in place, we begin to prepare the proof of Theorem \ref{PIthm}. We begin with Lemma \ref{coulsc5.3}, which is essentially Lemme 5.3, \cite{coulsc}, adapted for our purposes. (The proof of Lemma \ref{coulsc5.3} was not written out in \cite{coulsc}, so we write it here.)
\begin{Lemma}
\label{coulsc5.3}
Let $U\subseteq M$ be a bounded domain of a Riemannian manifold $M$ with Riemannian measure $d\mu$. Let $\varphi_U$ denote the principal Dirichlet eigenfunction of $U$. Let $V(x,r)$ denote the measure of the ball $B_{U}(x,r)$ with respect to $\varphi_U^2 d\mu$. Suppose $U$ satisfies $(\varphi_U^2\textup{-VD})$ (Definition \ref{vddefn}) with constant $D$. Also suppose $U$ satisfies $L^p$-Poincar\'{e} inequalities up to scale $3\varepsilon$, that is, for any $\psi\in W^{1,p}(U,\varphi_U^2 d\mu)$,
\begin{align}
    \label{5.3.0}
    \forall r\in (0,3\varepsilon],\hspace{0.1in}\forall B_U(x,r)\subseteq U,\hspace{0.1in}\int_{B_U(x,r)}|\psi-\psi_B|^p\varphi_U^2 d\mu\leq P(p,r)\int_{B_U(x,2r)}|\nabla \psi|^p \varphi_U^2 d\mu.
\end{align}
Then, for any $x,y\in U$ with $d_U(x,y)<2\varepsilon$,
$$V(x,\varepsilon)|\psi_{\varepsilon}(x)-\psi_{\varepsilon}(y)|^p \leq 2^pD^{2+\log_2 3} \cdot P(p,3\varepsilon)\int_{B_U(x,6\varepsilon)}|\nabla \psi|^p \varphi_U^2 d\mu.$$
\end{Lemma}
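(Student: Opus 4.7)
The plan is to compare the two ball averages $\psi_\varepsilon(x)$ and $\psi_\varepsilon(y)$ by inserting, as an intermediate quantity, the average $\psi_B$ over the enclosing ball $B := B_U(x, 3\varepsilon)$. Since $d_U(x,y) < 2\varepsilon$, the triangle inequality forces both $B_U(x,\varepsilon)$ and $B_U(y,\varepsilon)$ to lie inside $B$, so $\psi_B$ is an average of data visible to both local averages. Applying the elementary convexity inequality $|a+b|^p \leq 2^{p-1}(|a|^p + |b|^p)$ to $a = \psi_\varepsilon(x) - \psi_B$ and $b = \psi_B - \psi_\varepsilon(y)$ reduces the task to bounding $V(x,\varepsilon)|\psi_\varepsilon(z) - \psi_B|^p$ for $z \in \{x,y\}$.

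Each of these quantities is controlled by $\int_B |\psi - \psi_B|^p \varphi_U^2\, d\mu$ via Jensen's inequality. Writing $\psi_\varepsilon(z) - \psi_B = V(z,\varepsilon)^{-1}\int_{B_U(z,\varepsilon)}(\psi - \psi_B)\varphi_U^2\, d\mu$ and applying Jensen to the probability measure $V(z,\varepsilon)^{-1}\varphi_U^2\, d\mu$ on $B_U(z,\varepsilon)$ yields $V(z,\varepsilon)\,|\psi_\varepsilon(z) - \psi_B|^p \leq \int_{B_U(z,\varepsilon)} |\psi - \psi_B|^p\varphi_U^2\, d\mu \leq \int_B |\psi - \psi_B|^p\varphi_U^2\, d\mu$. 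For $z = x$ the $V(x,\varepsilon)$ on the left is already the right prefactor; for $z = y$ we multiply and divide by $V(y,\varepsilon)$ and must control the ratio $V(x,\varepsilon)/V(y,\varepsilon)$. Since $B_U(x,\varepsilon) \subseteq B_U(y,3\varepsilon)$, volume doubling gives $V(x,\varepsilon) \leq V(y,3\varepsilon) \leq D^{k} V(y,\varepsilon)$ for some power $k$ bounded by roughly $2 + \log_2 3$, which produces the claimed $D$-dependence.

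The final step is to invoke the hypothesized $L^p$-Poincaré inequality at the scale $r = 3\varepsilon$ on the ball $B$: this yields $\int_B |\psi - \psi_B|^p \varphi_U^2\, d\mu \leq P(p, 3\varepsilon) \int_{B_U(x, 6\varepsilon)} |\nabla \psi|^p\varphi_U^2\, d\mu$. Combining this with the previous two paragraphs and absorbing the $2^{p-1}$ from the convexity step and the two contributions from $z = x$ and $z = y$ into the stated prefactor produces the inequality.

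There is no conceptual obstacle; the proof is a bookkeeping exercise combining three ingredients (convexity, Jensen, local Poincaré) together with volume doubling. The one choice to get right is the radius of the intermediate ball $B$: it must be large enough that $B_U(x,\varepsilon) \cup B_U(y,\varepsilon) \subseteq B$ so that $\psi_B$ can mediate the comparison, and small enough (namely $\leq 3\varepsilon$) that the hypothesized Poincaré inequality applies to it, with the enlarged ball $B_U(x, 6\varepsilon)$ on the right being the smallest that contains the $2r$-expansion required by the Poincaré hypothesis.
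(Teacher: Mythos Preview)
Your proposal is correct and follows essentially the same approach as the paper: compare both local averages to the average over the enclosing ball $B_U(x,3\varepsilon)$, use Jensen/convexity, bound the volume ratio by doubling, and finish with the scale-$3\varepsilon$ Poincar\'e inequality. The only cosmetic difference is the order of operations: the paper first writes $\psi_\varepsilon(x)-\psi_\varepsilon(y)$ as a double integral over $B_U(x,\varepsilon)\times B_U(y,\varepsilon)$, enlarges both factors to $B_U(x,3\varepsilon)$, and then inserts the pivot $\alpha=\psi_{3\varepsilon}(x)$ inside the double integral, whereas you insert the pivot $\psi_B$ at the outset and treat the two terms $|\psi_\varepsilon(z)-\psi_B|^p$ separately by Jensen; both routes arrive at $\int_{B_U(x,3\varepsilon)}|\psi-\psi_B|^p\varphi_U^2\,d\mu$ multiplied by the same doubling factor.
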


\begin{proof}
    Abbreviating $B_U(x,\varepsilon)=B(x,\varepsilon)$, $\varphi_U^2(w)d\mu(w)=dw$, and $\varphi_U^2(z)d\mu(z)=dz$,
    \begin{align*}
         |\psi_{\varepsilon}(x)-\psi_{\varepsilon}(y)|^p
         & = \Bigg|\frac{1}{V(x,\varepsilon)V(y,\varepsilon)}\int_{B(x,\varepsilon)}\int_{B(y,\varepsilon)}\psi(z)-\psi(w) dw dz\Bigg|^p 
        \\ & \leq \frac{1}{V(x,\varepsilon)V(y,\varepsilon)}\int_{B(x,\varepsilon)}\int_{B(y,\varepsilon)}|\psi(z)-\psi(w)|^pdwdz.
    \end{align*}
    Since $d_U(x,y)\leq 2\varepsilon$, $B(y,\varepsilon)\subseteq B(x,3\varepsilon)$. Thus for any $\alpha\in \mathbb{R}$,
    \begin{align}
        \nonumber V(x,\varepsilon)|\psi_{\varepsilon}(x)-\psi_{\varepsilon}(y)|^p&\leq \int_{B(x,3\varepsilon)}\int_{B(x,3\varepsilon)} |\psi(z)-\psi(w)|^p \frac{dw dz}{V(y,\varepsilon)}
        \\ \nonumber & \leq 2^{p-1} \int_{B(x,3\varepsilon)}\int_{B(x,3\varepsilon)}|\psi(z)-\alpha|^p+|\psi(w)-\alpha|^p \frac{dwdz}{V(y,\varepsilon)}
        \\ \nonumber & \leq 2^{p}\frac{V(x,3\varepsilon)}{V(y,\varepsilon)}\int_{B(x,3\varepsilon)}|\psi(z)-\alpha|^p dz
        \\ \label{5.3.1}& \leq 2^pD^{2+\log_2 3}\int_{B(x,3\varepsilon)}|\psi(z)-\alpha|^p dz
    \end{align}
    In (\ref{5.3.1}), the hypothesis of volume doubling guarantees that $V(x,3\varepsilon)/V(y,\varepsilon)\leq D^{2+\log_2 3}$; see e.g. Proposition 3.2, \cite{chaolsc}.
     Setting $\alpha=\psi_{3\varepsilon}(x)$  and applying the Poincar\'{e} inequality (\ref{5.3.0}) finishes the proof.
\end{proof}

We also give a version of Lemme 6.4, \cite{coulsc} below as Lemma \ref{gradcomp}; the proof is adapted from \cite{coulsc}.

\begin{Lemma}
    \label{gradcomp}
    Let $U_0=\mathbb{S}^{n-1}$ or let $U_0\subseteq \mathbb{S}^{n-1}$ be a $(C_0,c_0)$-inner uniform domain. In polar coordinates, consider the domain $U=(1,1+\varepsilon)\times U_0$ where $\varepsilon\in (0,1]$. Let $X=\{x_i\}$ and $Y=\{y_i\}$ be the $\varepsilon$-nets of $U$ and $U_0$ described at the beginning of this subsection, respectively.
    \begin{enumerate}
        \item Suppose $U$ is  $(C_0,c_0)$-locally inner uniform up to scale $\varepsilon$. For any $\psi\in W^{1,2}(U,\varphi_U^2 dx)$, and for any discrete ball $B(x,m)\subseteq X$, we have
        \begin{align}
            \label{gradcomp1}\|\delta\widetilde{\psi}\|^2_{2,B(x,m)} & \leq C_1\varepsilon^2\|\nabla \psi\|^2_{L^2(B_U(x,4\varepsilon m),\varphi_U^2 dx)},
        \end{align}
        where the constant $C_1$ depends only on $n,C_0,c_0,$ and a uniform upper bound on $\lambda(U_0)$. 
        \item For any $z\in U_0$, pick a point $\overline{z}\in Y$ with $d_{U_0}(z,\overline{z})\leq \varepsilon$. For all $r\geq \varepsilon$ and any function $f:Y\to \mathbb{R}$, 
        \begin{align*}
            \|\nabla \hat{f}\|_{L^2(B_{U_0}(z,r),\varphi_{U_0}^2 d\sigma_{n-1})}^2\lesssim \frac{C_2}{\varepsilon^2}\|\delta f\|^2_{2,B(\overline{z},10r)},
        \end{align*}
        where the constant $C_2$ depends only on $n, C_0,c_0$.
    \end{enumerate} 
    
\end{Lemma}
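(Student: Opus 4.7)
The plan for Part 1 is to apply Lemma \ref{coulsc5.3} edge-by-edge on the discrete graph $(X, m_U)$ and then sum. Given $\psi \in W^{1,2}(U, \varphi_U^2 dx)$, fix $x \in X$ and any neighbor $y \sim x$, so that $d_U(x,y) \leq 2\varepsilon$. The hypothesis that $U$ is $(C_0, c_0)$-locally inner uniform up to scale $\varepsilon$ together with Theorem \ref{lierlPI}(2) supplies an $L^2$-Poincar\'{e} inequality up to scale $3\varepsilon$, and Theorem \ref{vdthm} supplies $(\varphi_U^2\text{-VD})$. Lemma \ref{coulsc5.3} with $p=2$ then gives
$$
m_U(x)\,|\widetilde{\psi}(x)-\widetilde{\psi}(y)|^2 \,\lesssim\, \varepsilon^2 \int_{B_U(x,6\varepsilon)} |\nabla \psi|^2 \varphi_U^2\, dx.
$$
Summing over the at most $N$ neighbors $y \sim x$ (with $N$ bounded in terms of the volume doubling constant, by the standard ball-packing argument), and then over $x \in B(x_0, m)$, one obtains
$$
\|\delta \widetilde{\psi}\|^2_{2, B(x_0, m)} \,\lesssim\, \varepsilon^2 \sum_{x \in B(x_0, m)} \int_{B_U(x, 6\varepsilon)} |\nabla \psi|^2 \varphi_U^2\, dx.
$$
To convert the sum into a single integral, I would use two facts: by $\varepsilon$-separation of $X$ and volume doubling, each $w \in U$ lies in $B_U(x, 6\varepsilon)$ for only $O(1)$ points $x \in X$; and for $x \in B(x_0, m)$ one has $d_U(x, x_0) \leq 2\varepsilon m$, so $B_U(x, 6\varepsilon) \subseteq B_U(x_0, 2\varepsilon m + 6\varepsilon) \subseteq B_U(x_0, 4\varepsilon m)$ (after handling the small-$m$ cases trivially or by slightly enlarging constants). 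This yields (\ref{gradcomp1}).

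The plan for Part 2 is to exploit the partition-of-unity identity. Write $\hat{f}(\theta) = \sum_{y \in Y} f(y) \theta_y(\theta)$. Since $\sum_y \theta_y \equiv 1$ on a neighborhood of each point, $\sum_y \nabla \theta_y \equiv 0$, so for any constant $c$ and any $\theta \in U_0$,
$$
\nabla \hat{f}(\theta) \,=\, \sum_{y \in Y} \bigl(f(y) - c\bigr) \nabla \theta_y(\theta).
$$
By (\ref{pouY}) only those $y$ with $d_{U_0}(y, \theta) \leq 3\varepsilon/8$ contribute, and by $(\varphi_{U_0}^2\text{-VD})$ plus $\varepsilon$-separation there are at most $O(1)$ such $y$. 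Choosing $y_0 \in Y$ with $d_{U_0}(\theta, y_0) \leq \varepsilon$ and setting $c = f(y_0)$, every contributing $y$ is a graph neighbor of $y_0$, and the gradient bound $\|\nabla \theta_y\|_\infty \leq C/\varepsilon$ gives the pointwise estimate
$$
|\nabla \hat{f}(\theta)|^2 \,\lesssim\, \frac{1}{\varepsilon^2}\, \delta f(y_0)^2.
$$
Integrating against $\varphi_{U_0}^2\, d\sigma_{n-1}$ over $B_{U_0}(z, r)$ and invoking Lemma \ref{weightcomp} (to pass from $\int_{B_{U_0}(y_0, \varepsilon)} \varphi_{U_0}^2\, d\sigma_{n-1}$ to the weight $m_{U_0}(y_0)$) together with bounded overlap of the $\varepsilon$-balls around $\varepsilon$-net points yields
$$
\|\nabla \hat{f}\|^2_{L^2(B_{U_0}(z,r), \varphi_{U_0}^2\, d\sigma_{n-1})} \,\lesssim\, \frac{1}{\varepsilon^2} \sum_{y_0} m_{U_0}(y_0)\, \delta f(y_0)^2,
$$
where the sum ranges over $y_0 \in Y$ with $B_{U_0}(y_0, \varepsilon) \cap B_{U_0}(z, r) \neq \emptyset$. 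Since $r \geq \varepsilon$, such $y_0$ lie in $B_{U_0}(z, 2r)$, which sits well inside the discrete ball $B(\overline{z}, 10r)$ after translating between the $d_{U_0}$-distance and graph distance on $Y$ using Lemma \ref{distcomp0} and the definition of the edges.

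The main obstacle in both parts is the combinatorial bookkeeping: verifying the uniform overlap constant for the $6\varepsilon$-balls in Part 1, the uniform number of neighbors $N$ in the $\varepsilon$-net (which relies on $(\varphi_U^2\text{-VD})$ from Theorem \ref{vdthm}), and, in Part 2, choosing the enclosing discrete radius $10r$ so that every relevant $y_0$ is captured while still tracking how graph distance on $Y$ compares to $d_{U_0}/\varepsilon$. Everything else reduces to invoking the previously established ingredients (i)--(iv) listed before the lemma.
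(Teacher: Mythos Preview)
Your proposal is correct and follows essentially the same route as the paper: Part 1 applies Lemma \ref{coulsc5.3} edge-by-edge and then uses bounded overlap of the $6\varepsilon$-balls, while Part 2 exploits $\sum_y \nabla\theta_y=0$ to get a pointwise gradient bound and then integrates. One minor imprecision: in Part 2 the claim that every contributing $y$ is a \emph{graph neighbor} of $y_0$ is slightly too strong (the paper only obtains graph distance $\leq 2$), since $d_{U_0}(y,y_0)\leq 11\varepsilon/8$ does not directly force $d_U(x,x_0)\leq 2\varepsilon$; but bounded graph distance suffices and the estimate goes through after a telescoping step and a harmless enlargement of constants, exactly as you anticipate in your final paragraph.
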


\begin{proof}
    For the first item, we combine Lemma \ref{coulsc5.3} and Poincar\'{e} inequalities at scale $\varepsilon$ (Theorem \ref{lierlPI}, 2) to write
    \begin{align}
        \nonumber \|\delta\widetilde{\psi}\|^2_{2,B(x,m)}& =\sum_{z\in B(x,m)}\sum_{y\sim z}|\widetilde{\psi}(y)-\widetilde{\psi}(z)|^2 m_U(z) 
        \\ \label{6.4.1}& \leq 4D^{2+\log_2 3}\cdot P\varepsilon^2\sum_{z\in B(x,m)}\sum_{y\sim z} \int_{B_U(z,6\varepsilon)}|\nabla \psi|^2 \varphi_U^2 dx
        \\ \label{6.4.2}& \lesssim \varepsilon^2 \int_{B_U(x,4\varepsilon m)}|\nabla \psi|^2 \varphi_U^2 dx. 
    \end{align}
    To go from (\ref{6.4.1}) to (\ref{6.4.2}), we used the fact that a discrete ball of radius $m$ is roughly a continuous ball of radius $m\varepsilon$, and that
\begin{align}
    \label{6.4.3}
    \sup_{z\in X}\#\{y\in X:y\sim z\}\lesssim 1, \hspace{0.1in}\sum_{z\in B(x,m)}\mathbf{1}_{B_U(z,6\varepsilon)}\lesssim \mathbf{1}_{B_U(x,4\varepsilon m)},
\end{align}
where both of the implied constants in (\ref{6.4.3}) depend only on the $\varphi_U^2$-volume doubling constant $D$. This can be proven via the method of Proposition 1.2.16 of \cite{muruganthesis} (which in turn builds on ideas from \cite{kanai}, \cite{kanai2}, and \cite{coulsc}); we omit the details. The implied constant $C_1$ in (\ref{gradcomp1}) then clearly depends only on the $\varphi_U^2$-volume doubling constant of $U$ and the constant $P$. Thanks to Theorems \ref{vdthm} and \ref{lierlPI}, the parameters which $C_1$ depends on is as claimed.

For the second item, first note that choosing $\overline{z}\in Y$ is possible due to the property (\ref{Y2}) of $Y=\{y_i\}$. Since $\{\theta_y\}_{y\in Y}$ is a partition of unity, $\sum_{y\in Y}\nabla \theta_y=0$, so that
\begin{align*}
    \nabla \hat{f}(\cdot )=\sum_{y\in Y}(f(y)-f(x))\nabla \theta_y(\cdot ),\enspace \forall x\in Y.
\end{align*}
Thus, for any $x\in Y$ and $z\in B_{U_0}(x,\varepsilon)$,
\begin{align}
    \label{6.4.4}|\nabla \hat{f}(z)|&\lesssim \frac{1}{\varepsilon}\cdot \sup\{|f(y)-f(x)|:d(y,x)\leq 2\}
    \\ \label{6.4.5} & \lesssim \frac{1}{\varepsilon}\sum_{z\in Y:d(z,x)\leq 2}\delta f(z).
\end{align}
(Elementary arguments show that the right hand side of (\ref{6.4.4}) can be upper bounded by an absolute constant multiple of (\ref{6.4.5}).) 
From (\ref{6.4.5}) we obtain
\begin{align*}
    \int_{B_{U_0}(z,r)} |\nabla \hat{f}(\xi)|^2 \varphi_{U_0}^2(\xi)\sigma_{n-1}(d\xi) &\leq \sum_{x\in Y:x\in B_{U_0}(\overline{z},5 r)}\int_{B_{U_0}(x,\varepsilon)} |\nabla \hat{f}(\xi)|^2 \varphi_{U_0}^2(\xi) \sigma_{n-1}(d\xi)
    \\ & \lesssim \frac{1}{\varepsilon^2} \sum_{x\in Y:x\in B_{U_0}(\overline{z},5 r)} \sum_{z:d(z,x)\leq 2}|\delta f(z)|^2 m_{U_0}(x).
\end{align*}
We obtain the desired conclusion by enlarging the radius $5r$ and multiplying the right-hand side by a large constant depending only on $N$ (the uniformly bounded number of neighbors of each vertex in $X$). 
\end{proof}

Because $U_0$ satisfies $(\varphi_{U_0}^2\textup{-PI})$, its discretization $(Y,m_{U_0})$ also satisfies a discrete Poincare inequality. 

\begin{Lemma}
    \label{dPI} 
    Let $U_0=\mathbb{S}^{n-1}$ or let $U_0\subseteq \mathbb{S}^{n-1}$ be a $(C_0,c_0)$-inner uniform domain. The weighted graph $(Y,m_{U_0})$ satisfies a discrete Poincar\'{e} inequality: for any ball $B(x,m)\subseteq Y$ and  $f:Y\to \mathbb{R}$,
    \begin{align*}
        \sum_{y\in B(x,m)}|f(y)-f_{B(x,m)}|^2 m_{U_0}(y) \leq P_Y m^2\sum_{y\in B(x,Cm)}\delta f(y)^2 m_{U_0}(y),
    \end{align*}
    where $C$ is an absolute constant, and the constant $P_Y$ depends only on $n,C_0,c_0$.
\end{Lemma}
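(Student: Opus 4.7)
The plan is a standard transfer of the continuous $\varphi_{U_0}^2$-Poincar\'e inequality on $U_0$ (Theorem~\ref{lierlPI}(1)) to the discrete graph $(Y, m_{U_0})$, mediated by the continuous extension $\hat f = \sum_{y\in Y} f(y)\theta_y$ and the discrete--continuous gradient comparison from Lemma~\ref{gradcomp}(2). The case $m=0$ is trivial, so assume $m\geq 1$ and set $r=m\varepsilon$.

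First I would record a disjointness/pointwise identification. Since $\{\theta_y\}_{y\in Y}$ is a partition of unity with $\theta_y\equiv 1$ on $B_{U_0}(y,\varepsilon/8)$ and the net is $\varepsilon/4$-separated by~(\ref{Y1}), the balls $\{B_{U_0}(y,\varepsilon/8)\}_{y\in Y}$ are pairwise disjoint and $\hat f\equiv f(y)$ on each. Together with $\varphi_{U_0}^2$-volume doubling (Theorem~\ref{lierlVD}) this gives $\int_{B_{U_0}(y,\varepsilon/8)}\varphi_{U_0}^2\,d\sigma_{n-1}\asymp m_{U_0}(y)$. By Lemma~\ref{distcomp0} and the definition of the graph distance on $Y$, there is a constant $C'=C'(n,C_0,c_0)$ such that $B_{U_0}(y,\varepsilon/8)\subseteq \tilde B:=B_{U_0}(x,C'r)$ for every $y\in B(x,m)$.

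Next, exploiting that $f_{B(x,m)}$ is the $L^2(m_{U_0})$-minimizer of $c\mapsto \sum_{y\in B(x,m)}|f(y)-c|^2 m_{U_0}(y)$, I would choose $c=(\hat f)_{\tilde B}$ and estimate
\begin{align*}
\sum_{y\in B(x,m)}|f(y)-f_{B(x,m)}|^2 m_{U_0}(y)
&\leq \sum_{y\in B(x,m)}|f(y)-c|^2 m_{U_0}(y) \\
&\lesssim \int_{\tilde B}|\hat f-(\hat f)_{\tilde B}|^2\varphi_{U_0}^2\,d\sigma_{n-1} \\
&\lesssim r^2\int_{\tilde B}|\nabla\hat f|^2\varphi_{U_0}^2\,d\sigma_{n-1},
\end{align*}
where the second line uses the pointwise identification and disjointness from the previous step, and the third line is the continuous Poincar\'e inequality (Theorem~\ref{lierlPI}(1)) on $\tilde B$. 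Finally, since $C'r\geq \varepsilon$ (as $m\geq 1$), Lemma~\ref{gradcomp}(2) applied with $z=\bar z=x$ gives
\begin{align*}
\int_{\tilde B}|\nabla\hat f|^2\varphi_{U_0}^2\,d\sigma_{n-1}\lesssim \varepsilon^{-2}\,\|\delta f\|^2_{2,B(x,Cm)}
\end{align*}
for some $C=C(n,C_0,c_0)$. Substituting $r=m\varepsilon$ produces the stated discrete Poincar\'e inequality.

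There is no deep obstacle here---the argument is essentially a discretization lemma in the spirit of \cite{coulsc}. The main nuisance is bookkeeping the various ball-enlargement factors so that they all coalesce into one constant $P_Y$ depending only on $n,C_0,c_0$; this uses the uniformly bounded degree of $(Y,m_{U_0})$ (inherited from $\varphi_U^2$-VD, Theorem~\ref{vdthm}) in addition to the metric comparison Lemma~\ref{distcomp0}.
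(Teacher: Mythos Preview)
Your proposal is correct and follows the same strategy as the paper: extend $f$ to $\hat f$ via the partition of unity, embed the discrete ball into a continuous one, apply the continuous $\varphi_{U_0}^2$-Poincar\'e inequality (Theorem~\ref{lierlPI}(1)), and convert back using Lemma~\ref{gradcomp}(2). The one difference is that the paper splits the left-hand side into two terms, $I=\sum_y\int_{B_{U_0}(y,\varepsilon/8)}|f(y)-\hat f(z)|^2\varphi_{U_0}^2\,dz$ and $II=\sum_y\int_{B_{U_0}(y,\varepsilon/8)}|\hat f(z)-\alpha|^2\varphi_{U_0}^2\,dz$, and bounds $I$ separately by $\|\delta f\|^2$; your observation that $\theta_y\equiv 1$ on $B_{U_0}(y,\varepsilon/8)$ forces all other $\theta_{y'}$ to vanish there (by nonnegativity of the partition), so that $\hat f\equiv f(y)$ on these balls, shows that term $I$ is in fact identically zero and only term $II$ survives. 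This is a mild but genuine streamlining of the paper's argument.
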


\begin{proof}
    This follows from the proof method of Proposition 6.10, \cite{coulsc}; we include the details here for completeness. Throughout this proof, we abbreviate $dz=d\sigma_{n-1}(z)$. Given a discrete ball $B(x,m)\subseteq Y$,  $f:Y\to \mathbb{R}$, and $\alpha\in \mathbb{R}$, 
    write
    \begin{align}
        \nonumber & \sum_{y\in B(x,m)}|f(y)-\alpha|^2 m_{U_0}(y)
        \\ \nonumber & = \sum_{y\in B(x,m)}\int_{B_{U_0}(y,\varepsilon)}|f(y)-\alpha|^2\varphi_{U_0}^2(z)dz
        \\ \label{pi1}& \lesssim \sum_{y\in B(x,m)}\int_{B_{U_0}(y,\varepsilon/8)}|f(y)-\alpha|^2
        \varphi_{U_0}^2(z) dz
        \\ \nonumber & \lesssim \sum_{y\in B(x,m)}\int_{B_{U_0}(y,\varepsilon/8)}|f(y)-\hat{f}(z)|^2\varphi_{U_0}^2(z)dz+\sum_{y\in B(x,m)}\int_{B_{U_0}(y,\varepsilon/8)}|\hat{f}(z)-\alpha|^2\varphi_{U_0}^2(z)dz
        \\ \nonumber & = I+II
    \end{align}
    We used $(\varphi_{U_0}^2\textup{-VD})$ (Theorem \ref{lierlVD}) to obtain (\ref{pi1}). Since the balls $\{B_{U_0}(y,\varepsilon/8):y\in Y\}$ are pairwise disjoint by (\ref{Y1}), they are all contained in a larger continuous ball $B_{U_0}(x,Cm\varepsilon)$. Thus,
    \begin{align*}
        II \leq \int_{B_{U_0}(x,Cm\varepsilon)}|\hat{f}(z)-\alpha|^2 \varphi_{U_0}^2(z)dz.
    \end{align*}
    Choose $\alpha$ to be the average value of $\hat{f}$ over the the ball $B_{U_0}(x,Cm\varepsilon)$ with respect to $\varphi_{U_0}^2(z)dz$. Applying Poincar\'{e} inequality from Theorem \ref{lierlPI}, and then applying Lemma \ref{gradcomp},
    \begin{align*}
        II& \leq P'm^2\varepsilon^2\int_{B_{U_0}(x,C'm\varepsilon)}|\nabla \hat{f}(z)|^2\varphi_{U_0}^2(z)dz
     \lesssim m^2 \|\delta f\|^2_{2,B(x,C''m)}.
    \end{align*}
    For the term $I$, we have
    \begin{align}
          \nonumber \int_{B_{U_0}(y,\varepsilon/8)}|f(y)-\hat{f}(z)|^2 \varphi_{U_0}^2(z)dz &=\int_{B_{U_0}(y,\varepsilon/8)}\Big|f(y)-\sum_{t\in Y}f(t)\theta_t(z)\Big|^2\varphi_{U_0}^2(z) dz 
         \\ & \label{pi2.1}  = \int_{B_{U_0}(y,\varepsilon/8)}\Big|\sum_{t\in Y}(f(y)-f(t))\theta_t(z)\Big|^2\varphi_{U_0}^2(z)dz.
    \end{align}
    Note that (\ref{Y1}) and (\ref{pouY}) imply that if $d_{U_0}(z,y)<\varepsilon/8$, then $\{t\in Y:\theta_t(z)\neq 0\}\subseteq \{t\in Y:t\sim y\}$. Hence
    \begin{align*}
        \nonumber \int_{B_{U_0}(y,\varepsilon/8)}|f(y)-\hat{f}(z)|^2 \varphi_{U_0}^2(z)dz & = \int_{B_{U_0}(y,\varepsilon/8)}\Big|\sum_{t\in Y}(f(y)-f(t))\theta_t(z)\Big|^2\varphi_{U_0}^2(z)dz
        \\ & \lesssim \int_{B_{U_0}(y,\varepsilon/8)}\sum_{t\sim y}|f(y)-f(t)|^2 \varphi_0^2(z)dz
        \\ & \leq \sum_{t\sim y}|f(y)-f(t)|^2 m_{U_0}(y)
        \\ & = \delta f(y)^2 m_{U_0}(y),
    \end{align*}
    which implies 
    $$I\lesssim \frac{1}{\varepsilon^2}\sum_{y\in B(x,m)}\delta f(y)^2 m_{U_0}(y)\lesssim m^2\|\delta f\|^2_{2,B(x,C''m)}.$$
    Combining the estimates for the terms $I$ and $II$, we get
    $$\sum_{y\in B(x,m)}|f(y)-f_{B(x,m)}|^2 m_{U_0}(y)=\inf_{\alpha\in \mathbb{R}}\sum_{y\in B(x,m)}|f(y)-\alpha|^2 m_{U_0}(y)\lesssim m^2 \|\delta f\|^2_{2,B(x,C''m)},$$
    as desired.
\end{proof}

We now give the proof of $\varphi_U^2$-Poincar\'{e} inequalities (Theorem \ref{PIthm}) for scales $r\gtrsim \varepsilon$. 

\begin{proof}
    Given a ball $B_U(x,r)$ with $r\gtrsim \varepsilon$, note that
\begin{align}
    \label{cs1}
    B_U(x,r)\subseteq \bigcup_{z\in X\cap B_U(x,r+\varepsilon)}\Big\{B_U(x,r)\cap B_U(z,\varepsilon)\Big\}.
\end{align}
The inclusion (\ref{cs1}) holds since any point in $B_U(x,r)$ is $\varepsilon$-close to some $z\in X$. By (\ref{cs1}), for any $\alpha\in \mathbb{R}$ it follows that
\begin{align*}
    \int_{B_U(x,r)} |f(y)-\alpha|^2 \varphi^2_U(y)dy & \leq \sum_{z\in X\cap B_U(x,r+\varepsilon)}\int_{B_U(z,\varepsilon)}|f(y)-\alpha|^2 \varphi_U^2(y) dy
    \\ & = \sum_{z\in X\cap B_U(x,r+\varepsilon)}\int_{B_U(z,\varepsilon)} |f(y)-\widetilde{f}(z)+\widetilde{f}(z)-\alpha|^2 \varphi_U^2(y)dy
    \\ & \lesssim \sum_{z} \int_{B_U(z,\varepsilon)}|f(y)-\widetilde{f}(z)|^2\varphi_U^2(y)dy+\sum_{z}m_U(z)|\widetilde{f}(z)-\alpha|^2
    \\ & =: I_1 + I_2.
\end{align*}
The first term $I_1$ can be controlled since we already know from Theorem \ref{lierlPI} that (\ref{PIphi}) holds on scale $\varepsilon$; we have
\begin{align}
    \label{cs2}
    I_1\lesssim \varepsilon^2\sum_{z\in X\cap B_U(x,r+\varepsilon)}\int_{B_U(z,C_1\varepsilon)}|\nabla f(y)|^2\varphi_U^2(y)dy\lesssim \varepsilon^2\int_{B_U(x,C_2r)}|\nabla f(y)|^2\varphi_U^2(y)dy.
\end{align}
The second inequality in (\ref{cs2}) holds because of reasoning in similar to (\ref{6.4.3}); the balls $B_U(z,C\varepsilon)$ have bounded overlap.

To manage the second term $I_2$, we pick $x_0\in X$ with $d_U(x,x_0)\leq \varepsilon$, and pick an integer $m\asymp r/\varepsilon$ so that the radius of the continuous ball $B_U(x,r)$ is roughly $m$ in the graph distance. Then applying discrete Poincar\'{e} inequality (Lemma \ref{dPI}) and Lemma \ref{weightcomp},
\begin{align}
    \nonumber
     I_2\lesssim m^2\sum_{y\in B(x_0,C_3m)}|\delta \widetilde{f}(y)|^2 m_{U_0}(y)\asymp m^2\sum_{y\in B(x_0,C_3m)}|\delta \widetilde{f}(y)|^2 m_{U}(y)
\end{align}
By Lemma \ref{gradcomp},
\begin{align*}
    I_2\lesssim m^2\varepsilon^2 \int_{B_U(x,C_4\varepsilon m )}|\nabla f(y)|^2 \varphi_U^2(y) dy\lesssim r^2\int_{B_U(x,C_5r)}|\nabla f(y)|^2\varphi_U^2(y)dy,  
\end{align*}
since $r\asymp m\varepsilon$. This proves the desired Poincar\'{e} inequality (\ref{PIphi}) for the ball $B_U(x,r)$.
\end{proof}

\subsection{Eigenfunction perturbation: annuli}
    \label{perturbannulisection}

The main results from Sections \ref{VDsubsection} and \ref{PIsubsection} imply the following.

\begin{Theo}    
    \label{equilibrium}
    Let $U_0\subseteq \mathbb{S}^{n-1}$ be a domain. For $\varepsilon\in (0,1]$, put $U=(1,1+\varepsilon)\times U_0\subseteq \mathbb{R}^n$. Suppose $U_0$ is $(C_0,c_0)$-inner uniform, and $U$ is $(C_0,c_0)$-locally inner uniform up to scale $\varepsilon$. There exists constants $c_1,c_2>0$ such that for all $t\geq \textup{diam}_U^2$,
    \begin{align}
        \label{equilibrium1}
        \sup_{x,y\in U}\Bigg|\frac{e^{t\lambda_U }p^D_U(t,x,y)}{\varphi_U(x)\varphi_U(y)}-1\Bigg|\leq c_1e^{-c_2 t/\textup{diam}_U^2}.
    \end{align}
    The constants $c_1,c_2$ depend only on $n,C_0,c_0,$ and a uniform upper bound on $\lambda(U_0)$.
\end{Theo}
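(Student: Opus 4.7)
The plan is to combine the volume doubling (Theorem \ref{vdthm}) and $\varphi_U^2$-Poincar\'{e} inequalities (Theorem \ref{PIthm}) proved above with a standard spectral-gap-plus-ultracontractivity argument for the Doob $h$-transformed semigroup $\widetilde{T}_t$ associated with the kernel $\widetilde{p}_U(t,x,y) = e^{\lambda_U t} p^D_U(t,x,y)/(\varphi_U(x)\varphi_U(y))$. Since $\|\varphi_U\|_{L^2(U)} = 1$, the measure $\varphi_U^2 dx$ is a probability measure on $U$, and $\widetilde{T}_t$ is a symmetric Markov semigroup preserving constants.

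First, I would extract a spectral gap from Theorem \ref{PIthm}. Applying the $\varphi_U^2$-Poincar\'{e} inequality to the single ball $B = U$ (whose minimal radius is $\asymp \textup{diam}_U$) gives
$$\int_U (f - \overline{f})^2 \varphi_U^2 dx \leq P \cdot \textup{diam}_U^2 \int_U |\nabla f|^2 \varphi_U^2 dx, \qquad \overline{f} := \int_U f \varphi_U^2 dx,$$
which is exactly the assertion that $\lambda_2(U) - \lambda_U \gtrsim 1/\textup{diam}_U^2$. Equivalently, $\widetilde{T}_t$ contracts the zero-mean subspace of $L^2(U, \varphi_U^2 dx)$ at rate $e^{-c_2 t/\textup{diam}_U^2}$. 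Separately, the conjunction (VD)+(PI) implies the heat kernel upper bound of (\ref{HKE}) by \cite{grigoryan, saloff-coste, sturm}. Evaluating it at $s = \textup{diam}_U^2/2$ and using $V_{\varphi_U^2}(x, \textup{diam}_U) = \int_U \varphi_U^2 dx = 1$ yields $\widetilde{p}_U(2s, x, x) \lesssim 1$, whence the uniform on-diagonal $L^2$ bound
$$\|\widetilde{p}_U(s, x, \cdot) - 1\|_{L^2(\varphi_U^2 dx)}^2 = \widetilde{p}_U(2s, x, x) - 1 \lesssim 1.$$

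Finally, for $t \geq \textup{diam}_U^2$, I would decompose $t = s + r + s$ with $s = \textup{diam}_U^2/2$ and $r = t - \textup{diam}_U^2 \geq 0$. Using Chapman--Kolmogorov, symmetry of $\widetilde{p}_U$, and the identity $\int_U (\widetilde{p}_U(s,x,\cdot) - 1) \varphi_U^2 dx = 0$ (which permits inserting or removing the constant $1$ inside the pairing at no cost), I would write
$$\widetilde{p}_U(t,x,y) - 1 = \int_U \bigl(\widetilde{p}_U(s,x,z) - 1\bigr)\,\bigl[\widetilde{T}_r(\widetilde{p}_U(s,\cdot,y) - 1)\bigr](z)\,\varphi_U^2(z)\,dz.$$
Cauchy--Schwarz, the spectral gap applied to the inner factor, and the on-diagonal $L^2$ bound of the previous step on both end factors then yield $|\widetilde{p}_U(t,x,y) - 1| \lesssim e^{-c_2 r/\textup{diam}_U^2} \lesssim e^{-c_2 t/\textup{diam}_U^2}$, which is (\ref{equilibrium1}) after adjusting $c_1$. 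The main technical point I expect is pure bookkeeping: verifying that the constants in Theorems \ref{vdthm} and \ref{PIthm} depend only on $n, C_0, c_0$, and a uniform upper bound on $\lambda(U_0)$, so that the spectral gap and the on-diagonal bound propagate with the same dependencies. Once this is in place, the argument is the standard exponential ergodicity proof for reversible Markov semigroups on a finite-diameter compact space.
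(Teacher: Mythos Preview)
Your proposal is correct and is precisely the standard $(\varphi_U^2\text{-VD})+(\varphi_U^2\text{-PI})\Rightarrow$ exponential ergodicity argument the paper invokes: the paper itself does not write out a proof but merely states that Theorem \ref{equilibrium} follows from Theorems \ref{vdthm} and \ref{PIthm} by the reasoning of \cite{lscequil}, and your sketch supplies exactly those omitted details (global Poincar\'{e} $\Rightarrow$ spectral gap, HKE at $t\asymp\textup{diam}_U^2$ $\Rightarrow$ uniform on-diagonal bound, then the Chapman--Kolmogorov/Cauchy--Schwarz splitting).
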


Theorem \ref{equilibrium} is a consequence of $(\varphi_U^2\textup{-VD})$ and $(\varphi_U^2\textup{-PI})$, which we established in Theorems \ref{vdthm} and \ref{PIthm} respectively. It can be proven by following the reasoning in \cite{lscequil}; we omit the details. The fact that volume doubling and Poincar\'{e} inequalities together imply an estimate of the form (\ref{equilibrium1}) holds in great generality on compact domains of metric measure spaces; this implication has been used in Theorem 1.2, \cite{lierllsc}, for example. 

A priori, Theorem \ref{equilibrium} tells us that waiting for an amount of time comparable to $\text{diam}_U^2$ is sufficient for $e^{t\lambda_U}p^D_U(t,x,y)/\varphi_U(x)\varphi_U(y)$ to approach equilibrium. In the next lemma, we show that when $U$ is inner uniform, the inner diameter and usual diameter of $U$ are comparable, which allows us to replace $\text{diam}_U$ with $\text{diam}(U)$ in Theorem \ref{equilibrium}.

\begin{Lemma}
    \label{diamlemma}
    Let $U\subseteq \mathbb{S}^{n-1}$ be any $(C_0,c_0)$-inner uniform domain. Then 
    $$\textup{diam}(U)\leq \textup{diam}_{U}\leq  \frac{8}{c_0}\textup{diam}(U).$$
\end{Lemma}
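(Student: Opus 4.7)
The lower bound $\textup{diam}(U)\le \textup{diam}_U$ is immediate and requires no hypothesis on $U$: for any rectifiable curve $\gamma$ in $U$ joining $x,y\in U$, the length of $\gamma$ measured in the Riemannian metric of $\mathbb{S}^{n-1}$ upper bounds $d_{\mathbb{S}^{n-1}}(x,y)$, so $d_{\mathbb{S}^{n-1}}(x,y)\le d_U(x,y)$. Taking the supremum over $x,y\in U$ gives the left inequality.

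For the right inequality, the plan is to extract from a $(C_0,c_0)$-inner uniform curve a single point where the inner uniformity estimate (\ref{iu1}) becomes strong enough to translate a bound on the extrinsic distance to the boundary into a bound on the inner diameter. Fix $x,y\in U$ with $d_U(x,y)>0$ and let $\gamma:[0,1]\to U$ be a $(C_0,c_0)$-inner uniform curve connecting them. The first step is to locate $z^{*}=\gamma(t^{*})$ with $d_U(x,z^{*})=d_U(y,z^{*})$. This exists by applying the intermediate value theorem to the continuous function $t\mapsto d_U(x,\gamma(t))-d_U(y,\gamma(t))$, which is negative at $t=0$ and positive at $t=1$. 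Setting $s:=d_U(x,z^{*})=d_U(y,z^{*})$, the triangle inequality forces $s\ge d_U(x,y)/2$.

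The second step is to apply (\ref{iu1}) at $z^{*}$, which yields
\[
 d(z^{*},\partial U)\;\ge\; c_0\,\frac{s^{2}}{d_U(x,y)}\;\ge\;\frac{c_0}{4}\,d_U(x,y).
\]
On the other hand, for any $p\in\partial U\subseteq\overline{U}$ we have $d_{\mathbb{S}^{n-1}}(z^{*},p)\le \textup{diam}(\overline{U})=\textup{diam}(U)$, because $z^{*}\in U$ and $p$ both lie in $\overline{U}$; taking the infimum over $p$ gives $d(z^{*},\partial U)\le \textup{diam}(U)$. Combining the two bounds yields $d_U(x,y)\le (4/c_0)\,\textup{diam}(U)$; passing to the supremum over $x,y$ gives $\textup{diam}_U\le (4/c_0)\,\textup{diam}(U)\le (8/c_0)\,\textup{diam}(U)$, which is stronger than claimed (the factor $8$ is just a convenient rounding of $4$).

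No step here is truly difficult; the only small subtlety is the use of the IVT to obtain a genuine inner-distance midpoint on $\gamma$, which is crucial since arc-length midpoints do not in general satisfy a useful two-sided bound on $\min\{d_U(x,z),d_U(y,z)\}$. Everything else is a direct unpacking of Definition \ref{iudefn} together with the trivial estimate $d(z,\partial U)\le \textup{diam}(U)$ valid for any interior point $z$.
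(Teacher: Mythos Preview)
Your proof is correct and follows essentially the same idea as the paper's: locate an inner-distance midpoint on an inner uniform curve via the intermediate value theorem, then apply the inner uniformity estimate to bound $d_U(x,y)$ by a multiple of $d(z^*,\partial U)\le \textup{diam}(U)$. The paper's version is slightly less direct: it first fixes a deepest point $o\in U$ maximizing $d(\cdot,\partial U)$, bounds $d_U(x,o)\le (4/c_0)\,\textup{diam}(U)$ for each $x$ by the midpoint argument, and then uses the triangle inequality through $o$, which is what produces the constant $8/c_0$; your argument applied directly to the pair $x,y$ yields $4/c_0$, so your constant is in fact sharper.
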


\begin{proof}
    The lower bound is a trivial consequence of the fact that, for all $x,y\in U$, $d(x,y)\leq d_U(x,y)$. It remains to prove the upper bound. 
    
    Since $\overline{U}$ is compact, we can choose $o\in U$ with $d(o,\partial U)=\sup\{d(x,\partial U):x\in U\}$. Let $x\in U$ be arbitrary. From the definition of inner uniformity and (\ref{iu2}), choose a path $\gamma:[0,1]\to U$ with $\gamma(0)=x$ and $\gamma(1)=o$ such that for all $z\in \gamma([0,1])$, $$d(z,\partial U)\geq \frac{c_0}{2}\min\{d_U(x,z),d_U(o,z)\}.$$
    By continuity, we may pick $z\in \gamma([0,1])$ with $d_U(x,z)=d_U(o,z)$. Then 
    $$d_U(x,o)\leq d_U(x,z)+d_U(z,o)=2d_U(z,o)\leq \frac{4}{c_0}d(z,\partial U)\leq \frac{4}{c_0}d(o,\partial U)\leq \frac{4}{c_0}\text{diam}(U).$$
    Thus, for arbitrary $x,y\in U$, the preceding inequality gives
    $$d_U(x,y)\leq d_U(x,o)+d_U(o,y)\leq \frac{8}{c_0}\text{diam}(U). $$
    We finish the proof by taking supremum over all $x,y\in U$.
\end{proof}

Consider two annular domains $A\subseteq B\subseteq \mathbb{R}^n$. Our goal is to combine Theorem \ref{equilibrium} with the perturbation techniques developed in \cite{chaolsc} to prove an inequality of the form $\varphi_A\lesssim \varphi_U\lesssim \varphi_B$ (whenever both sides of the inequality are defined), for any arbitrary domain $U$ with $A\subseteq U\subseteq B$. The main result of this section is Theorem \ref{perturbthm}. For concrete examples of inequalities of the form $\varphi_A\lesssim \varphi_U\lesssim \varphi_B$, see Theorem \ref{illustration} as well as the examples in Section \ref{examples}.

Let $A_0,B_0\subseteq \mathbb{S}^{n-1}$ denote the spherical base of $A$ and $B$, respectively. We introduce several quantitative assumptions on the spherical bases $A_0$ and $B_0$ for our main result to hold. 

\begin{Assum}
    \label{assump1}
    \normalfont
Henceforth, we will assume that all $(C_0,c_0)$-inner uniform domains $U_0\subseteq \mathbb{S}^{n-1}$ of this section satisfy the following property: for all $\varepsilon\in (0,1]$, the domain $(1,1+\varepsilon)\times U_0$ is locally $(C_0,c_0)$-inner uniform up to scale $\varepsilon$.
\end{Assum}

\begin{Assum}
    \label{assump2}
    Let $A_0\subseteq \mathbb{S}^{n-1}$ be a domain and $\{B_0(c)\subseteq \mathbb{S}^{n-1}:c\geq 1\}$ be a family of domains such that $B_0(c)\supseteq A_0$ for all $c\geq 1$. We say that $A_0$ and $\{B_0(c):c\geq 1\}$ satisfy $\Phi$-volume growth if there is a continuous function $\Phi:[1,\infty)\to [1,\infty)$ with $\Phi(1)=1$ such that 
    \begin{align*}
        1\leq \frac{\sigma_{n-1}(B_0(c))}{\sigma_{n-1}(A_0)}\leq \Phi(c)
    \end{align*}
\end{Assum}

\begin{Assum}
    \label{assump3}
    Let $A_0\subseteq \mathbb{S}^{n-1}$ be a domain. We say that $A_0$ has $\Psi$-controlled boundary if either $A_0=\mathbb{S}^{n-1}$, or that for all $\delta\in [0,1/2]$, we have
    \begin{align*}
        \frac{\sigma_{n-1}(\{x\in A_0:\textup{dist}(x,\partial A_0)>\delta \textup{diam}_{A_0}\})}{\sigma_{n-1}(A_0)}\leq \Psi(\delta),
    \end{align*}
    for some continuous decreasing function $\Psi: [0,1/2] \to [0,1]$ with $\Psi(0)=1$.
\end{Assum}

For brevity, we also make the following convention.

\begin{Dfn}
    \normalfont
    In the remainder of this section, we say that an inequality $X\lesssim Y$ has an implied constant depending \textit{only on the key parameters} if $X\leq KY$ for a constant $K>0$ depending only on $n,C_0,c_0,C_1,C_2$ and a uniform upper bound on $\lambda(A_0)$. By \Quote{for all sufficiently small $\varepsilon>0$ depending \textit{only on the key parameters}}, we mean that a certain statement is true whenever $\varepsilon\in (0,\varepsilon_0)$, where $\varepsilon_0$ depends only on the aforementioned key parameters.  
\end{Dfn}

We now state and prove the main result of this section.

\begin{Theo}
    \label{perturbthm}
    Let $A_0\subseteq \mathbb{S}^{n-1}$ and $\{B_0(c)\subseteq \mathbb{S}^{n-1}:c\geq 1\}$ be $(C_0,c_0)$-inner uniform domains satisfying Assumption \ref{assump1} and $\Phi$-volume growth (Assumption \ref{assump2}).
    Let $\{a_{\varepsilon}\}_{\varepsilon>0}$ and $\{b_{\varepsilon}\}_{\varepsilon>0}$ be two nonnegative sequences, and, assume that for some constants $C_1,C_2>0$, $a_{\varepsilon}\in [0,C_1\varepsilon^3)$ and $b_{\varepsilon}\in [0,C_2\varepsilon^3)$ for all $\varepsilon \in (0,1)$. Put
    \begin{align*}
        & A = (1,1+\varepsilon)\times A_0\subseteq \mathbb{R}^n,\hspace{0.4in} 
        B_c=(1-a_{\varepsilon},1+\varepsilon+b_{\varepsilon})\times B_0(c)\subseteq \mathbb{R}^n.
    \end{align*}
    \begin{enumerate}
        \item  (Upper bound) For all sufficiently small $\varepsilon>0$ depending only on $C_1$ and $C_2$, for any domain $U\subseteq \mathbb{R}^n$ with $A\subseteq U\subseteq B$, we have $\varphi_U(x)\lesssim \varphi_{B_c}(x)$ for all $x\in U$. The implied constant only depends on the key parameters.
        \item (Lower bound) Suppose $A_0$ has $\Psi$-controlled boundary (Assumption \ref{assump3}). For all sufficiently small $\varepsilon>0$ and $c\geq 1$ sufficiently close to $1$, for any domain $U\subseteq \mathbb{R}^n$ with $A\subseteq U\subseteq {B_c}$, we have $\varphi_U(x)\gtrsim \varphi_A(x)$ for all $x\in A$. The implied constant only depends on the key parameters and $\Psi$. Here, $\varepsilon$ and $|c-1|$ are both sufficiently small depending only on the key parameters. Additionally, $|c-1|$ is also taken sufficiently small depending on $\Phi$.
    \end{enumerate}

\end{Theo}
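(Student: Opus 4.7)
The plan follows the strategy of \cite{chaolsc}: combine Dirichlet heat-semigroup comparison with the equilibrium estimate (Theorem \ref{equilibrium}) applied to the annular end-points ($B_c$ for the upper bound, $A$ for the lower bound), and use an eigenvalue-gap control extending Lemma \ref{eiggap} (via Lemma \ref{eigvaldecomp} and domain monotonicity) to keep all exponential prefactors bounded. Fix $t$ to be a large constant multiple of $\operatorname{diam}(B_c)^2\asymp 1$, chosen so that Theorem \ref{equilibrium}, applied on either $A$ or $B_c$ (both of which satisfy its hypotheses by Assumption \ref{assump1}), yields a two-sided multiplicative factor inside $[\tfrac12,\tfrac32]$; Lemma \ref{diamlemma} identifies intrinsic with extrinsic diameters.

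\textbf{Upper bound.} Extend $\varphi_U$ by zero to $B_c$. Since $\varphi_U\geq 0$ and $U\subseteq B_c$, domain monotonicity of the Dirichlet heat kernel gives, for $x\in U$,
$$\varphi_U(x) = e^{t\lambda_U}\int_U p^{D}_U(t,x,y)\,\varphi_U(y)\,dy \;\leq\; e^{t\lambda_U}\int_{B_c}p^{D}_{B_c}(t,x,y)\,\varphi_U(y)\,dy,$$
and Theorem \ref{equilibrium} on $B_c$ bounds the right-hand side by $\lesssim e^{t(\lambda_U-\lambda_{B_c})}\varphi_{B_c}(x)\langle\varphi_U,\varphi_{B_c}\rangle_{L^2(B_c)}$. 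Cauchy--Schwarz controls the inner product by $1$; the exponential factor is bounded by combining Lemma \ref{eiggap} (for the radial perturbation $a_\varepsilon,b_\varepsilon=O(\varepsilon^3)$) with a Hadamard-type computation for the angular perturbation $A_0\to B_0(c)$ using the factorization of Lemma \ref{lierlcompare} and $\Phi$-volume growth. Hence $\varphi_U\lesssim \varphi_{B_c}$ on $U$.

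\textbf{Lower bound.} For $x\in A$, the reverse monotonicity $p^{D}_A(t,x,y)\leq p^{D}_U(t,x,y)$, Theorem \ref{equilibrium} on $A$, and the gap bound $e^{t(\lambda_U-\lambda_A)}\gtrsim 1$ analogously yield
$$\varphi_U(x) \;\gtrsim\; \langle\varphi_U,\varphi_A\rangle_{L^2(A)}\,\varphi_A(x),\qquad x\in A,$$
so it suffices to prove $\langle\varphi_U,\varphi_A\rangle_{L^2(A)}\gtrsim 1$. The strategy has two steps. (i) \emph{Mass concentration}: using the upper bound and Lemma \ref{lierlcompare}, $\int_{U\setminus A}\varphi_U^2\lesssim \int_{B_c\setminus A}\varphi_{B_c}^2$; the radial slabs contribute $O((a_\varepsilon/\varepsilon)^3+(b_\varepsilon/\varepsilon)^3)=O(\varepsilon^3)$ by Lemma \ref{lierlcompare}, and the angular slab is bounded by $\|\varphi_{B_0(c)}\|_\infty^2\,\sigma_{n-1}(B_0(c)\setminus A_0)\lesssim \Phi(c)-1$ via Lemma \ref{maxeig} and $\Phi$-volume growth, forcing $\int_A\varphi_U^2\geq\tfrac12$ once $\varepsilon$ and $|c-1|$ are small. (ii) \emph{Correlation}: write $\varphi_U|_A=\alpha\varphi_A+w$ with $w\perp\varphi_A$ in $L^2(A)$, so $\alpha=\langle\varphi_U,\varphi_A\rangle_{L^2(A)}$; for a truncation $\chi\varphi_U\in H^1_0(A)$, with $\chi$ vanishing on a thin $d_A$-neighborhood of $\partial A\cap U$, the Dirichlet-energy bound $\int_A|\nabla(\chi\varphi_U)|^2\leq \lambda_U+o(1)\leq \lambda_A+o(1)$ combined with the spectral gap $\lambda_2(A)-\lambda_A\gtrsim 1$ (from the radial/angular splitting and Theorem \ref{equilibrium} on $A$) forces $\alpha^2$ to be a definite fraction of $\|\chi\varphi_U\|_{L^2(A)}^2$. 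The $\Psi$-controlled boundary of $A_0$ together with the boundary Harnack principle (Proposition \ref{bhp}) applied to $\varphi_U$ near $\partial A\cap U$ quantifies the truncation error, ensuring $\|\chi\varphi_U\|_{L^2(A)}^2\geq\tfrac14$ and so $\alpha\gtrsim 1$.

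The hardest step is (ii): the cutoff scale and the quantitative boundary behaviour of $\varphi_U$ must be matched so that the truncation loses only $o(1)$ of the $L^2(A)$-mass of $\varphi_U$, while simultaneously keeping the gradient term $\int_A\varphi_U^2|\nabla\chi|^2$ small enough for the spectral-gap Rayleigh-quotient argument to produce a meaningful lower bound on $\alpha^2$. This is where the $\Psi$-controlled-boundary assumption enters quantitatively; it plays a role analogous to the $\delta$-neighborhood controls used in the proof of the Euclidean-box perturbation statement, Theorem \ref{rectanglecomparison}. Once $\alpha\gtrsim 1$, the pointwise inequality $\varphi_U\gtrsim\alpha\varphi_A$ delivers $\varphi_U\gtrsim\varphi_A$ on $A$.
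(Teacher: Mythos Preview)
Your overall architecture---heat-kernel comparison plus Theorem \ref{equilibrium} on the endpoint domains, reducing the lower bound to $\langle\varphi_U,\varphi_A\rangle_{L^2(A)}\gtrsim 1$---matches the paper exactly, and your mass-concentration step (i) is essentially the paper's Step 3. Two points, however, diverge from the paper in ways worth noting.

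\textbf{Eigenvalue gap.} You propose a Hadamard-type computation for the angular perturbation $A_0\to B_0(c)$. The paper avoids this entirely: by domain monotonicity ($A\subseteq U$ and $B_c\subseteq A_{1-a_\varepsilon,1+\varepsilon+b_\varepsilon}$) one gets $\lambda_U-\lambda_{B_c}\leq \lambda_A-\lambda(A_{1-a_\varepsilon,1+\varepsilon+b_\varepsilon})$, and then Lemma \ref{eigvaldecomp} gives $\lambda_A\leq \lambda(A_{1,1+\varepsilon})+\lambda(A_0)$. The gap is thus bounded by the purely radial quantity of Lemma \ref{eiggap} plus the constant $\lambda(A_0)$; no angular variation formula is needed. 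Your route would require a derivative of $\lambda(B_0(c))$ in $c$, which the stated assumptions do not provide.

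\textbf{The correlation step.} Here there is a genuine gap. The paper does \emph{not} use a spectral-gap/Rayleigh-quotient argument with a cutoff $\chi\varphi_U\in H^1_0(A)$. Instead it uses the elementary inequality
\[
\int_A \varphi_A\varphi_U \;\geq\; \Big(\inf_{A(\delta)}\varphi_A\Big)\int_{A(\delta)}\varphi_U \;\geq\; \Big(\inf_{A(\delta)}\varphi_A\Big)\frac{\|\varphi_U\|_{L^2(A(\delta))}^2}{\|\varphi_U\|_{L^\infty(A(\delta))}},
\]
and then bounds $\inf_{A(\delta)}\varphi_A$ from below and $\|\varphi_U\|_{L^\infty}\leq C\|\varphi_{B_c}\|_{L^\infty}$ from above using Lemma \ref{lierlcompare}. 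The $\Psi$-assumption enters only to pass from $\|\varphi_U\|_{L^2(A)}^2\geq\tfrac12$ to $\|\varphi_U\|_{L^2(A(\delta))}^2\geq\tfrac14$ for a suitable fixed $\delta$. Your proposed route has a real difficulty: to put $\chi\varphi_U$ in $H^1_0(A)$ you must cut off near the angular portion of $\partial A\cap U$, but there $\varphi_U$ need not be small (it is an interior point of $U$), and Proposition \ref{bhp} does not apply since it is a \emph{boundary} Harnack principle at $\partial U$, not at $\partial A$. The only available control is $\varphi_U\lesssim\varphi_{B_c}$, and the decay of $\varphi_{B_0(c)}$ near $\partial A_0$ is not quantified by $\Phi$-volume growth alone. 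Consequently the term $\int_A\varphi_U^2|\nabla\chi|^2$ cannot be shown to be $o(\lambda_A)\|\chi\varphi_U\|_{L^2}^2$ from the stated hypotheses, and the Rayleigh-quotient bound on $\|w\|^2$ does not close. The paper's $L^2/L^\infty$ trick sidesteps this entirely.
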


\begin{proof}
    \textit{Part I - upper bound.} The parameter $c$ will not play a role in this part of the proof, so abbreviate $B=B_c$. By taking $\varepsilon>0$ sufficiently small and by a rescaling argument, we may assume that the conclusion (\ref{equilibrium1}) of Theorem \ref{equilibrium} holds for the domain $B$. By domain monotonicity of the Dirichlet heat kernel, Theorem \ref{equilibrium}, and Lemma \ref{diamlemma}, we may fix $t\asymp \text{diam}(\mathbb{S}^{n-1})^2$ and write
    \begin{align*}
        e^{-\lambda_U t}\varphi_U^{2}(x) \leq p^D_U(t,x,x)\leq p^D_{B}(t,x,x) \lesssim e^{-t\lambda_B}\varphi_B^2(x).
    \end{align*}
    The upper bound $\varphi_U\lesssim \varphi_B$ would be obtained if $\lambda_U-\lambda_B>0$ is uniformly bounded above. We write
    \begin{align}
        \label{lUlB}
        \lambda_U-\lambda_B \leq  \lambda_A-\lambda(A_{1-a_{\varepsilon},1+\varepsilon+b_{\varepsilon}}) \leq \lambda(A_{1,1+\varepsilon})-\lambda(A_{1-a_{\varepsilon},1+\varepsilon+b_{\varepsilon}})+\lambda(A_0)
    \end{align}
     The first inequality in (\ref{lUlB}) uses domain monotonicity, and the second inequality uses Lemma \ref{eigvaldecomp}. Thus, Lemma \ref{eiggap} and the assumption that $\lambda(A_0)$ is uniformly bounded above imply that the right-hand side of (\ref{lUlB}) is uniformly bounded. Hence, $\varphi_U(x)\lesssim \varphi_B(x)$.

    \textit{Part II - lower bound, step 1.} For any $t>0$ and $x\in U$,
    \begin{align*}
        e^{-\lambda_U t}\varphi_U(x)=\int_U p^D_U(t,x,y)\varphi_U(y)dy\geq \int_{A}p^D_A(t,x,y)\varphi_U(y)dy.
    \end{align*}
    By Theorem \ref{equilibrium} and Lemma \ref{diamlemma} again, we fix $t\asymp \text{diam}(\mathbb{S}^{n-1})^2$ and obtain
    \begin{align*}
        e^{-\lambda_U t}\varphi_U(x) & \geq \int_{A}e^{-\lambda_A t}\varphi_A(x)\varphi_A(y)\varphi_U(y)dy.
    \end{align*}
    The eigenvalue gap estimate in (\ref{lUlB}) then implies
    \begin{align}
    \label{LB1.1}
        \frac{\varphi_U(x)}{\varphi_A(x)}\gtrsim\int_A \varphi_A(y)\varphi_U(y)dy. 
    \end{align}
    It thus remains to bound the right-hand side of (\ref{LB1.1}) below by a positive constant. 
    
    \noindent \textit{Part II - lower bound, step 2.} For $\delta\in (0,1/2)$, define
    $$A(\delta):=I(\delta)\times A_0(\delta):=(1+\delta\varepsilon,1+\varepsilon-\delta\varepsilon)\times \{x\in A_0:\text{dist}(x,\partial A_0)>\delta\text{diam}_{A_0}\}\subseteq A.$$
        (If $A_0=\mathbb{S}^{n-1}$, then we define $A(\delta)=I(\delta)\times \mathbb{S}^{n-1}$.) The rest of the proof proceeds similarly as Theorem 4.3, \cite{chaolsc}; we sketch the main details. For any $\delta\in (0,1/2)$, it follows from Lemma \ref{lierlcompare} that
    \begin{align}
        \label{LB1.2}
        \inf_{y\in A(\delta)}\varphi_A(y) \gtrsim \frac{\delta\varepsilon}{\varepsilon^{3/2}}\cdot \frac{1}{H^{1/\delta}\sqrt{\sigma_{n-1}(A_0)}},
    \end{align}
    where $H>1$ is some constant depending only on $C_0,c_0,n$. The exact value of $H$ is unimportant; at the end of the proof we will take $\delta>0$ sufficiently small but bounded away from $0$ so that $H^{-1/\delta}$ is bounded away from $0$. 
    
    Also, by the already proven upper bound $\varphi_U\lesssim \varphi_{B_c}$ and Lemma \ref{lierlcompare}, we have
    \begin{align}
        \label{LB1.3}
        \|\varphi_U\|_{L^{\infty}(A(\delta))}\lesssim \|\varphi_{B_c}\|_{L^{\infty}(B_c)}\lesssim \frac{1}{\sqrt{\varepsilon+a_{\varepsilon}+b_{\varepsilon}}}\cdot \frac{1}{\sqrt{\sigma_{n-1}(B_0(c))}}.
    \end{align}
    By (\ref{LB1.2}) and (\ref{LB1.3}),
    \begin{align}
        \nonumber \int_A\varphi_A(y)\varphi_U(y)dy & \geq \inf_{y\in A(\delta)}\varphi_A(y) \int_{A(\delta)}\varphi_U(y)dy
        \\ \nonumber &\gtrsim \inf_{y\in A(\delta)}\varphi_A(y)\frac{\|\varphi_U\|^2_{L^2(A(\delta))}}{\|\varphi_U\|_{L^{\infty}(A(\delta))}}
        \\ \nonumber & \gtrsim \frac{\delta \varepsilon}{\varepsilon^{3/2}}\cdot \frac{1}{H^{1/\delta}}\sqrt{\frac{\sigma_{n-1}(B_0(c))}{\sigma_{n-1}(A_0)}}\sqrt{\varepsilon+a_{\varepsilon}+b_{\varepsilon}} \|\varphi_U\|^2_{L^2(A(\delta))}
        \\ \label{LB1.4} & \gtrsim \frac{\delta}{H^{1/\delta}}\|\varphi_U\|^2_{L^2(A(\delta))},
    \end{align}
    where the last inequality holds provided $\varepsilon>0$ (depending on $C_1,C_2$) is chosen sufficiently small. 

    \textit{Part II - lower bound, step 3.} In view of (\ref{LB1.4}), we will choose a $\delta>0$ so that $\|\varphi_U\|^2_{L^2(A(\delta))}$ is bounded below. First consider $A$ instead of $A(\delta)$ and note that by the already established upper bound $\varphi_U\lesssim \varphi_{B_c}$ and Lemma \ref{lierlcompare},
    \begin{align*}
        \int_A \varphi_U^2(y)dy=1-\int_{U\backslash A}\varphi_U^2(y)dy \geq 1-C\int_{{B_c}\backslash A}\varphi_{B_c}^2(y)dy\geq 1-C\frac{|B_c\backslash A|}{(\varepsilon+a_{\varepsilon}+b_{\varepsilon})\sigma_{n-1}(B_0(c))}.
    \end{align*}
    This gives us 
    \begin{align*}
        \int_A\varphi_U^2(y)dy&\geq 1-C\Bigg(1-\frac{\sigma_{n-1}(A_0)\int_1^{1+\varepsilon}r^{n-1}dr }{\sigma_{n-1}(B_0(c))\int_{1-a_{\varepsilon}}^{1+\varepsilon+b_{\varepsilon}}r^{n-1}dr }\Bigg)
        \\ & \geq 1-C\Bigg(1-\frac{1}{\Phi(c)}\cdot \frac{\int_1^{1+\varepsilon}r^{n-1}dr }{\int_{1-a_{\varepsilon}}^{1+\varepsilon+b_{\varepsilon}}r^{n-1}dr }\Bigg)
    \end{align*}
     By choosing $\varepsilon>0$  sufficiently small and choosing $c>1$ sufficiently close to $1$, we get 
    \begin{align}
        \label{LB1.5}
        \int_{A}\varphi_U^2(y)dy\geq \frac{1}{2}.
    \end{align}
    \textit{Part II - lower bound, step 4.} We complete the proof by showing that (\ref{LB1.5}) holds with $A$ replaced by $A(\delta)$, and with $1/2$ replaced by a smaller positive constant. Using the upper bound $\varphi_U\lesssim \varphi_{B_c}$ and Lemma \ref{lierlcompare} again,
    \begin{align*}
        \int_{A(\delta)}\varphi_U^2(y)dy&=\int_A\varphi_U^2(y)-\int_{A\backslash A(\delta)} \varphi_U^2(y)dy
        \\ & \geq \frac{1}{2}-C\frac{|A\backslash A(\delta)|}{(\varepsilon+a_{\varepsilon}+b_{\varepsilon})\sigma_{n-1}(B_0(c))}
        \\ & \geq \frac{1}{2} - C\frac{|A\backslash A(\delta)|}{\varepsilon \cdot \Phi(c)\cdot \sigma_{n-1}(A_0)}
        \\ & \geq \frac{1}{2}-C\frac{|A\backslash A(\delta)|}{|A|}.
    \end{align*}
     Because $A_0$ satisfies Assumption \ref{assump3}, the right-hand side can be made greater than (say) $1/4$ by picking $\delta>0$ appropriately depending on $C$ and the function $\Psi$ from Assumption \ref{assump3}. 
     
     In view of (\ref{LB1.4}), the proof is now complete.
\end{proof}

Theorem \ref{perturbthm} immediately implies the following result regarding the stability of the principal Dirichlet eigenfunction when perturbing the domain in the radial direction.

\begin{Cor}
    \label{perturbcor}
   Let $A_0\subseteq \mathbb{S}^{n-1}$ be a $(C_0,c_0)$-inner uniform domain satisfying Assumption \ref{assump1}. Let $\{a_{\varepsilon}\}_{\varepsilon>0}$ and $\{b_{\varepsilon}\}_{\varepsilon>0}$ be two nonnegative sequences such that for some $C_1,C_2>0$, $a_{\varepsilon}\in [0,C_1\varepsilon^3)$ and $b_{\varepsilon}\in [0,C_2\varepsilon^3)$ for all $\varepsilon\in (0,1)$. Put
   $$A=(1,1+\varepsilon)\times A_0\subseteq \mathbb{R}^n,\hspace{0.2in}B=(1-a_{\varepsilon},1+\varepsilon+b_{\varepsilon})\times A_0\subseteq \mathbb{R}^n.$$
   For all sufficiently small $\varepsilon>0$ depending only on the key parameters, for any domain $U$ with $A\subseteq U\subseteq B$, we have
   \begin{align}
        \label{stability1}
       \frac{\min\{r-1,1+\varepsilon-r\}}{\varepsilon^{3/2}}\varphi_{A_0}(\theta)\lesssim \varphi_U(r,\theta)\lesssim \frac{\min\{r-1,1+\varepsilon-r\}}{\varepsilon^{3/2}}\varphi_{A_0}(\theta),
   \end{align}
   for any $ r\in [1+a_{\varepsilon},1+\varepsilon-b_{\varepsilon}]$ and $\theta\in A_0$. The implied constants in both the lower bound and upper bounds of (\ref{stability1}) depend only on the key parameters. Additionally, the implied constant in the lower bound also depends on $\Psi$.  
\end{Cor}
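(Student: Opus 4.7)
The plan is to specialize Theorem \ref{perturbthm} to the trivial constant family $B_0(c) \equiv A_0$ and then read off the explicit comparisons from Lemma \ref{lierlcompare}. With $B_0(c) = A_0$ for all $c\geq 1$, the $\Phi$-volume growth hypothesis (Assumption \ref{assump2}) holds trivially with $\Phi \equiv 1$, so the requirement ``$|c-1|$ sufficiently small depending on $\Phi$'' in the lower bound of Theorem \ref{perturbthm} is vacuous; and since $A_0$ is a fixed bounded domain, it automatically admits some function $\Psi$ as in Assumption \ref{assump3}, at the cost of the implied constant depending on that $\Psi$. Applying Theorem \ref{perturbthm} to the pair $A\subseteq B$ therefore yields, for all sufficiently small $\varepsilon>0$ depending only on the key parameters and for any $U$ with $A\subseteq U\subseteq B$, the two-sided sandwich
\[
\varphi_A(x) \lesssim \varphi_U(x) \text{ for } x\in A, \qquad \varphi_U(x) \lesssim \varphi_B(x) \text{ for } x\in U.
\]

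Next I would invoke Lemma \ref{lierlcompare} to turn these abstract comparisons into the explicit form stated in the corollary. For $A = (1,1+\varepsilon)\times A_0$ the lemma gives, on all of $A$,
\[
\varphi_A(r,\theta) \asymp \frac{\min\{r-1,\, 1+\varepsilon - r\}}{\varepsilon^{3/2}}\,\varphi_{A_0}(\theta),
\]
which immediately supplies the lower bound in (\ref{stability1}) on the full strip, in particular on the subregion $r\in[1+a_\varepsilon,\,1+\varepsilon-b_\varepsilon]$. For $B = (1-a_\varepsilon,\,1+\varepsilon+b_\varepsilon)\times A_0$, the ratio $(1+\varepsilon+b_\varepsilon)/(1-a_\varepsilon)$ lies in $(1,2]$ for all small enough $\varepsilon$, so Lemma \ref{lierlcompare} applies and yields
\[
\varphi_B(r,\theta) \asymp \frac{1}{(1-a_\varepsilon)^{(n-1)/2}}\cdot\frac{\min\{r-(1-a_\varepsilon),\,(1+\varepsilon+b_\varepsilon)-r\}}{(\varepsilon+a_\varepsilon+b_\varepsilon)^{3/2}}\,\varphi_{A_0}(\theta).
\]

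The only remaining step is an elementary calculation showing that, on the restricted range $r\in[1+a_\varepsilon,\,1+\varepsilon-b_\varepsilon]$, the right-hand side of the previous display is comparable to $\varepsilon^{-3/2}\min\{r-1,\,1+\varepsilon-r\}\,\varphi_{A_0}(\theta)$. Using $a_\varepsilon,b_\varepsilon\lesssim \varepsilon^3$, one sees that $(1-a_\varepsilon)^{(n-1)/2}\asymp 1$ and $(\varepsilon+a_\varepsilon+b_\varepsilon)^{3/2}\asymp \varepsilon^{3/2}$; moreover on this subregion $r-1\geq a_\varepsilon$ and $1+\varepsilon-r\geq b_\varepsilon$, so
\[
r-1 \;\leq\; r-(1-a_\varepsilon) \;\leq\; 2(r-1), \qquad 1+\varepsilon-r \;\leq\; (1+\varepsilon+b_\varepsilon)-r \;\leq\; 2(1+\varepsilon-r),
\]
which gives the needed comparison of the two minima. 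Combining this with $\varphi_U\lesssim \varphi_B$ delivers the upper bound, and the corollary follows. There is no real obstacle here beyond bookkeeping: Theorem \ref{perturbthm} does the heavy lifting, and the only care required is in verifying that the enlarged radii remain in the regime where the factorization of Lemma \ref{lierlcompare} produces the same tent-shaped radial profile up to universal constants.
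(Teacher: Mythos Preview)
Your proposal is correct and follows exactly the route the paper intends: the paper states the corollary as an immediate consequence of Theorem \ref{perturbthm} without further proof, and your argument simply makes explicit the specialization $B_0(c)\equiv A_0$ together with the application of Lemma \ref{lierlcompare} and the elementary comparison of the tent profiles on the restricted radial range. The bookkeeping you supply (trivial $\Phi\equiv 1$, existence of some $\Psi$ for $A_0$, and the factor-of-two comparison of the minima when $r\in[1+a_\varepsilon,1+\varepsilon-b_\varepsilon]$) is precisely what is needed to turn the paper's one-line justification into a complete proof.
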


\subsection{Eigenfunction perturbation: Euclidean boxes}
\label{sectionbox}

In a fixed dimension $n\geq 2$, the collection of all boxes is not inner uniform (consider a box with one side much longer than all of the other sides). However, by Theorem \ref{boxHK}, any box $B$ satisfies ($\varphi_B^2$-VD), ($\varphi_B^2$-PI), and Dirichlet heat kernel estimates (\ref{HKE}) uniformly with all constants depending only on $n$. Theorem \ref{boxHK} is only included for completeness and will not be used subsequently.

\begin{Theo}
    \label{boxHK}
    Let $B=\prod_{i=1}^{n}(-a_i,a_i)\subseteq \mathbb{R}^n$ be a box. Let $\varphi_B$ denote the Dirichlet Laplacian eigenfunction of $B$, normalized so that $\|\varphi_B\|_{L^2(B)}=1$. Then $B$ satisfies ($\varphi_B^2$-$\textup{VD}$) and ($\varphi_B^2$-$\textup{PI}$), with constants depending only on the dimension $n$. Consequently, $B$ satisfies Dirichlet heat kernel estimates of the form (\textup{\ref{HKE}}) with constants depending only on the dimension $n$. 
\end{Theo}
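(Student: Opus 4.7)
The plan is to exploit the product structure of the box. Writing $B = \prod_{i=1}^{n} I_i$ with $I_i = (-a_i, a_i)$, the box is convex, so the geodesic distance $d_B$ coincides with the Euclidean distance. From the explicit formula (\ref{boxphilambda}), $\varphi_B$ factorizes as $\varphi_B(x) = \prod_i \varphi_{I_i}(x_i)$, so the weighted measure $\varphi_B^2 \, dx$ is the tensor product of the one-dimensional weighted measures $\varphi_{I_i}^2 \, dx_i$. This reduces the whole problem to (i) the one-dimensional case and (ii) a tensorization step, followed by (iii) a metric comparison.

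First, I would establish the one-dimensional case: for any $I = (-a, a)$, the space $(I, |\cdot|, \varphi_I^2 \, dx)$ satisfies VD and PI with absolute constants. By scaling, it suffices to consider $a = 1$, where the weight $\cos^2(\pi x/2)$ satisfies $\cos^2(\pi x/2) \asymp (1-|x|)^2$ near the boundary and is bounded above and below in the interior; volume doubling then follows from a direct integration, and the Poincaré inequality reduces to a one-dimensional Hardy-type bound for the weight $(1-|x|)^2$. Alternatively, $I$ is an inner uniform domain in $\mathbb{R}$, so the general theorems of Lierl and Saloff-Coste \cite{lierllsc} already supply uniform $(\varphi_I^2\text{-VD})$ and $(\varphi_I^2\text{-PI})$ as a special case.

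Second, I would tensorize with respect to the $\ell^\infty$ metric $d_\infty(x, y) = \max_i |x_i - y_i|$. Balls of $d_\infty$ on $B$ are exactly products of one-dimensional balls in the $I_i$, and the product measure preserves doubling: the one-dimensional doubling constant iterated $n$ times yields $(\varphi_B^2\text{-VD})$ on $(B, d_\infty, \varphi_B^2 \, dx)$ with constant depending only on $n$. For Poincaré, the standard two-step argument applies: write
\begin{equation*}
f(x_1, \ldots, x_n) - f_{B_\infty} = \sum_{k=1}^{n} \bigl( g_k(x_1,\ldots,x_k) - g_{k-1}(x_1,\ldots,x_{k-1}) \bigr),
\end{equation*}
where $g_k$ denotes the average of $f$ over the last $n-k$ coordinates. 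Applying the one-dimensional PI on each slice and using Jensen's inequality to pass derivatives under the averages yields PI on $(B, d_\infty, \varphi_B^2 \, dx)$ with enlargement constant $1$ and overall constant depending only on $n$.

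Third, I would transfer from $d_\infty$ to the Euclidean metric $d_B$ via the bi-Lipschitz equivalence $d_\infty \leq d_B \leq \sqrt{n}\, d_\infty$. Both VD and PI are stable under such comparisons, with constants depending only on $n$ and the original VD/PI constants. The heat kernel estimate (\ref{HKE}) then follows from the general equivalence $(\text{VD}) + (\text{PI}) \Leftrightarrow (\text{HKE})$ of Grigor'yan \cite{grigoryan}, Saloff-Coste \cite{saloff-coste}, and Sturm \cite{sturm}, applied to the Dirichlet form after the Doob $h$-transform by $h = \varphi_B$. The only nontrivial step is the one-dimensional PI with an absolute constant; the tensorization and metric comparison are routine. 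I expect this to be the main technical point, but it is handled either by the direct Hardy-type computation or by invoking \cite{lierllsc}.
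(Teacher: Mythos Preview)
Your proposal is correct and follows essentially the same approach as the paper: reduce to the one-dimensional case via \cite{lierllsc} (intervals being uniformly inner uniform), tensorize using that the Euclidean distance is comparable to $\max_i|x_i-y_i|$ so that balls factor, and then invoke the general $(\text{VD})+(\text{PI})\Leftrightarrow(\text{HKE})$ equivalence. The paper in fact gives fewer details than you do, simply calling the tensorization step ``routine'' and leaving it to the reader.
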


\begin{proof}
    Note that the collection of bounded intervals in $\mathbb{R}$ is uniformly inner uniform, so by e.g. \cite{lierllsc}, $I=I_i:=(-a_i,a_i)$ satisfies ($\varphi^2_{I}$-VD) and ($\varphi_{I}^2$-PI). The rest of the proof is just a routine argument showing that volume doubling and Poincar\'{e} inequalities are preserved under Cartesian products; we leave out the details. (To make sure that the balls in the definitions of ($\varphi_B^2$-\text{VD}) and ($\varphi_B^2$-\text{PI}) factor into balls of $I_i$, we observe that the Euclidean distance $|x-y|$ on $B$ is comparable to $\max_i |x_i-y_i|$.) 
\end{proof}

For the remainder of this section, we aim to prove Theorem \ref{rectanglecomparison}. Recall that every Dirichlet Laplacian eigenfunction of $B=\prod_{i=1}^{n}(-a_i,a_i)$ is of the form
    $$(x_1,x_2,...,x_n)\mapsto \prod_{i=1}^{n}\frac{1}{\sqrt{a_i}}\sin\Big(\frac{N_i\pi (x_i+a_i)}{2a_i}\Big),\hspace{0.1in}N_1,N_2,...,N_n\in \{1,2,3,...\}.$$
    In particular, $\varphi_B$ corresponds to $N_i=1$ for all $i$, and is given by (\ref{boxphilambda}).

We start with an analog of Theorem \ref{equilibrium} for Euclidean boxes. 

\begin{Lemma}
    \label{equilibriumbox}
    Let $B=\prod_{i=1}^{n}(-a_i,a_i)\subseteq \mathbb{R}^n$ be a box. For all $x,y\in B$ and $t>0$, we have
    \begin{align}
        \label{equilibriumbox1}
     \frac{e^{\lambda_B t}p^D_B(t,x,y)}{\varphi_B(x)\varphi_B(y)}\lesssim \prod_{i=1}^{n}\Big\{1+\Big(\frac{a_i}{\sqrt{t}}\Big)^3\Big\}.
    \end{align}
    Moreover, if $t\geq a_i^2$ for all $i=1,2,...,n$, then
    \begin{align}
        \label{equilibriumbox2}
    \prod_{i=1}^{n}\Big\{1-\Big(\frac{a_i}{\sqrt{t}}\Big)^3\Big\}\lesssim \frac{e^{\lambda_B t}p^D_B(t,x,y)}{\varphi_B(x)\varphi_B(y)}.
    \end{align}
    All of the implied constants depend only on $n$.
\end{Lemma}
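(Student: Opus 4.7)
The proof reduces to a one-dimensional calculation by exploiting the product structure. Since $B = \prod_{i=1}^n I_i$ with $I_i = (-a_i, a_i)$, the Dirichlet heat kernel, principal eigenfunction, and principal eigenvalue all factorize:
\[
p^D_B(t,x,y) = \prod_i p^D_{I_i}(t,x_i,y_i), \quad \varphi_B(x) = \prod_i \varphi_{I_i}(x_i), \quad \lambda_B = \sum_i \lambda_{I_i}.
\]
Setting $R_i := e^{\lambda_{I_i} t} p^D_{I_i}(t, x_i, y_i)/[\varphi_{I_i}(x_i)\varphi_{I_i}(y_i)]$, the left-hand side of (\ref{equilibriumbox1}) equals $\prod_{i=1}^n R_i$, so it suffices to prove a sharp two-sided 1D estimate on each $R_i$.

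For $I = (-a, a)$ the spectral data are explicit: $\varphi_k(x) = a^{-1/2}\sin(k\pi(x+a)/(2a))$ and $\lambda_k = (k\pi/(2a))^2$. Substituting into the spectral expansion of $p^D_I$ yields
\[
R = 1 + \sum_{k=2}^{\infty} e^{-(k^2-1)\tau}\,\frac{\sin(ku)\sin(kv)}{\sin(u)\sin(v)}, \quad \tau := \frac{\pi^2 t}{4a^2},\ u,v \in (0,\pi).
\]
Using the Chebyshev-polynomial bound $|\sin(k\theta)/\sin(\theta)| \leq k$ on $(0, \pi)$, the tail is bounded in absolute value by $E(\tau) := \sum_{k \geq 2} k^2 e^{-(k^2-1)\tau}$. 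The key one-dimensional estimate I would establish is $E(\tau) \lesssim \tau^{-3/2}$ for all $\tau > 0$: for small $\tau$ this follows by differentiating Jacobi's theta identity $\sum_k e^{-k^2\tau} = \sqrt{\pi/\tau}\sum_k e^{-\pi^2 k^2/\tau}$ to get $\sum_{k \geq 1} k^2 e^{-k^2\tau} \sim (\sqrt{\pi}/4)\tau^{-3/2}$, while for large $\tau$ the exponential decay of the leading $k=2$ term dominates. Since $(a/\sqrt{t})^3 = (8/\pi^3)\tau^{-3/2}$, this translates to $|R - 1| \leq C_1 (a/\sqrt{t})^3$ for some absolute constant $C_1$, and a direct numerical check on the restricted range $\tau \geq \pi^2/4$ (i.e.\ $t \geq a^2$) sharpens this to $|R - 1| \leq (a/\sqrt{t})^3$ with constant exactly $1$, since $E(\tau)$ is there dominated by $4e^{-3\tau}$, which is orders of magnitude smaller than $\tau^{-3/2}$.

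The upper bound (\ref{equilibriumbox1}) then follows from $R_i \leq 1 + C_1(a_i/\sqrt{t})^3$ combined with the elementary inequality $1 + Cy \leq (1+C)(1+y)$ taken over the product, giving a dimensional factor $(1+C_1)^n$. For the lower bound (\ref{equilibriumbox2}), the hypothesis $t \geq a_i^2$ for all $i$ combined with the sharpened constant-$1$ bound yields $R_i \geq 1 - (a_i/\sqrt{t})^3 \geq 0$ for every $i$; multiplying these nonnegative quantities gives $\prod_i R_i \geq \prod_i(1 - (a_i/\sqrt{t})^3)$, exactly (\ref{equilibriumbox2}) with implicit constant $1$. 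The main obstacle is the tail estimate $E(\tau) \lesssim \tau^{-3/2}$ across the full range of $\tau$; the sharpening to constant $1$ on $\tau \geq \pi^2/4$ is what makes the product comparison in the lower bound work, for otherwise $1 - C_1(a_i/\sqrt{t})^3$ could be negative at $t \asymp a_i^2$ and the straightforward factorwise argument would collapse.
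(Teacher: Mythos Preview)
Your proof is correct and follows essentially the same route as the paper: reduce to $n=1$ by the product factorization, expand $p^D_I$ spectrally, use the Chebyshev-type bound $|\sin(k\theta)/\sin\theta|\le k$ to control the tail by $\sum_{k\ge 2}k^2e^{-(k^2-1)\tau}$, and show this sum is $\lesssim \tau^{-3/2}$ (the paper does this by a direct integral comparison rather than Jacobi's theta identity, but the effect is identical). Your explicit verification that the constant can be taken equal to $1$ on the range $\tau\ge \pi^2/4$, so that each $R_i\ge 1-(a_i/\sqrt{t})^3\ge 0$ and the product argument for the lower bound goes through, is in fact more careful than the paper's treatment, which simply asserts that the one-dimensional estimate implies both \eqref{equilibriumbox1} and \eqref{equilibriumbox2} without addressing the sign issue you flag.
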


\begin{proof}
    Suppose first the dimension $n=1$ and $B=(-a,a)$. Let $\lambda_j(B)=(j\pi)^2/(2a)^2$ denote the Dirichlet eigenvalues of $B$ in increasing order, and let $\varphi_j(x)=a^{-1/2}\sin(j\pi (x+a)/(2a))$ be the corresponding eigenfunctions. For all $t>0$,
    \begin{align*}
        \Bigg|\frac{e^{\lambda_B t}p^D_B(t,x,y)}{\varphi_B(x)\varphi_B(y)}-1\Bigg|& \leq  \sum_{j=2}^{\infty}e^{-\lambda_j(B)t+\lambda_1(B)t}\Bigg\{\sup_B\Bigg |\frac{\varphi_j}{\varphi_B}\Bigg|\Bigg\}^2
        \\ & = \sum_{j=2}^{\infty}\exp\Big(t\Big(\frac{\pi}{2a}\Big)^2(1-j^2)\Big)j^2
        \\ & \leq \sum_{j=2}^{\infty}\exp\Big(-t\Big(\frac{\pi}{2a}\Big)^2 \frac{3j^2}{4}\Big)j^2.
    \end{align*}
    An integral estimate yields
    \begin{align}
        \label{intestimate}\Bigg|\frac{e^{\lambda_B t}p^D_B(t,x,y)}{\varphi_B(x)\varphi_B(y)}-1\Bigg|\lesssim \int_0^{\infty}\exp\Big(-\frac{3\pi^2 t x^2}{16a}\Big)x^2 dx\lesssim \Big(\frac{a}{\sqrt{t}}\Big)^3,
    \end{align}
    with all implied constants absolute. When $n=1$, the estimate (\ref{intestimate}) implies (\ref{equilibriumbox1}) and (\ref{equilibriumbox2}). For a general box $B=\prod_{i=1}^{n}(-a_i,a_i)$ in $\mathbb{R}^n$, note that we have the identity
    $$\frac{e^{\lambda _Bt}p^D_B(t,x,y)}{\varphi_B(x)\varphi_B(y)}=\prod_{i=1}^{n}\frac{e^{\lambda_{(-a_i,a_i)}}p^D_{(-a_i,a_i)}(t,x_i,y_i)}{\varphi_{(-a_i,a_i)}(x_i)\varphi_{(-a_i,a_i)}(y_i)},$$
    which together with (\ref{intestimate}) gives the desired estimates (\ref{equilibriumbox1}) and (\ref{equilibriumbox2}). 
\end{proof}

Now we give the proof of Theorem \ref{rectanglecomparison}.

\begin{proof}
    The proof is structurally similar to that of Theorem \ref{perturbthm}, so we only focus on the difference between the two proofs.

    For the upper bound, domain monotonicity of the Dirichlet heat kernel and Lemma \ref{equilibriumbox} imply that for all $t>0$
    \begin{align}
        \label{rect1}
        e^{-\lambda_{B_1}t}\varphi_U^2(x)\leq e^{-\lambda_U t}\varphi_U^2(x)\lesssim e^{-\lambda_{B_2}t} \varphi_{B_2}^2(x)\cdot \prod_{i=1}^{n}\Bigg\{1+\Big(\frac{b_i}{\sqrt{t}}\Big)^3\Bigg\}.
    \end{align}
    Fixing $t\asymp \max_i b_i^2$, assumption (\ref{recthyp1}) of the theorem gives
    \begin{align}
        \label{rect2}
        (\lambda_{B_1}-\lambda_{B_2})t\lesssim (\max_ib_i^2)\sum_{i=1}^{n}\Big(\frac{1}{a_i^2}-\frac{1}{b_i^2}\Big)\leq C_1n.
    \end{align}
    Combining (\ref{rect1}) and (\ref{rect2}) gives the upper bound. 
    
    For the lower bound, the inequality (\ref{rect2}) and the upper bound $\varphi_U\lesssim \varphi_{B_2}$ are sufficient to adapt the proof of Theorem \ref{perturbthm}; we get the inequalities
    \begin{align}
        \label{adapt1}
        \frac{\varphi_U(x)}{\varphi_{B_1}(x)}\gtrsim \cos\Big(\frac{\pi(1-\delta)}{2}\Big)^n\int_{(1-\delta)B_1}\varphi_U^2(y)dy
    \end{align}
    and 
    \begin{align}
        \label{adapt2}
        \int_{B_1(\delta)} \varphi_U^2(y)dy&\geq 1-C\int_{B_2\backslash B_1(\delta) }\varphi_{B_2}^2(y)dy
        \\ \label{adapt3} & \geq 1-C'\Big(\frac{(b_1\cdots b_n)-(1-\delta)^n (a_1\cdots a_n)}{b_1\cdots b_n} \Big)
        \\ \label{adapt4} & \geq 1-C'\Big(1-\frac{(1-\delta)^n}{C_2^n}\Big)
    \end{align}
    We used assumption (\ref{recthyp2}) to go from (\ref{adapt3}) to (\ref{adapt4}). The implied constant in (\ref{adapt1}) and the constants $C,C'>1$ in (\ref{adapt2}), (\ref{adapt3}) all only depend on $n$ and $C_1$. Thus we can choose $C_2=C_2(C_1,n)$ sufficiently close to $1$ so that the right-hand side of (\ref{adapt4}) is positive for some $\delta\in (0,1)$. For this particular value of $\delta$, combining (\ref{adapt1}) and (\ref{adapt4}) gives the desired lower bound.
\end{proof}

\section{Appendix}

\subsection{A partition of unity adapted to the \texorpdfstring{$\varepsilon$}{TEXT}-net \texorpdfstring{$X$}{TEXT}}\label{partitionofunity}

Given an $\varepsilon$-net $X$ as in Section \ref{PIsubsection}, we construct a partition of unity $\{\theta_x\}_{x\in X}$ with the properties (\ref{pou}). The construction of such a partition of unity can be found in e.g. Page 235, \cite{kanai}, and has been used by many authors including \cite{coulsc} and  \cite{muruchain}. For the existence of $\{\theta_x\}_{x\in X}$ in a metric measure space, the reader may find Lemma 2.5, \cite{muruchain} helpful. We now explain the construction of $\{\theta_x\}_{x\in X}$ in our context. First, we note that for any subset $V\subseteq U$,  $\|\nabla d_U(\cdot,V)\|_{\infty}\leq 1$, see the discussion on Page 40, \cite{gyryalsc}. 
For each $x\in X$, we consider the bump function
$$\overline{\theta}_x(\cdot):=\Big(1-\frac{d_U(\cdot,B_U(x,\varepsilon/2))}{\varepsilon}\Big)_+,$$
and we put $\theta_x(\cdot) =\overline{\theta}_x(\cdot)/\sum_{x'\in X}\bar{\theta}_x(\cdot )$. This gives a partition of unity satisfying (\ref{pou}).

\subsection{Neumann heat kernel estimates for thin annular domains}\label{neumann}

All of our main results as well as examples in Section \ref{examples} are with Dirichlet boundary conditions, but some of them also hold under Neumann boundary conditions. For a bounded domain $U\subseteq \mathbb{R}^n$, the smallest eigenvalue of the Neumann Laplacian $-\Delta$ equals zero, corresponding to eigenfunction $\varphi_U^{N}\equiv 1/\sqrt{|U|}$, where $|U|$ is the Lebesgue measure of $U$. The proofs of volume doubling and Poincar\'{e} inequalities with respect to the Dirichlet eigenfunction $\varphi_U$ are easily adapted for the Neumann eigenfunction $\varphi^{N}_U$ (and in fact much simpler because all principal eigenfunctions are constants). This gives us the following result.

    \begin{Theo}
        \label{neumannHK}
        Let $U_0=\mathbb{S}^{n-1}$ or let $U_0\subseteq \mathbb{S}^{n-1}$ be a $(C_0,c_0)$-inner uniform domain. Fix $K>1$ and consider an annular domain $U=(a,b)\times U_0$ with $b/a\in (1,K)$. Suppose $U$ is locally $(C_0,c_0)$-inner uniform at scale $b-a$. Let $\varphi_U=\varphi_U^{N}=1/\sqrt{|U|}$ denote the first Neumann eigenfunction of $U$. Then $U$ satisfies ($\varphi_U^2$\textup{-VD}) and ($\varphi_U^2$\textup{-PI}) with constants depending only on $n,C_0,c_0$ and $K$. Consequently, the Neumann heat kernel $p^N_U(t,x,y)$ admits the estimates
        \begin{align}
            \label{neumannHK1}
            \frac{c_1\exp(-\frac{d_U(x,y)^2}{c_2t})}{\sqrt{|B_U(x,\sqrt{t})|}\sqrt{|B_U(y,\sqrt{t})|}}\leq p^N_U(t,x,y)\leq \frac{c_3\exp(-\frac{d_U(x,y)^2}{c_4t})}{\sqrt{|B_U(x,\sqrt{t})|}\sqrt{|B_U(y,\sqrt{t})|}},
        \end{align}
        where $c_1,c_2,c_3,c_4$ are constants depending only on $n,C_0,c_0,K$. In (\ref{neumannHK1}), $d_U$ is the geodesic distance on $U$, and $|B_U(x,\sqrt{t})|$ denotes the Lebesgue measure of a ball $B_U(x,\sqrt{t})\subseteq U$ with respect to $d_U$. 
    \end{Theo}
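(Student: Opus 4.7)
The approach is a direct simplification of the proofs of Theorems \ref{vdthm} and \ref{PIthm}, exploiting the fact that the Neumann principal eigenfunction $\varphi_U^N \equiv 1/\sqrt{|U|}$ is constant. In particular, $((\varphi_U^N)^2$-VD$)$ and $((\varphi_U^N)^2$-PI$)$ are equivalent to volume doubling and Poincar\'{e} inequalities on $(U, d_U, dx)$ with Lebesgue measure. Once these are established, the two-sided estimates (\ref{neumannHK1}) follow from the general equivalence (VD)$+$(PI)$\Leftrightarrow$(HKE) of Grigor'yan, Saloff-Coste, and Sturm, applied to the Neumann Dirichlet form.

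For volume doubling, I would first observe that because $b/a \in (1,K)$, the polar Jacobian satisfies $r^{n-1} \asymp_K a^{n-1}$ uniformly on $U$, so $dx$ is comparable to the product measure $a^{n-1}\, dr \otimes d\sigma_{n-1}(\theta)$. Using the (trivially rescaled) version of Lemma \ref{distcomp0}, geodesic balls $B_{\widetilde{U}}(z, s)$ are sandwiched between products of radial intervals in $(a,b)$ and base balls $B_{\widetilde{U_0}}(z, s)$, with constants depending only on $K$. Doubling for $(U, d_U, dx)$ then reduces to the trivial doubling of $(a,b)$ with Lebesgue measure, together with the doubling of $(U_0, d_{U_0}, d\sigma_{n-1})$ for inner uniform $U_0 \subseteq \mathbb{S}^{n-1}$, the latter being a standard consequence of the inner uniform structure on a Riemannian manifold of nonnegative Ricci curvature. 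This mirrors the structure of the proof of Theorem \ref{vdthm}, with $\varphi_U^2$ replaced by the constant weight.

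For Poincar\'{e} inequalities, I would separately handle small and large scales. At scales $r \lesssim b-a$, local $(C_0,c_0)$-inner uniformity of $U$ yields a Neumann Poincar\'{e} inequality on geodesic balls, via the constant-weight version of Theorem \ref{lierlPI}(2). For scales $r \gtrsim b-a$, I would run the Coulhon--Saloff-Coste discretization argument of Section \ref{PIsubsection} essentially verbatim, replacing the weight $\varphi_U^2$ throughout Lemmas \ref{weightcomp}, \ref{coulsc5.3}, \ref{gradcomp}, and \ref{dPI} by the constant $1/|U|$ and using $d\sigma_{n-1}$ in place of $\varphi_{U_0}^2 d\sigma_{n-1}$ on the base. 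The required discrete Poincar\'{e} inequality on the $\varepsilon$-net of $U_0$ then follows from the Neumann PI on the inner uniform base $U_0$, which is classical for inner uniform (in particular John) domains on $\mathbb{S}^{n-1}$.

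The main obstacle is not conceptual but one of careful bookkeeping: one must verify that the Neumann Poincar\'{e} inequality on $U_0$ with surface measure is available with constants depending only on $n, C_0, c_0$, and that the small-scale PI at scale $b-a$ is uniform in $b-a$. Both are standard in the theory of inner uniform/John domains and follow from the arguments of \cite{lierllsc}, simplified by dropping the Doob transform. Since no analog of the caricature comparison Lemma \ref{lierlcompare} is required when the weight is constant, the overall proof is strictly easier than the Dirichlet version.
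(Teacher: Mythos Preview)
Your proposal is correct and follows essentially the same approach as the paper's own sketch: reduce by dilation, establish doubling via the product structure and the analog of Lemma~\ref{distcomp0}, obtain small-scale Poincar\'e from local inner uniformity, and pass to large scales by the Coulhon--Saloff-Coste discretization, with the heat kernel bounds then following from the Grigor'yan--Saloff-Coste--Sturm equivalence. The only point the paper flags more explicitly is that the small-scale Neumann Poincar\'e inequality (with constant weight) is not literally Theorem~\ref{lierlPI}(2) but rather comes from Theorem~3.34 of \cite{gyryalsc}; you correctly identify this as a bookkeeping issue.
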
   

    Theorem \ref{neumannHK} gives many families of thin annular domains which are not inner uniform but satisfy Neumann heat kernel estimates of the form (\ref{neumannHK1}) uniformly. Hence, Theorem \ref{neumannHK} complements  Neumann heat kernel estimates on inner uniform domains studied by Gyrya and Saloff-Coste \cite{gyryalsc}. For example, Theorem \ref{neumannHK} applies to the domains in Examples \ref{harmonicexample}, \ref{unitcircle}, \ref{2sphere}, and \ref{vonkochexample}.  Following an argument similar to the proof of Theorem \ref{boxHK}, we can also deduce that Neumann heat kernel estimates of the form (\ref{neumannHK1}) holds uniformly for all boxes in $\mathbb{R}^n$, with implied constants depending only on $n$. 

    We do not write out the proof of Theorem \ref{neumannHK}, but just briefly comment on how it is similar to the case of Dirichlet boundary conditions. To prove volume doubling and Poincar\'{e}, it suffices to (by a dilation argument) handle the case when the inner radius $a=1$. The hypothesis that $b/a$ is uniformly bounded above in Theorem \ref{neumannHK} implies that $d_U$ is comparable to the maximum of the metrics on each of its factors $(a,b)$ and $U_0$. This yields an analog of Lemma \ref{distcomp0}, which in turn allows us to prove volume doubling as in Theorem \ref{vdthm}.
    Next, the hypothesis of local inner uniformity implies Poincar\'{e} inequalities $(\varphi_U^2\text{-PI})$ (Definition \ref{pidefn}) at scale $b-a$. Since we have Neumann boundary conditions, this assertion about ($\varphi_U^2$-PI) is not easy to find directly in the literature, but it essentially follows from Theorem 3.34 of \cite{gyryalsc}. 
    
    These observations allow us to imagine the \Quote{thin spherical shell} $U$ as \textit{quasi-isometric} to the spherical domain $U_0$, and they suffice to carry out again the discretization argument of Coulhon and Saloff-Coste \cite{coulsc} of Section \ref{PIsubsection}. This way, we obtain ($\varphi_U^2$-PI) as in Theorem \ref{PIthm}. Combining ($\varphi_U^2$-VD) and ($\varphi_U^2$-PI) then gives us (\ref{HKE}).

\subsection{A counterexample to volume doubling}
\label{counterexample}

In this section, we give an explicit example of a family of bounded convex domains $U\subseteq \mathbb{R}^2$ which fails to satisfy $(\varphi_U^2\textup{-VD})$ (Definition \ref{vddefn}) uniformly. This shows that the family of metric measure spaces $(\overline{U},d,\varphi_U^2 dx)$, where $d$ is the Euclidean distance and where $U$ ranges over all convex domains in $\mathbb{R}^2$, fails to be uniformly $\varphi_U^2$-volume doubling. By considering Cartesian products of domains, the same is true with $\mathbb{R}^2$ replaced by $\mathbb{R}^n$ for $n\geq 3$.

For small $\beta>0$, consider the circular sector
$$U_{\beta}:=\{(r,\theta)\in \mathbb{R}^2:r\in (0,1),\theta \in (0,\pi \beta)\}.$$
The first Dirichlet Laplacian eigenfunction of $U_{\beta}$, up to an $L^2(U_{\beta})$-normalizing constant so that $\int_{U_{\beta}}\varphi_{U_{\beta}}^2dx=1$, is given by
$$\varphi_{U_{\beta}}(r,\theta)=J_{1/\beta}(\alpha_{\beta}r)\sin\Big(\frac{\theta}{\beta}\Big),$$
where $J_{1/\beta}(z)$ is the Bessel function of the first kind of order $1/\beta$, i.e.
$$J_{1/\beta}(r)=\sum_{k=0}^{\infty}(-1)^{k}\frac{(r/2)^{1/\beta+2k}}{k!\Gamma(k+1/\beta+1)},$$
and where $\alpha_{\beta}$ is the smallest positive zero of $J_{1/\beta}(r)$. On $U_{\beta}$, let $V_{\beta}(x,r)$ denote the measure of $B(x,r)\cap U_{\beta}$ with respect to $\varphi^2_{U_{\beta}}dx$.

\begin{Lemma}
    \label{bessellemma}\begin{enumerate}[leftmargin=*]
        \item For all $r\in \mathbb{R}$ and $\beta>0$, 
        $$J_{1/\beta}(r)=\frac{1}{\Gamma(1/\beta+1/2)\sqrt{\pi}}\Big(\frac{r}{2}\Big)^{1/\beta}\int_{-1}^{1}(1-t^2)^{\frac{1}{\beta}-\frac{1}{2}}\cos(rt)dt.$$
        \item If $\alpha_{\beta}>0$ is the smallest positive zero of $J_{1/\beta}(r)$, then, as $\beta\to 0$,
        $$\alpha_{\beta}=\frac{1}{\beta}+\frac{c}{\beta^{1/3}}+O(\beta^{1/3}),$$
        where $c\approx 1.855757$ is an absolute constant. 
    \end{enumerate}
\end{Lemma}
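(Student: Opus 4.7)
For the first item, the plan is to derive the Poisson integral representation for $J_\nu$ with $\nu = 1/\beta$ directly from the series definition. Starting with
$$J_\nu(r) = \sum_{k=0}^\infty (-1)^k \frac{(r/2)^{\nu+2k}}{k!\,\Gamma(k+\nu+1)},$$
I would expand $\cos(rt) = \sum_{k=0}^\infty (-1)^k (rt)^{2k}/(2k)!$ inside the proposed integral, swap the sum and integral (justified by absolute and uniform convergence on compact sets), and evaluate each moment via the Beta integral
$$\int_{-1}^1 (1-t^2)^{\nu - 1/2} t^{2k}\,dt \;=\; B\!\left(k + \tfrac12,\nu + \tfrac12\right) \;=\; \frac{\Gamma(k+1/2)\,\Gamma(\nu + 1/2)}{\Gamma(k + \nu + 1)}$$
(using the substitution $u=t^{2}$). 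The duplication formula $\Gamma(2k+1) = 2^{2k}\Gamma(k+\tfrac12)\Gamma(k+1)/\sqrt{\pi}$ then collapses the series to exactly $J_\nu(r)$, producing the identity as stated. This part is a classical computation with no essential difficulty.

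For the second item, I would invoke the Olver uniform asymptotic expansion for the positive zeros of $J_\nu$ at large order. The classical result (see for instance Olver, or Chapter XV of Watson's \emph{A Treatise on the Theory of Bessel Functions}) asserts that, as $\nu \to \infty$, the $m$-th positive zero $j_{\nu,m}$ of $J_\nu$ satisfies
$$j_{\nu,m} \;=\; \nu \;-\; a_m \left(\tfrac{\nu}{2}\right)^{1/3} \;+\; O\!\left(\nu^{-1/3}\right),$$
where $a_m < 0$ is the $m$-th negative zero of the Airy function $\mathrm{Ai}$. In particular $a_1 \approx -2.338107$, so with $\nu = 1/\beta$ and $\alpha_\beta = j_{1/\beta,1}$,
$$\alpha_\beta \;=\; \frac{1}{\beta} \;+\; \frac{|a_1|}{2^{1/3}}\cdot\frac{1}{\beta^{1/3}} \;+\; O(\beta^{1/3}),$$
which gives the claim with $c = |a_1|/2^{1/3} \approx 2.338107/1.259921 \approx 1.855757$.

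The substantive step is the second item: the Olver expansion is obtained by rewriting $J_\nu(\nu z)$ via the Liouville--Green / steepest-descent machinery, after which the transition region near $z = 1$ is uniformly approximated by an Airy function, and the zeros of $J_\nu$ near $\nu$ are matched to those of $\mathrm{Ai}$. Reproducing this derivation in full would occupy several pages of delicate asymptotic analysis, so the intended plan is simply to quote the standard statement, verify the numerical constant, and record the conclusion. No new analysis is required beyond invoking these known results.
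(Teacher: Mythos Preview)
Your proposal is correct and matches the paper's approach: the paper's own proof simply cites page 345 of Jones for the Poisson integral representation and page 521 of Watson's treatise for the asymptotics of $\alpha_\beta$, so both items are treated as quotations of classical results. You go further by sketching the series derivation of the Poisson formula and by identifying the constant $c$ explicitly as $|a_1|/2^{1/3}$ with $a_1$ the first Airy zero, which is a welcome addition but not a different route.
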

\begin{proof}
    The expression for $J_{1/\beta}(r)$ follows from Page 345, \cite{jones}, while the asymptotics $\alpha_{\beta}$ are from Page 521, \cite{watson}.
\end{proof}

Now we explicitly construct a sequence of balls in $U_{\beta}$ violating the volume doubling condition. Consider $B_{\beta}:=\{(r,\theta)\in \mathbb{R}^2:0<r<1/(2\alpha_{\beta}),\theta\in (0,\pi \beta)\}\subseteq U_{\beta}$. It follows from Proposition \ref{besVD}, to be given below, that $V_{\beta}(2B_{\beta})/V_{\beta}(B_{\beta})\to \infty$ as $\beta\to 0$, which implies that volume doubling with respect to $\varphi_{U_{\beta}}^2 dx$ fails to uniformly hold for the domains $\{U_{\beta}\}_{\beta>0}$. 

\begin{Prop} 
    \label{besVD}
    As $\beta\to 0$,
    $$V_{\beta}\Big(0,\frac{1}{\alpha_{\beta}}\Big)\sim \frac{\beta^4}{2\pi J_{\frac{1}{\beta}+1}(\alpha_{\beta})^2}\Big(\frac{e\beta}{2}\Big)^{2/\beta}\hspace{0.1in}\text{and}\hspace{0.2in}V_{\beta}\Big(0,\frac{1}{2\alpha_{\beta}}\Big)\sim \frac{\beta^4}{8\pi J_{\frac{1}{\beta}+1}(\alpha_{\beta})^2}\Big(\frac{e\beta}{4}\Big)^{2/\beta}$$ 
\end{Prop}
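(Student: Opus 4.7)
The plan is to reduce both asymptotics to a single Bessel integral estimate, then combine the large-order asymptotic for $J_{1/\beta}$ with Stirling's formula and the asymptotic for $\alpha_\beta$ from Lemma \ref{bessellemma}(2).

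First, since the given $\varphi_{U_\beta}$ is not $L^2$-normalized, I would compute $\|\varphi_{U_\beta}\|^2_{L^2(U_\beta)}$ by separation of variables. The angular factor is $\int_0^{\pi\beta}\sin^2(\theta/\beta)\,d\theta = \pi\beta/2$, and the radial factor equals $J_{1/\beta+1}(\alpha_\beta)^2/2$ by the standard Bessel orthogonality identity $\int_0^1 J_\nu(\alpha s)^2 s\,ds = J_{\nu+1}(\alpha)^2/2$ (valid when $\alpha$ is a positive zero of $J_\nu$). Hence $\|\varphi_{U_\beta}\|^2_{L^2(U_\beta)} = \pi\beta\, J_{1/\beta+1}(\alpha_\beta)^2/4$. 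Dividing by this and substituting $s=\alpha_\beta r$ gives the uniform formula
\begin{equation*}
V_\beta(0,R) \;=\; \frac{2}{\alpha_\beta^2\, J_{1/\beta+1}(\alpha_\beta)^2}\int_0^{\alpha_\beta R} J_{1/\beta}(s)^2\, s\,ds,
\end{equation*}
which reduces both claimed asymptotics to estimating $\int_0^c J_{1/\beta}(s)^2 s\,ds$ for $c \in \{1,\,1/2\}$.

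Next, I would establish the uniform asymptotic
\begin{equation*}
J_{1/\beta}(s) \;=\; \frac{(s/2)^{1/\beta}}{\Gamma(1/\beta+1)}\bigl(1 + O(\beta s^2)\bigr) \qquad (\beta \to 0)
\end{equation*}
for $s$ in any fixed bounded set. This follows from the power series of $J_\nu$ (the $k$-th term is smaller than the $k=0$ term by a factor of order $(\beta s^2/4)^k$), or alternatively from Lemma \ref{bessellemma}(1) by noting that $(1-t^2)^{1/\beta-1/2}$ concentrates at $t=0$ with width $O(\sqrt{\beta})$, so $\cos(st) = 1 + O(\beta s^2)$ uniformly over the effective support, while $\int_{-1}^1 (1-t^2)^{1/\beta-1/2}\,dt = \sqrt{\pi}\,\Gamma(1/\beta+1/2)/\Gamma(1/\beta+1)$ gives the claimed prefactor after cancellation. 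Substituting and integrating yields
\begin{equation*}
\int_0^c J_{1/\beta}(s)^2\, s\,ds \;\sim\; \frac{c^{2/\beta+2}}{4^{1/\beta}\,\Gamma(1/\beta+1)^2\,(2/\beta+2)} \;\sim\; \frac{\beta\, c^{2/\beta+2}}{2\cdot 4^{1/\beta}\,\Gamma(1/\beta+1)^2}.
\end{equation*}
Applying Stirling in the form $\Gamma(1/\beta+1)^2 \sim (2\pi/\beta)(\beta e)^{-2/\beta}$, simplifying $(\beta e)^{2/\beta}/4^{1/\beta} = (\beta e/2)^{2/\beta}$, and using $\alpha_\beta^2 \sim \beta^{-2}$, I obtain
\begin{equation*}
V_\beta(0,\, c/\alpha_\beta) \;\sim\; \frac{\beta^4 c^2}{2\pi\, J_{1/\beta+1}(\alpha_\beta)^2}\Big(\frac{c\beta e}{2}\Big)^{2/\beta},
\end{equation*}
and specializing to $c = 1$ and $c = 1/2$ recovers the two stated asymptotics.

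The main obstacle will be making the uniformity of the Bessel approximation on $s \in [0,c]$ precise enough to retain the equivalence $\sim$ after integration against $s\,ds$. The point is that, although the relative error in the pointwise approximation is only $O(\beta s^2)$, the integrand $s^{2/\beta+1}$ concentrates sharply at $s = c$ as $\beta \to 0$, so the error contributes at most an $O(\beta)$ relative correction to the dominant term, which is $o(1)$. Once this quantitative uniformity is in hand, the remaining algebra—bookkeeping powers of $\beta$, $e$, and $2$ under the exponent $2/\beta$, and pulling $\alpha_\beta^2 \sim 1/\beta^2$ out in front—is routine.
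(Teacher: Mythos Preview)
Your proposal is correct and follows essentially the same route as the paper: both reduce to the large-order asymptotic $J_{1/\beta}(s)\sim (s/2)^{1/\beta}/\Gamma(1/\beta+1)$ for $s$ in a bounded set (the paper obtains this via the integral representation in Lemma~\ref{bessellemma}(1) together with Laplace's method, which is exactly your alternative derivation), then combine with Stirling, $\alpha_\beta\sim 1/\beta$, and the Bessel normalization $\int_0^1 J_{1/\beta}(\alpha_\beta r)^2 r\,dr=\tfrac12 J_{1/\beta+1}(\alpha_\beta)^2$. Your packaging via the parameter $c\in\{1,1/2\}$ is a mild streamlining, but the argument is the same.
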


\begin{proof}
    We calculate that as $\beta\to 0$,
\begin{align}
    & \label{bes1}\int_{0}^{\pi \beta}\int_{0}^{1/\alpha_{\beta}} J^2_{1/\beta}(\alpha_{\beta}r)\sin^2(\theta/\beta)r dr d\theta 
    \\ \nonumber & = \frac{ \pi \beta}{2} \int_{0}^{ 1/\alpha_{\beta}} J_{1/\beta}^2(\alpha_{\beta}r) r dr
    \\ \nonumber & = \frac{\beta}{2} \int_0^{1/\alpha_{\beta}} r\frac{1}{\Gamma(1/\beta+1/2)^2}\Big(\frac{\alpha_{\beta}r}{2}\Big)^{2/\beta}\Big(\int_{-1}^{1}(1-t^2)^{\frac{1}{\beta}-\frac{1}{2}}\cos(\alpha_{\beta}rt)dt\Big)^2 dr
    \\ \nonumber & = \frac{\beta}{2} \int_0^{1} \frac{r}{\alpha_{\beta}^2}\frac{1}{\Gamma(1/\beta+1/2)^2}\Big(\frac{r}{2}\Big)^{2/\beta}\Big(\int_{-1}^{1}(1-t^2)^{\frac{1}{\beta}-\frac{1}{2}}\cos(rt)dt\Big)^2 dr
    \\ \nonumber & \sim \frac{\beta^3}{4\pi }\Big(\frac{e\beta}{2}\Big)^{2/\beta}\int_0^{1} r^{1+2/\beta}\Big(\int_{-1}^{1}(1-t^2)^{\frac{1}{\beta}-\frac{1}{2}}\cos(rt)dt\Big)^2 dr.
\end{align}
In the last display, we used Stirling's formula and Lemma \ref{bessellemma}. By Laplace's method for asymptotic expansion of integrals, we have, as $\beta \to 0$,
$$\int_{-1}^{1}(1-t^2)^{\frac{1}{\beta}-\frac{1}{2}}\cos(rt)dt\sim \sqrt{\pi \beta},$$
with the convergence being uniform over all $r\in [0,1]$. It follows that 
$$V_{\beta}(0,1/\alpha_{\beta})\sim \frac{\beta^3}{4\pi}\Big(\frac{e\beta}{2}\Big)^{2/\beta}\cdot \frac{\pi\beta^2}{2}=\frac{\beta^5}{8}\Big(\frac{e\beta}{2}\Big)^{2/\beta},$$
as $\beta\to 0$. On the other hand, by Eqn. (9), Page 284, \cite{strauss},
\begin{align}
    \label{bes2} \|\varphi_{U_{\beta}}\|_{L^2(U_{\beta})}^2=\int_{0}^{\pi\beta}\int_0^{1}J_{1/\beta}^2(\alpha_{\beta}r)\sin^2(\theta/\beta)rdrd\theta=\frac{\pi\beta}{2}\cdot \frac{1}{2}J_{\frac{1}{\beta}+1}(\alpha_{\beta})^2.
\end{align}
This gives the first asymptotic in Proposition \ref{besVD} since $V_{\beta}(0,1/\alpha_{\beta})$ equals (\ref{bes1}) divided by (\ref{bes2}). The second asymptotic is by a similar calculation.    
\end{proof}

\begin{Exm}
    \normalfont
    On the $2$-sphere $\mathbb{S}^2$, we give another example of bounded domains $\{U_{\alpha}\}$ that do not satisfy ($\varphi^2_{U_{\alpha}}$-VD) uniformly over $\alpha>0$. Parametrize the sphere by spherical coordinates: $\mathbb{S}^2=\{(\theta,\psi):\theta\in [0,2\pi),\psi\in [0,\pi)\}$, and consider the wedges $U_{\alpha}=\{(\theta,\psi)\in\mathbb{S}^2:\theta\in (0,\alpha)\}$ parametrized by $\alpha>0$. Because of the exact formula
    $$\varphi_{U_{\alpha}}(\theta,\psi)=\frac{\sin(\pi \theta/\alpha)\sin(\psi)^{\pi/\alpha}}{\Big(\int_0^{\alpha}\int_0^{\pi}\sin(\frac{\pi\theta}{\alpha})^2\sin(\psi)^{2\pi/\alpha}\sin(\psi)d\psi d\theta\Big)^{1/2}},$$
    a similar construction as the circular sector in $\mathbb{R}^2$ shows the failure of volume doubling. This leads to the following conjecture.

    \begin{Conj}
        \label{conjVD}
        Any family of bounded domains $U$ (of Euclidean space or of a Riemannian manifold) cannot satisfy ($\varphi_U^2$-\textup{VD}) uniformly whenever those domains include at least one angle tending to $0$. See for example Figure \ref{VDfail} below. 
    \end{Conj}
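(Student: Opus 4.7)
The plan is to extract the mechanism behind the two explicit examples already computed — the planar sector $U_\beta$ and the spherical wedge — and turn it into a general argument driven by the universal asymptotic $\varphi\asymp r^{\pi/\alpha}\sin(\pi\theta/\alpha)$ near a corner of interior opening $\alpha$. The entire obstruction to uniform $(\varphi_U^2\text{-VD})$ comes from this polynomial vanishing rate exploding as $\alpha\to 0$. First I would fix a precise model of ``opening angle tending to zero'': a sequence $\{U_k\}$ with boundary points $x_k\in\partial U_k$ such that, at some scale $R_k>0$, there is a bi-Lipschitz chart straightening $U_k\cap B(x_k,R_k)$ to a model wedge $W_{\alpha_k}=\{(r,\theta):0<r<R_k,\,0<\theta<\alpha_k\}$ with bi-Lipschitz constants controlled uniformly and $\alpha_k\to 0$. (This is the natural quantitative version of what Figure~\ref{VDfail} is depicting.)

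Next I would establish the key local estimate
\begin{align*}
\varphi_{U_k}(y)\asymp c_k\,\rho(y)^{\pi/\alpha_k}\sin\!\bigl(\pi\vartheta(y)/\alpha_k\bigr),\qquad y\in U_k\cap B(x_k,R_k/2),
\end{align*}
where $(\rho,\vartheta)$ are the wedge coordinates in the straightening chart and $c_k=\varphi_{U_k}(y_k^\ast)/(R_k/2)^{\pi/\alpha_k}$ for a reference point $y_k^\ast$ at scale $R_k/2$ from $x_k$. The upper bound follows from building a supersolution out of the wedge harmonic $u_{\alpha_k}(r,\theta)=r^{\pi/\alpha_k}\sin(\pi\theta/\alpha_k)$, multiplying by a suitable cutoff, and using the maximum principle together with Lemma~\ref{maxeig} to dominate $\varphi_{U_k}$ on the boundary of $B(x_k,R_k/2)$; the matching lower bound comes from a subsolution built the same way. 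Sharpness of the exponent, independent of the straightening diffeomorphism, is the content of Lemma~\ref{bessellemma}(1) applied to the Bessel profile $J_{1/\beta}$ in the model cases.

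With the two-sided estimate in hand, a direct integration in polar coordinates on the wedge — following exactly the computation of Proposition~\ref{besVD} — yields, for any $r_k\ll R_k$,
\begin{align*}
\int_{B(x_k,r_k)}\varphi_{U_k}^2\,dx\;\asymp\;c_k^2\,\alpha_k\,\frac{r_k^{\,2\pi/\alpha_k+n}}{2\pi/\alpha_k+n},
\end{align*}
so that the doubling ratio
\begin{align*}
\frac{\int_{B(x_k,2r_k)}\varphi_{U_k}^2\,dx}{\int_{B(x_k,r_k)}\varphi_{U_k}^2\,dx}\;\asymp\;2^{\,2\pi/\alpha_k+n}\;\longrightarrow\;\infty
\end{align*}
as $\alpha_k\to 0$, contradicting any candidate uniform constant $D_0$. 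The analogous computation on $\mathbb{S}^2$ (with the Jacobian $\sin\psi$) gives the wedge example already mentioned and extends to Riemannian ambient spaces through normal coordinates at $x_k$.

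The main obstacle is the uniform two-sided estimate at the vertex. The boundary Harnack principle of \cite{lierllsc} recalled in Proposition~\ref{bhp} requires inner uniformity constants bounded below, but the wedge $W_{\alpha_k}$ has inner-uniformity constant $c_0\asymp\alpha_k\to 0$, so that tool degenerates in exactly the regime we care about. The remedy is to bypass the general boundary Harnack inequality and instead build the barriers $u_{\alpha_k}$ by hand, controlling the implicit constants in their comparison with $\varphi_{U_k}$ only in terms of the bi-Lipschitz constants of the straightening chart and an upper bound on $\lambda_{U_k}R_k^2$ (used as in Lemma~\ref{phi} to chain Harnack inequalities away from the corner). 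A secondary difficulty is formulating ``angle tending to $0$'' in enough generality to capture all the targets of the conjecture — fractal cusps, inward-pointing spikes, etc. — which suggests that the cleanest statement may need the hypothesis that the tangent cones at $x_k$, suitably blown up, converge in a Gromov–Hausdorff sense to a wedge of vanishing aperture.
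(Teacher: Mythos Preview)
The statement is labeled a \emph{Conjecture} in the paper, and the paper does not prove it. Immediately after stating it, the authors explain precisely why: the boundary Harnack comparison $\varphi_{U_\alpha}\asymp r^{\pi/\alpha}\sin(\pi\theta/\alpha)$ near the vertex is exactly what one would need, but Proposition~\ref{bhp} (and Theorem~5.12 of \cite{lierllsc}) requires local inner uniformity, whose constant $c_0$ is of order $\alpha$ and hence degenerates. So there is no ``paper's own proof'' to compare against; you are attempting to settle an open problem.

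Your sketch correctly identifies that the whole argument hinges on the two-sided estimate $\varphi_{U_k}(y)\asymp c_k\,\rho^{\pi/\alpha_k}\sin(\pi\vartheta/\alpha_k)$ with implied constants independent of $\alpha_k$, and you correctly flag that boundary Harnack cannot deliver this. But your proposed remedy---hand-built barriers from $u_{\alpha_k}$---does not close the gap. To run the maximum/minimum principle on the wedge $\{r<R_k/2\}$ you must compare $\varphi_{U_k}$ and $u_{\alpha_k}$ on the arc $\{r=R_k/2\}$, i.e.\ you must already know that $\varphi_{U_k}(R_k/2,\theta)\asymp \varphi_{U_k}(R_k/2,\alpha_k/2)\sin(\pi\theta/\alpha_k)$ uniformly in $\theta\in(0,\alpha_k)$. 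Near $\theta=0$ and $\theta=\alpha_k$ this is again a boundary Harnack statement on a region of width $\asymp R_k\alpha_k$, so you have pushed the same uncontrolled constant from the vertex to the arc rather than eliminated it. Lemma~\ref{phi} (interior Harnack) cannot help here because the difficulty is at the lateral boundary, not in the interior. Moreover, once you pass through a merely bi-Lipschitz chart, $u_{\alpha_k}$ is no longer harmonic for the pulled-back operator, and constructing sub/supersolutions with the exact exponent $\pi/\alpha_k$ and with multiplicative constants that do not blow up faster than $2^{\pi/\alpha_k}$ is a genuine analytic problem that your outline does not address. In short, the ``main obstacle'' you name is real and your remedy is circular; this is why the paper leaves the statement as a conjecture.
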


    To give more evidence supporting Conjecture \ref{conjVD}, consider domains $U_{\alpha}\subseteq \mathbb{R}^2$ such as the left picture in Figure \ref{VDfail}. The boundary Harnack principle (Theorem 5.12, \cite{lierllsc}), implies that locally near the vertex of an angle,  $\varphi_{U_{\alpha}}$ is comparable to the harmonic function $h_{\alpha}(r,\theta)=r^{\pi/\alpha}\sin(\pi\theta/\alpha)$. The fact that $\varphi_{U_{\alpha}}$is normalized so that $\|\varphi_{U_{\alpha}}\|_{L^2(U_{\alpha})}=1$ does not matter, since ($\varphi^2_{U_{\alpha}}$-VD) is invariant under multiplying $\varphi_{U_{\alpha}}$ by a constant. It is easily checked that near the vertex of a cone, $U_{\alpha}$ fails to be $h_{\alpha}^2$-volume doubling. This observation does not, however, imply the failure of ($\varphi_{U_{\alpha}}^2$-VD). Indeed, Theorem 5.12, \cite{lierllsc} requires local inner uniformity, but any family of domains with an angle $\alpha\to 0$ is not even locally inner uniform.    

    \begin{figure}[H]
    
  \centering
  \includegraphics[width=0.40\textwidth]{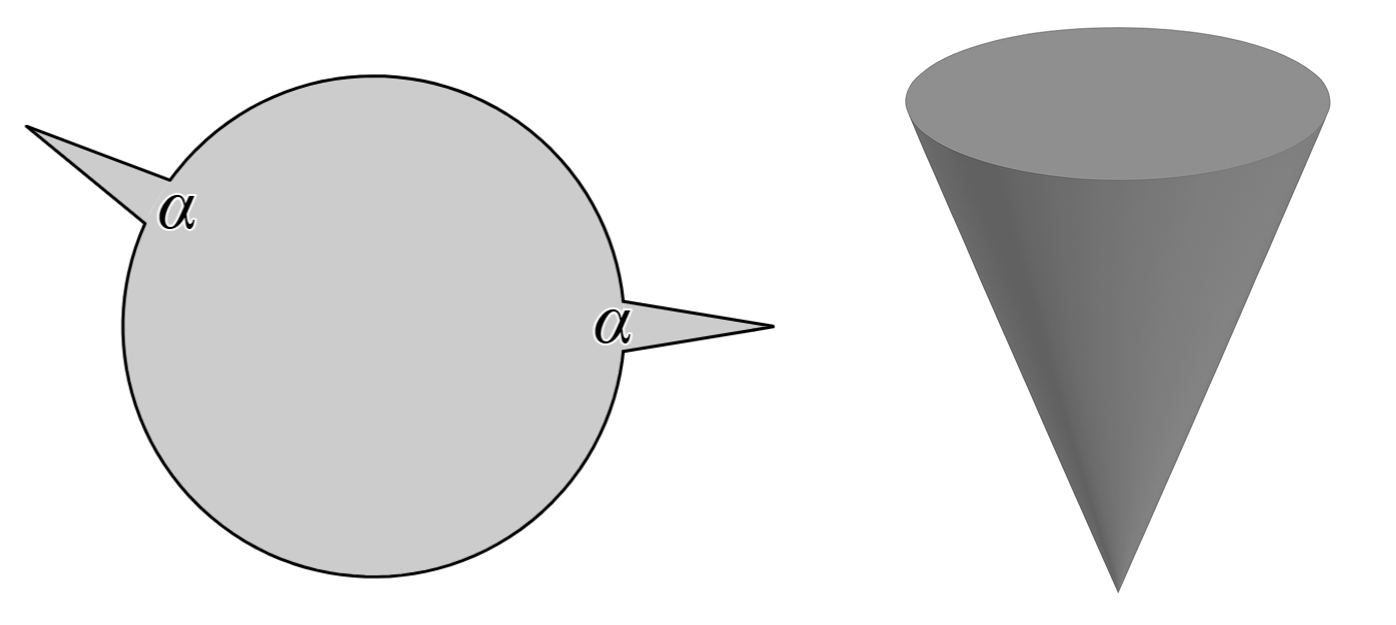}
  \caption{Examples of domains $U_{\alpha}$ parametrized by an angle $\alpha>0$ which conjecturally fail to be $\varphi_{U_{\alpha}}^2$-volume doubling uniformly as $\alpha\to 0$. Depicted on the right is a bounded cone with opening angle $\alpha$, more explicitly $U_{\alpha}=\{x^2+y^2<\tan(\alpha/2)^2z^2,0<z<1\}\subseteq \mathbb{R}^3$.}
  \label{VDfail}
\end{figure}
\end{Exm}

\textbf{Acknowledgements.} The first author (BC) would like to thank Christian Remling for his insightful responses to questions raised by BC on the MathOverflow website, which inspired Lemma \ref{noweight}. BC is in part supported by NSF Grant DGE–2139899. LSC is supported by NSF Grant DMS-2343868.

\author{
  \noindent 
  Brian Chao
  \\ Department of Mathematics, Cornell University, Ithaca, NY 14853, USA.
  \\ E-mail: \texttt{bc492@cornell.edu}
}
\\~\\
\author{
  \noindent 
  Laurent Saloff-Coste
  \\ Department of Mathematics, Cornell University, Ithaca, NY 14853, USA.
  \\ E-mail: \texttt{lsc@math.cornell.edu}
}

\end{document}